\pgfplotsset{compat=1.18}
\newcolumntype{P}[1]{>{\centering\arraybackslash}p{#1}}
\newcommand{\nobracket}{}
\newcommand{\bs}[1]{\ensuremath{\boldsymbol{#1}}}
\newcommand{\tmop}[1]{\ensuremath{\operatorname{#1}}}
\renewenvironment{proof}{\noindent\textbf{Proof\ }}{\hspace*{\fill}$\Box$\medskip}
\newcommand{\ifcomment}{\iffalse}
\newcommand{\ssf}[1]{{\mbox{\sffamily #1}}}
\newcommand{\jumpb}[1]{[\![#1]\!]}
\newcommand{\avg  }[1]{\langle \, #1 \, \rangle}
\newcommand{\avgb}[1]{\{\!\!\{ #1 \}\!\!\}}    
\newcommand{\ti}[1]{\tilde{#1}}
\newcommand{\G}{\Gamma}
\newcommand{\Om}{\Omega}
\newcommand{\om}{\omega}
\newcommand{\cT}{{\mathcal T}}
\newcommand{\tO}{\tilde{\Omega}_h}
\newcommand{\tG}{\tilde{\Gamma}_h}
\newcommand{\tGD}{\tilde{\Gamma}_{h;D}}
\newcommand{\tGN}{\tilde{\Gamma}_{h;N}}
\newcommand{\GD}{\Gamma_{D;h}}
\newcommand{\GN}{\Gamma_{N;h}}
\newcommand{\tx}{{\tilde{\bs{x}}}}
\newcommand{\oS}{\ssf{S}_{\bs{d}}}
\newcommand{\oSh}{\ssf{S}_{1/2\bs{d}}}
\newcommand{\oE}{\ssf{E}}
\newtheorem{thm}{Theorem}
\newtheorem{lemma}{Lemma}
\newdefinition{rem}{Remark}
\newtheorem{assumption}{Assumption}
\newcolumntype{M}[1]{>{\centering\arraybackslash}m{#1}}
\newcommand{\ReverseLogLogSlopeTriangle}[6]
{
	
	\pgfplotsextra
	{
		\pgfkeysgetvalue{/pgfplots/xmin}{\xmin}
		\pgfkeysgetvalue{/pgfplots/xmax}{\xmax}
		\pgfkeysgetvalue{/pgfplots/ymin}{\ymin}
		\pgfkeysgetvalue{/pgfplots/ymax}{\ymax}
		
		\pgfmathsetmacro{\xArel}{#1}
		\pgfmathsetmacro{\yArel}{#3}
		\pgfmathsetmacro{\xBrel}{#1-#2}
		\pgfmathsetmacro{\yBrel}{\yArel}
		\pgfmathsetmacro{\xCrel}{\xBrel}
		
		\pgfmathsetmacro{\lnxB}{\xmin*(1-(#1-#2))+\xmax*(#1-#2)} 
		\pgfmathsetmacro{\lnxA}{\xmin*(1-#1)+\xmax*#1} 
		\pgfmathsetmacro{\lnyA}{\ymin*(1-#3)+\ymax*#3} 
		\pgfmathsetmacro{\lnyC}{\lnyA-#4*(\lnxA-\lnxB)}
		\pgfmathsetmacro{\yCrel}{\lnyC-\ymin)/(\ymax-\ymin)} 
		
		\coordinate (A) at (rel axis cs:\xArel,\yArel);
		\coordinate (B) at (rel axis cs:\xBrel,\yBrel);
		\coordinate (C) at (rel axis cs:\xCrel,\yCrel);
		
		\draw[#5,line width = #6 mm]     (A)-- node[pos=0.5,anchor=south] {1}
		(B)-- node[pos=0.5, anchor = east] {#4}	
		(C)-- 
		cycle;
	}
}
\newcommand{\ReverseLogLogSlopeTriangleN}[6]
{
	
	\pgfplotsextra
	{
		\pgfkeysgetvalue{/pgfplots/xmin}{\xmin}
		\pgfkeysgetvalue{/pgfplots/xmax}{\xmax}
		\pgfkeysgetvalue{/pgfplots/ymin}{\ymin}
		\pgfkeysgetvalue{/pgfplots/ymax}{\ymax}
		
		\pgfmathsetmacro{\xArel}{#1}
		\pgfmathsetmacro{\yArel}{#3}
		\pgfmathsetmacro{\xBrel}{#1-#2}
		\pgfmathsetmacro{\yBrel}{\yArel}
		\pgfmathsetmacro{\xCrel}{\xBrel}
		
		\pgfmathsetmacro{\lnxB}{\xmin*(1-(#1-#2))+\xmax*(#1-#2)} 
		\pgfmathsetmacro{\lnxA}{\xmin*(1-#1)+\xmax*#1} 
		\pgfmathsetmacro{\lnyA}{\ymin*(1-#3)+\ymax*#3} 
		\pgfmathsetmacro{\lnyC}{\lnyA-#4*(\lnxA-\lnxB)}
		\pgfmathsetmacro{\yCrel}{\lnyC-\ymin)/(\ymax-\ymin)} 
		
		\coordinate (A) at (rel axis cs:\xArel,\yArel);
		\coordinate (B) at (rel axis cs:\xBrel,\yBrel);
		\coordinate (C) at (rel axis cs:\xCrel,\yCrel);
		
		\draw[#5,line width = #6 mm]     (A)-- node[pos=0.5,anchor=north] {1}
		(B)-- node[pos=0.5, anchor = east] {#4}	
		(C)-- 
		cycle;
	}
}
\def\ps@pprintTitle{%
   \let\@oddhead\@empty
   \let\@evenhead\@empty
   \let\@oddfoot\@empty
   \let\@evenfoot\@oddfoot
}
\begin{document}
	
\begin{frontmatter}

  \title{Gap-SBM: A New Conceptualization of the Shifted Boundary Method \\
  with Optimal Convergence for the Neumann and Dirichlet Problems
  }

	\author[duke]{J. Haydel Collins}
	\ead{jhc63@duke.edu}
      	\author[PSU]{Kangan Li}
	\ead{kbl5610@psu.edu}
	\author[franchecomte]{Alexei Lozinski}
	\ead{alexei.lozinski@univ-fcomte.fr}
	\author[duke]{Guglielmo Scovazzi}
	\ead{guglielmo.scovazzi@duke.edu}
	\address[duke]{Department of Civil and Environmental Engineering, Duke University, Durham, North Carolina 27708, USA}
	\address[franchecomte]{Universit\' e Marie et Louis Pasteur, CNRS, LmB (UMR 6623), F-25000 Besan\c con, France}
      	\address[PSU]{Energy and Mineral Engineering Department, The Pennsylvania State University, University Park, PA, 16802}


	\begin{abstract} 
	We propose and mathematically analyze a new Shifted Boundary Method for the treatment of Dirichlet and Neumann boundary conditions, with provable optimal accuracy in the $L^2$- and $H^1$-norms of the error. 
	The proposed method is built on three stages. 
	First, the distance map between the SBM surrogate boundary and the true boundary is used to construct an approximation to the geometry of the gap between the two. 
	Then, the representations of the numerical solution and test functions are extended from the surrogate domain to the such gap. Finally, approximate quadrature formulas and specific shift operators are applied to integrate a variational formulation that also involves the fields extended in the gap.
	An extensive set of two-dimensional tests demonstrates the theoretical findings and the overall optimal performance of the proposed method.
	\end{abstract}

	\begin{keyword}
	Shifted Boundary Method; Immersed Boundary Method; small cut-cell problem; approximate domain boundaries; Neumann boundary conditions; unfitted finite element methods.
	\end{keyword}
	\end{frontmatter}

\section{Introduction \label{sec:intro} }

This article explores a reimagined approach for the treatment of Dirichlet and Neumann boundary conditions with the Shifted Boundary Method. Although this approach applies to both, the major impetus for this topic stemmed from the pursuit of optimal convergence rates for Neumann conditions in a primal formulation. What follows is a general overview of motivations, historical background, and key ideas that will be explored in depth throughout the present article.

Proper enforcement of Neumann boundary conditions is critical across a wide range of engineering and applied science applications, particularly in computational solid mechanics and heat transfer, where traction and flux boundary conditions are ubiquitous. The analyses performed in these areas rely on computational representations of increasingly complex geometry to the extent that traditional conforming Finite Element Methods may no longer be sufficiently practical. These limitations are further exacerbated in situations where meshing must be performed iteratively, as in geometrically demanding scenarios like shape optimization, digital twins, and additive manufacturing.

Addressing the demand for less labor-intensive design algorithms, recent advancements have been made in immersed (or embedded, or unfitted) computational methods. In the last two decades, much attention has been given on reducing the design cost for problems involving complex geometrical features, described in standard formats (i.e., CAD) and non-standard formats (i.e., STL, level sets, etc.). Indeed, immersed/embedded/unfitted methods have shown the potential to drastically reduce the pre-processing time involved in the acquisition of the geometry and the generation of the computational grid. An (incomplete) list of these developments in the context of finite element methods include the Immersed Boundary Finite Element Method (IB-FEM)~\cite{boffi2003finite,zhang2004immersed}, the cutFEM~\cite{badia2018aggregated,hansbo2002unfitted,hollig2003finite,hollig2001weighted,ruberg2012subdivision,ruberg2014fixed,schott2015face,burman2018cut,burman2019dirichlet,burman2017cut,burman2010fictitious,burman2012fictitious,burman2014unfitted,burman2018shape,massing2015nitsche,burman2015cutfem,kamensky2017immersogeometric,xu2016tetrahedral,lozinski2019nocut}, the Finite Cell Method~\cite{parvizian2007finite,duster2008finite,hollig2003finite,hollig2001weighted}, Immerso-Geometric Analysis~\cite{kamensky2017immersogeometric}, B-spline immersed methods~\cite{ruberg2012subdivision,ruberg2014fixed} and similar earlier methods. Many of these approaches require the geometric construction of the partial elements cut by the embedded boundary (cut-cells) to form the solution space, and typically employ Nitsche's method for consistent weak boundary enforcement.

CutFEM relies on data structures that are considerably more tedious to implement with respect to corresponding fitted finite element methods. Furthermore, integrating the variational forms on the characteristically irregular cut cells may also be difficult and advanced quadrature formulas might need to be employed~\cite{parvizian2007finite,duster2008finite}. Additionally, small-cut cells can induce poor matrix conditioning and even numerical instabilities, which need to be addressed with appropriate stabilization operators~\cite{burman2010ghost,burman2014fictitious} or element aggregation techniques~\cite{badia2018aggregated}.

The Shifted Boundary Method was proposed as an alternative unfitted method that remedies the so called ``small cut cell problem'' by removing cut cells entirely. Instead, the location where boundary conditions are applied is \textit{shifted} from the true to an approximate (surrogate) boundary composed of facets belonging to fully intact cells (the surrogate domain). This shift can be thought of as a modification/correction of the boundary conditions by way of Taylor expansions in the direction of the closest distance between corresponding points on the true and surrogate boundaries. These shifted conditions are enforced weakly, using Nitsche's method, leading to a relatively simple, robust, accurate, and efficient algorithm. Indeed, the computational infrastructure based on distances instead of cut cells can be less algorithmically burdensome in regards to implementation.

The Shifted Boundary Method (SBM) was introduced in~\cite{main2018shifted0} and belongs to the more specific class of approximate domain methods~\cite{bramble1972projection,bramble1996finite,bramble1994robust,cockburn2012solving,cockburn2014priori,cockburn2014solving,bertoluzza2005fat,bertoluzza2011analysis,glowinski1994fictitious,lozinski2016new}, along with $\phi$-FEM~\cite{duprez2020phi,duprez2022immersed,cotin2022varphi,duprez2023phi,duprez2023phi2,duprez2023new} albeit with some key differences. The work in Main et al.~\cite{main2018shifted0} demonstrated the viability of the SBM for Poisson and Stokes flow problems. Soon after, the method was generalized in~\cite{main2018shifted} to the advection-diffusion and Navier-Stokes equations, and later to hyperbolic conservation laws in~\cite{song2018shifted}. An analysis of the stability and accuracy of the SBM for the Poisson, advection-diffusion, and Stokes operators was also included in~\cite{main2018shifted0,main2018shifted,atallah2020analysis},respectively.  A high-order version of the SBM was proposed in~\cite{atallah2022high}, applications to solid and fracture mechanics problems were presented in~\cite{liu2020shift,atallah2021shifted,li2021shifted,li2023blended,li2021shiftedsimple} and simulations of static and moving interfaces were developed  in~\cite{li2020shifted,colomes2021weighted}. Most recently, the SBM was extended to contact problems in solid mechanics in Li et al.~\cite{li2025contact}

Until now, Dirichlet and Neumann conditions were handled differently due to the certain challenges posed by the limitations of the Taylor expansions. That is to say, the solution and its gradient are available within piecewise-linear interpolation spaces (P1), however, the higher-order terms in the Taylor expansion are not. This means that Neumann boundary conditions, if shifted naively, will result in a loss of one order of convergence in $L^2$. Earlier work in Atallah et al.~\cite{atallah2021solid} addressed this challenge via a mixed shifted formulation strategy (e.g. solving for strains and displacements in solid mechanics). Although mixed formulations increase computational cost, it was shown that it is only necessary to solve the mixed variation along the strip of elements adjacent to the Neumann boundary while enforcing continuity of the normal stress component between the primal and mixed domains. This approach successfully recovered second order accuracy for Neumann Boundary conditions with only a meager increase in computational burden. 

Naturally, the potential for a SBM that accurately enforces Neumann boundary conditions without a mixed formulation remained an open research question. This current work demonstrates the viability of a newly conceptualized SBM that optimally enforces both Neumann and Dirichlet boundary conditions, which requires no mixed formulation strategy, and retains the classic Taylor expansion shift operator paradigm. 

The key idea in the proposed SBM variant, named here Gap-SBM, is to construct a geometric approximation to the gap between the surrogate and true boundaries, and then devise approximate quadrature formulas to integrate a modified variational formulation, which includes the extension of the solution and test functions from the surrogate domain to the approximate true boundary.

Although the integration of the variational form in the gap between surrogate and true boundaries is a prerogative of cutFEM approaches, the proposed method remains conceptually an SBM, because the construction of the approximation to the gap geometry is done via distance vectors/maps, and the numerical integration is performed on the surrogate boundary using special quadratures that do not involve any cut cells. Because these geometric constructions and integration formulas do not involve cut cells, the method is inherently of SBM type.
Specifically, the proposed method needs only the finite element infrastructure that already exists on the original background mesh: no additional degrees of freedom, cut-cells, or ghost penalization are needed.

In the following derivations, analysis, and numerical experiments, the proposed method is shown to be stable and have optimal error convergence rates (in the $L^2$- and $H^1$-norm of the error). We emphasize that, to the best of our knowledge, it is for the first time in the literature that optimal convergence in the $L^2$ norm can be proven theoretically for a method acting on unfitted grids without using advanced quadrature formulas on cut cells. Indeed, previous analyses of different SBM and $\phi$-FEM variants had to contend with a half-order sub-optimality, which was never observed in practice, but was persistent theoretically. Here, the fully optimal convergence is proven, at least for the symmetric variant of the Gap-SBM.

The rest of this article is organized as follows: Section~\ref{sec:sbm_intro} introduces the SBM notation, Section~\ref{sec:newSBM} derives the new proposed SBM for the Poisson problem with Dirichlet and Neumann conditions, Section~\ref{sec:theo} derives stability results and error estimates in the $H^1$- and $L^2$-norms, Section~\ref{sec:sbm_linela} extends the proposed method to the equations of compressible isotropic linear elasticity, and Section~\ref{sec:numerical_results} demonstrates the optimal convergence of the error in a series of numerical tests.

\section{Preliminaries on the Shifted Boundary Method \label{sec:sbm_intro}}
This section introduces the notation and general strategy of the Shifted Boundary Method (SBM). In Section~\ref{sec:newSBM}, we will describe the specific details of the new SBM pursued in this work. 
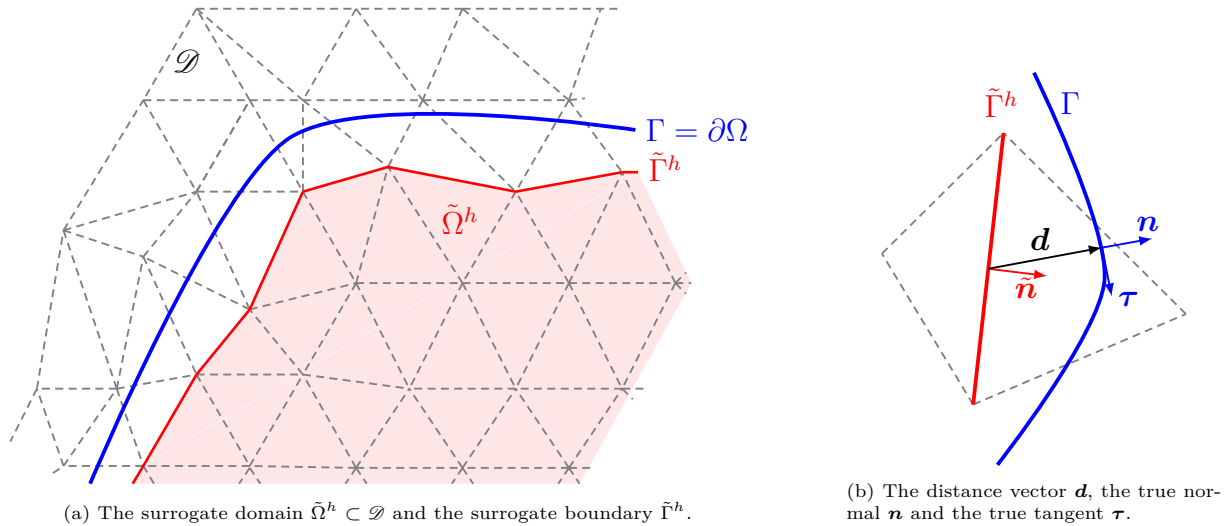
\begin{figure}
\centering
\begin{subfigure}[b]{.6\textwidth}\centering
\begin{tikzpicture}[scale=0.7]

\draw [black, draw=none,name path=surr] plot coordinates {  (7.3,2.1) (5,1.73205) (2.6,2.2) (1,1.73205) (0,-0.5) (-1,-1.73205) (-2,-3.4641) };
\draw [black, draw=none,name path=extra] plot coordinates {(7.3,2.1) (8.3,0.0) (6.2,-3.8) (-2.2, -3.8) (-2,-3.4641)};
\tikzfillbetween[of = surr and extra,split=false]{red!10!};

\draw[line width = 0.25mm,densely dashed,gray] (4,5.19615) -- (7,5.19615);
\draw[line width = 0.25mm,densely dashed,gray] (4,5.19615) -- (6,3.4641);
\draw[line width = 0.25mm,densely dashed,gray] (4,5.19615) -- (3.25,3.4641);
\draw[line width = 0.25mm,densely dashed,gray] (4,5.19615) -- (2,5.19615);
\draw[line width = 0.25mm,densely dashed,gray] (2,5.19615) -- (3.25,3.4641);
\draw[line width = 0.25mm,densely dashed,gray] (2,5.19615) -- (1,3.4641);
\draw[line width = 0.25mm,densely dashed,gray] (-1,5.19615) -- (2,5.19615) ;
\draw[line width = 0.25mm,densely dashed,gray] (-1,5.19615) -- (1,3.4641) ;
\draw[line width = 0.25mm,densely dashed,gray] (-1,5.19615) -- (-2,3.4641) ;
\draw[line width = 0.25mm,densely dashed,gray] (0,3.4641) -- (-1,5.19615);
\draw[line width = 0.25mm,densely dashed,gray] (-1,1.73205) -- (0,3.4641);
\draw[line width = 0.25mm,densely dashed,gray] (1,1.73205) -- (2,0);
\draw[line width = 0.25mm,densely dashed,gray] (6,3.4641) -- (6.6,5.2);

\draw[line width = 0.25mm,densely dashed,gray] (0,-0.5) -- (2,0);
\draw[line width = 0.25mm,densely dashed,gray] (2,0) -- (2.6,2.2);
\draw[line width = 0.25mm,densely dashed,gray] (2,0) -- (4,0);
\draw[line width = 0.25mm,densely dashed,gray] (4,0) -- (2.6,2.2);
\draw[line width = 0.25mm,densely dashed,gray] (5,1.73205) -- (4,0);
\draw[line width = 0.25mm,densely dashed,gray] (4,0) -- (6,0);
\draw[line width = 0.25mm,densely dashed,gray] (6,0) -- (5,1.73205);
\draw[line width = 0.25mm,densely dashed,gray] (6,0) -- (7,2.1);
\draw[line width = 0.25mm,densely dashed,gray] (6,0) -- (8,0);
\draw[line width = 0.25mm,densely dashed,gray] (8,0) -- (7,2.1);
\draw[line width = 0.25mm,densely dashed,gray]  (-3.5,1) --  (-2,3.4641);
\draw[line width = 0.25mm,densely dashed,gray]  (-2,3.4641) --  (0,3.4641);
\draw[line width = 0.25mm,densely dashed,gray]  (-2,3.4641) --  (-1,1.73205);

\draw[line width = 0.25mm,densely dashed,gray] (-1,-1.73205) -- (1,-1.73205);
\draw[line width = 0.25mm,densely dashed,gray] (2,0) -- (1,-1.73205);
\draw[line width = 0.25mm,densely dashed,gray] (1,-1.73205) -- (0,-0.5);
\draw[line width = 0.25mm,densely dashed,gray]  (-4,-2) -- (-3.5,1);
\draw[line width = 0.25mm,densely dashed,gray]  (-4,-2) -- (-4.5,-3);
\draw[line width = 0.25mm,densely dashed,gray]  (-3.5,1) --  (-2.5,-2);
\draw[line width = 0.25mm,densely dashed,gray]  (-3.5,1) --  (-2,0.5);
\draw[line width = 0.25mm,densely dashed,gray]  (-3.5,1) --  (-1,1.73205);

\draw[line width = 0.25mm,densely dashed,gray] (-2.5,-2) -- (-4,-2);
\draw[line width = 0.25mm,densely dashed,gray] (-3.5,-3.4641) -- (-4,-2);
\draw[line width = 0.25mm,densely dashed,gray] (0,-3.4641) -- (-2,-3.4641);
\draw[line width = 0.25mm,densely dashed,gray]  (-1,-1.73205) -- (0,-3.4641);
\draw[line width = 0.25mm,densely dashed,gray] (0,-3.4641) -- (1,-1.73205);

\draw[line width = 0.25mm,densely dashed,gray]  (8,0) -- (7,-2);
\draw[line width = 0.25mm,densely dashed,gray]  (6,0) -- (7,-2);
\draw[line width = 0.25mm,densely dashed,gray]  (6,0) -- (5,-2);
\draw[line width = 0.25mm,densely dashed,gray]  (4,0) -- (5,-2);
\draw[line width = 0.25mm,densely dashed,gray]  (4,0) -- (3,-2);
\draw[line width = 0.25mm,densely dashed,gray]  (2,0) -- (3,-2);
\draw[line width = 0.25mm,densely dashed,gray] (1,-1.73205) -- (3,-2);
\draw[line width = 0.25mm,densely dashed,gray] (7,-2) -- (5,-2);
\draw[line width = 0.25mm,densely dashed,gray] (5,-2) -- (3,-2);
\draw[line width = 0.25mm,densely dashed,gray]  (6,-3.5) -- (7,-2);
\draw[line width = 0.25mm,densely dashed,gray]  (6,-3.5) -- (5,-2);
\draw[line width = 0.25mm,densely dashed,gray]  (4,-3.5) -- (5,-2);
\draw[line width = 0.25mm,densely dashed,gray]  (4,-3.5) -- (3,-2);
\draw[line width = 0.25mm,densely dashed,gray]  (2,-3.5) -- (3,-2);
\draw[line width = 0.25mm,densely dashed,gray]  (2,-3.5) -- (1,-1.73205);
\draw[line width = 0.25mm,densely dashed,gray]  (2,-3.5) -- (0,-3.4641);
\draw[line width = 0.25mm,densely dashed,gray]  (2,-3.5) -- (4,-3.5);
\draw[line width = 0.25mm,densely dashed,gray]  (6,-3.5) -- (4,-3.5);

\draw[line width = 0.25mm,densely dashed,gray]  (0,-3.5) -- (0.2,-3.8);
\draw[line width = 0.25mm,densely dashed,gray]  (0,-3.5) -- (-0.2,-3.8);
\draw[line width = 0.25mm,densely dashed,gray]  (2,-3.5) -- (2.2,-3.8);
\draw[line width = 0.25mm,densely dashed,gray]  (2,-3.5) -- (1.8,-3.8);
\draw[line width = 0.25mm,densely dashed,gray]  (4,-3.5) -- (4.2,-3.8);
\draw[line width = 0.25mm,densely dashed,gray]  (4,-3.5) -- (3.8,-3.8);
\draw[line width = 0.25mm,densely dashed,gray]  (6,-3.5) -- (6.2,-3.8);
\draw[line width = 0.25mm,densely dashed,gray]  (6,-3.5) -- (5.8,-3.8);
\draw[line width = 0.25mm,densely dashed,gray]  (6,-3.5) -- (6.4,-3.5);

\draw[line width = 0.25mm,densely dashed,gray] (7,-2) -- (7.4,-2.0);
\draw[line width = 0.25mm,densely dashed,gray] (7,-2) -- (7.2,-2.2);
\draw[line width = 0.25mm,densely dashed,gray] (8,0) -- (8.3,0.0);
\draw[line width = 0.25mm,densely dashed,gray] (8,0) -- (8.2,0.2);
\draw[line width = 0.25mm,densely dashed,gray] (8,0) -- (8.2,-0.2);
\draw[line width = 0.25mm,densely dashed,gray] (-1.8, -3.8) -- (-2,-3.4641);

\draw[line width = 0.25mm,densely dashed,gray] (-3.7, -3.6) -- (-3.5,-3.4641);
\draw[line width = 0.25mm,densely dashed,gray] (-3.5,-3.4641) -- (-2.5,-2);
\draw[line width = 0.25mm,densely dashed,gray ] (-2.5,-2) -- (-2,0.5);
\draw[line width = 0.25mm,densely dashed,gray] (-2,0.5) -- (-1,1.73205);
\draw[line width = 0.25mm,densely dashed,gray] (-1,1.73205) -- (0,3.4641);
\draw[line width = 0.25mm,densely dashed,gray] (0,3.4641) -- (6,3.4641);
\draw[line width = 0.25mm,densely dashed,gray] (6,3.4641) -- (6.3,3.5);

\draw[line width = 0.25mm,densely dashed,gray] (0,3.4641) -- (1,1.73205);
\draw[line width = 0.25mm,densely dashed,gray] (1,1.73205) -- (1,3.4641);
\draw[line width = 0.25mm,densely dashed,gray] (1,3.4641) -- (2.6,2.2);
\draw[line width = 0.25mm,densely dashed,gray] (3.25,3.4641) -- (2.6,2.2);
\draw[line width = 0.25mm,densely dashed,gray] (3.25,3.4641) -- (5,1.73205);
\draw[line width = 0.25mm,densely dashed,gray] (6,3.4641) -- (5,1.73205);
\draw[line width = 0.25mm,densely dashed,gray] (6,3.4641) -- (7,2.1);
\draw[line width = 0.25mm,densely dashed,gray] (0,-0.5) -- (-2,0.5);
\draw[line width = 0.25mm,densely dashed,gray] (-2,0.5) -- (-1,-1.73205);
\draw[line width = 0.25mm,densely dashed,gray] (-2.5,-2) -- (-1,-1.73205);
\draw[line width = 0.25mm,densely dashed,gray] (-2.5,-2) -- (-2,-3.4641);
\draw[line width = 0.25mm,densely dashed,gray] (0,-0.5) -- (-1,1.73205);
\draw[line width = 0.25mm,densely dashed,gray] (-1,1.73205) -- (1,1.73205);
\draw[line width = 0.25mm,densely dashed,gray] (-2,-3.4641) -- (-3.5,-3.4641);

\draw [line width = 0.5mm,blue, name path=true] plot[smooth] coordinates {(-3.00,-3.8) (0.75,2.75) (7.25,2.9)};

\draw[line width = 0.35mm,red] (7,2.1) -- (7.3,2.1);
\draw[line width = 0.35mm,red] (5,1.73205) --  (7,2.1);
\draw[line width = 0.35mm,red] (1,1.73205) -- (2.6,2.2);
\draw[line width = 0.35mm,red] (2.6,2.2) -- (5,1.73205);
\draw[line width = 0.35mm,red] (1,1.73205) -- (0,-0.5);
\draw[line width = 0.35mm,red] (0,-0.5) -- (-1,-1.73205);
\draw[line width = 0.35mm,red] (-1,-1.73205) -- (-2,-3.4641);
\draw[line width = 0.35mm,red] (-2.2, -3.8) -- (-2,-3.4641);

\node[text width=0.5cm] at (7.85,2.20) {\large${\color{red}\ti{\G}^{h}}$};
\node[text width=1.2cm] at (8.3,2.9) {\large${\color{blue}\G=\partial \Om}$};
\node[text width=3cm] at (5.75,1.20) {\large${\color{red}\ti{\Om}^h}$};
\node[text width=3cm] at (0.7,4.2) {\large$\mathcal{D}$};
\end{tikzpicture}
\caption{The surrogate domain $\ti{\Om}^h \subset {\cal D}$ and the surrogate boundary $\ti{\G}^h$.}
\label{fig:SBM}
\end{subfigure}
\hspace{1cm}
\begin{subfigure}[b]{.3\textwidth}\centering
	\begin{tikzpicture}[scale=0.8]
\draw[line width = 0.25mm,densely dashed,gray] (0,0.5) -- (-1.5,3);
\draw[line width = 0.25mm,densely dashed,gray] (-1.5,3) -- (0.5,5);
\draw[line width = 0.25mm,densely dashed,gray] (0,0.5) -- (3.5,2);
\draw[line width = 0.25mm,densely dashed,gray] (3.5,2) -- (0.5,5);

\draw [line width = 0.5mm,blue, name path=true] plot[smooth] coordinates {(0.4,-0.5) (2.16,2.55) (1.0,6)};
\draw[line width = 0.5mm,red] (0,0.5) -- (0.5,5);
\node[text width=0.5cm] at (0.5,5.5) {\large${\color{red}\ti{\G}^h}$};
\node[text width=0.5cm] at (1.75,5.5) {\large${\color{blue}\G}$};
\node[text width=0.5cm] at (1.25,3.25) {\large$\bs{d}$};
\node[text width=0.5cm] at (3,3.5) {\color{blue} \large$\bs{n}$};
\node[text width=0.5cm] at (1.0,2.43) {\color{red} \large$\ti{\bs{n}}$};
\node[text width=0.5cm] at (2.7,2.25) {\color{blue} \large$\bs{\tau}$};
\draw[->,red, line width = 0.25mm,-latex] (0.25,2.75) -- (1.22,2.63);
\draw[->,line width = 0.25mm,-latex] (0.25,2.75) -- (2.12,3.1);
\draw[->, blue, line width = 0.25mm,-latex] (2.12,3.1) -- (2.28,2.29);
\draw[->, blue, line width = 0.25mm,-latex] (2.12,3.1) -- (2.95,3.25);
\end{tikzpicture}
    \caption{The distance vector $\bs{d}$, the true normal $\bs{n}$ and the true tangent $\bs{\tau}$.}
    \label{fig:ntd}
\end{subfigure}
\caption{The surrogate domain, its boundary, and the distance vector $\bs{d}$.}
\label{fig:surrogates}
\end{figure}

\subsection{Surrogate domains and boundaries}
\label{sec:surr_dom_bndry}

Let $\Om$ be a connected open set in $\mathbb{R}^{2}$ with Lipschitz boundary $\G = \partial \Om$ and let $\bs{n}$ be the outer-pointing normal to $\G$. We consider a closed domain ${\cal D}$ such that $\text{clos}(\Om) \subseteq {\cal D}$ and we introduce a family $\cT_h$ of admissible and shape-regular, quasi-uniform tessellations (i.e., grids, or meshes) of ${\cal D}$.
We will indicate by $h_T$ the size of element $T \in \cT_h$ and by $h$ the piecewise constant function such that $h_{|T}=h_T$.
In the numerical experiments, we will consider tessellations that are either triangular or Cartesian.
For triangular grids, shape-regularity is intended in the sense of Ciarlet.
For Cartesian grids, ${\cal D}$ is uniformly discretized with square elements of side $h$.
The numerical analysis will be restricted to triangular grids to avoid the complex notation and proofs for the general setting. In this context, the quasi-uniformity hypothesis is reframed, with a slight abuse of notation, setting the function $h$ to be globally constant.
\begin{rem}
The assumption of quasi-uniformity is not essential for the numerical analysis of the proposed methods, but it greatly simplifies the notation in the mathematical proofs.
\end{rem}
\begin{rem}
In this work we limit the discussion to two dimensions, but analogous strategies can be applied in the three-dimensional case. We leave this extension to future, more applied work.
\end{rem}

As shown in Figure~\ref{fig:surrogates}, the SBM is based on restricting the tessellation where the discrete variational formulation is applied to those elements that are {\sl strictly} contained in $\text{clos}(\Om)$, i.e., we form
$$
\ti{\cT}_h := \{ T \in \cT_h : T \subset \text{clos}(\Om) \} \, ,
$$ 
which identifies the {\sl surrogate domain}
$$
\tO := \text{int} \left(\bigcup_{T \in \ti{\cT}_h}  T \right) \subseteq \Om \,,
$$
with {\sl surrogate boundary} $\tG:=\partial \tO$ and outward-oriented unit normal vector $\ti{\bs{n}}$ to $\tG$. 
Obviously, $\ti{\cT}_h$ is an admissible and shape-regular tessellation of $\tO$ (see Figure~\ref{fig:SBM}).
In other words, all cut elements are removed from the active computational domain, which is now $\tO$ instead of $\Om$.
We now introduce a mapping
\begin{subequations}\label{eq:defMmap}
	\begin{align}
	\bs{M}_{h}:&\; \tG \to \G \; ,  \\
	&\; \ti{\bs{x}} \mapsto \bs{x}   \; ,
	\end{align}
\end{subequations}
which associates to any point $\ti{\bs{x}} \in \tG$ on the surrogate boundary a point $\bs{x} = \bs{M}_{h}(\ti{\bs{x}})$ on the physical boundary $\Gamma$.  Whenever uniquely defined, the closest-point projection of $\ti{\bs{x}}$ upon $\Gamma$ is a natural choice for $\bs{x}$, as shown e.g. in Figure~\ref{fig:ntd}. 
Through $\bs{M}_{h}$, a distance vector function $\bs{d}_{\bs{M}_{h}}$ can be defined as
\begin{align}
\label{eq:Mmap}
\bs{d}_{\bs{M}_{h}} (\ti{\bs{x}})
\, = \, 
\bs{x}-\ti{\bs{x}}
\, = \, 
[ \, \bs{M}_{h}-\bs{I} \, ] (\ti{\bs{x}})
\; .
\end{align}
For the sake of simplicity, we set $\bs{d} = \bs{d}_{\bs{M}_{h}} $ where $\bs{d} = \|\bs{d}\| \bs{\nu}$  and $\bs{\nu}$ is a unit vector. 
\begin{rem}
	If $\bs{x} = \bs{M}_{h}(\ti{\bs{x}})$ does not belong to corners or edges, then the closest-point projection implies $\bs{\nu}=\bs{n}$, where $\bs{n}$ has been defined as the outward pointing normal to $\G$.
\end{rem}
\begin{rem}
There are strategies for the definition of the map $\bs{M}_{h}$ and distance $\bs{d}$ other than the closest-point projection, such as level sets, for which $\bs{d}$ is defined by means of a distance function.
Other more sophisticated choices of $\bs{M}_{h}$ may be locally preferable and we refer to \cite{atallah2021analysis} for more details.
\end{rem} 
In case the boundary $\G$ is partitioned into a Dirichlet boundary $\G_{D}$ and a Neumann boundary $\G_{N}$ with $\G = \overline{\G_{D} \cup \G_{N}}$ and $\G_{D} \cap \G_{N} = \emptyset$, we need to identify whether a surrogate edge $\ti{e} \subset \tG$ is associated with $\G_{D}$ or $\G_{N}$. To that end, we partition $\tG$ as $\overline{\tGD \cup \tGN}$ with $\tGD \cap \tGN = \emptyset$ using again a map $\bs{M}_{h}$, such that
\begin{align}\label{def:tGD}
\tGD = \{ \ti{e} \subseteq \tG : \bs{M}_{h}(\ti{e}) \, \subseteq \, \G_{D} \}
\end{align}
and 
\begin{align}\label{def:tGN}
\tGN = \{ \ti{e} \subseteq \tG : \bs{M}_{h}(\ti{e}) \, \subseteq \, \G_{N} \} \; .
\end{align}
We will also assume that $\tGN = \tG \setminus \tGD$, that is, that either a surrogate edge $\ti{e}$ entirely belongs to the surrogate Dirichlet boundary or to the surrogate Neumann boundary. We will then prevent the case of mixed Dirichlet/Neumann surrogate edges. This hypothesis is realized in practice, by renouncing to the map $\bs{M}_{h}$ be the closest-point projection, as discussed in more detail in~\cite{atallah2021analysis}.

\subsection{General notation for inner products, norms, and seminorms}
Throughout this article, we denote by $L^{2}(\om)$, for $\om \subset \Om$, the space of Lebesgue square-integrable functions on $\Om$.
 We will use the Sobolev spaces $H^m(\om)=W^{m,2}(\om)$ of index of regularity $m \geq 0$ and index of summability 2, equipped with the (scaled) norm
\begin{equation}
\|v \|_{H^{m}(\om)} 
= \left( \| \, v \, \|^2_{L^2(\om)} + \sum_{k = 1}^{m} \| \, l(\om)^k \ssf{D}^k v \, \|^2_{L^2(\om)} \right)^{1/2} \; ,
\end{equation}
where $\ssf{D}^{k}$ is the $k$th-order spatial derivative operator and $l(A)=\mathrm{meas}_{2}(A)^{1/2}$ is a characteristic length of the domain $A$.
Note that $H^0(\om)=L^{2}(\om)$ and, as usual, we use a simplified notation for norms and semi-norms, i.e., we set $\| \, v  \, \|_{m,\Om}=\|\, v \, \|_{H^m(\om)}$ and $| \, v \, |_{k,\om}= 
\| \,\ssf{D}^k v \,\|_{0,\Om}= \| \,\ssf{D}^k v \, \|_{L^2(\om)}$.

We also introduce the definition of the $L^{2}$-inner product over $\om$, namely $( \, u \, , \, v  \, )_{\om} = \int_{\om} u \, v$, and an analogous inner product on the subset $\gamma \subset \partial \Om$, namely $\avg{ u \, , \, w }_{\gamma} = \int_{\gamma} u \, w$.
We can also restrict to $\om$ and $\gamma$ the norms and seminorms initially defined on $\Omega$ and $\Gamma$, that is $\| \cdot \|_{\om,k}$, $| \cdot |_{\om,k}$ and $\| \cdot \|_{\gamma,0}$, for example.

\subsection{General strategy of the standard Shifted Boundary Method} 
Consider now the Poisson problem with Dirichlet and Neumann boundary conditions:
\begin{subequations}
\label{eq:strong}
\begin{align}
- \Delta u 
&=\; 
f \, , \qquad \mbox{in } \Om \; , \\
 u 
&=\; 
u_D \, , \ \quad \mbox{on } \G_D \; , \\ 
\nabla u \cdot \bs{n} 
&=\; 
h_N \, , \ \quad \mbox{on } \G_N \; ,
\end{align}
\end{subequations}
where $\G = \overline{\G_D \cup \G_N}$ and $\G_D \cap \G_N=\emptyset$.
Note also that vectors and tensors are marked in bold, while scalars are marked with regular fonts.

As already mentioned, the SBM discretizes the governing equations in $\tO$ rather than in $\Om$, with the challenge of accurately imposing boundary conditions on $\tG$.
To this end, boundary conditions are {\it shifted} from $\G$ to $\tG$, by performing an $m$th-order Taylor expansion of the variable of interest at the surrogate boundary, under the assumption that a solution variable $u$ is sufficiently smooth in the strip between $\tG$  and $\G$.
Let $\ssf{D}^{i}_{\bs{d}}$ denote the $i$th-order directional derivative along $\bs{d}$:
$$
\ssf{D}^{i}_{\bs{d}} u = \displaystyle{\sum_{\bs{\alpha} \in \mathbb{N}^n, |\bs{\alpha}|=i} \frac{i!}{\bs{\alpha}!}   \frac{\partial^i u}{\partial \bs{x}^{\bs{\alpha}}} \bs{d}^{\bs{\alpha}}  } \; . 
$$
Then, for $\ti{\bs{x}} \in \tG$ and $\bs{x} = \bs{M}_{h}(\ti{\bs{x}})$ and we can write
\begin{align}
u(\bs{x}) = u(\ti{\bs{x}}+\bs{d}(\ti{\bs{x}}))=u(\tx) +  \sum_{i = 1}^{m}  \frac{\ssf{D}^{i}_{\bs{d}} \, u(\ti{\bs{x}})}{i!} + (\ssf{R}^{m}(u,\bs{d}))(\tx)\,, 
\end{align}
where the remainder $\ssf{R}^{m}(u,\bs{d})$ satisfies  $|\ssf{R}^{m}(u,\bs{d})| = o(\Vert \bs{d}\Vert^m)$ as $\Vert \bs{d}\Vert \to 0$.
Assume that the Dirichlet condition $u(\bs{x})=u_D(\bs{x})$ needs to be imposed on the true boundary $\G_D$.
Using the map $\bs{M}_h$, one can extend $u_D$ from $\G_D$ to $\tGD$ as $\oE{u}_D(\tx)=u_D(\bs{M}_h (\tx))$. Then, the Taylor expansion can be used to enforce the Dirichlet condition on $\tGD$ rather than $\G_D$, as 
\begin{equation}\label{eq:trace-u}
\oS^{m} u  - \oE{u}_D + \ssf{R}^{m}(u,\bs{d}) = 0 
\; , \qquad \mbox{on } \tGD \; ,
\end{equation}
where we have introduced the boundary {\it shift} operator for every $\ti{\bs{x}} \in \tGD$, namely:
\begin{equation}\label{eq:def-bndS}
\oS^{m} u(\ti{\bs{x}}) := u(\ti{\bs{x}}) + \sum_{i = 1}^{m}  \frac{\ssf{D}^{i}_{\bs{d}} \, u (\ti{\bs{x}})}{i!}  \; .
\end{equation}
Neglecting the remainder $\ssf{R}^{m}(u,\bs{d})$, we obtain the final expression of the {\it shifted} approximation of order $m$ of the boundary condition
\begin{equation}
\oS^{m} u \approx \oE{u}_D  \, , \quad  \mbox{on } \tGD \; .
\end{equation}
This shifted boundary condition will be enforced weakly in what follows, and whenever there is no source of confusion, the symbol $\oE$ will be removed from the extended quantities, and we would write $u_D$ in place of $\oE{u}_D$.
\begin{equation}\label{eq:Finalu-g}
\oS u \approx u_D  \, , \quad  \mbox{on } \tGD \; ,
\end{equation}
with 
\begin{equation}
\oS := \oS^{1} u(\ti{\bs{x}}) = u(\ti{\bs{x}}) + \nabla u(\ti{\bs{x}}) \cdot \bs{d}(\ti{\bs{x}}) \; .
\label{eq:oS_taylor}
\end{equation}
To develop a simple SBM variational formulation, assume a triangular grid and consider the space
\begin{align}
V_h(\tO)  = \; \left\{ v_h \in C^0(\tO)  \ | \ {v_h}_{|T} \in \mathcal{P}^1(T)  \, , \, \forall T \in \ti{\mathcal{T}}_h \right\}  \, ,
\end{align}
where $\mathcal{P}^1(T)$ is the the space of linear polynomials over the triangle $T \in \ti{\mathcal{T}}_h$.
The penalty-free SBM formulation inspired by the work in~\cite{collins2023penalty} reads:
\begin{quote}
Find $u_h \in V_h(\tO)$ such that, $\forall w_h \in V^k_h(\tO)$
\begin{multline}
\label{eq:SB_Poisson_uns}
( \, \nabla u_h \, , \, \nabla w_h  \, )_{\tO}
- \avg{ \nabla u_h  \cdot \ti{\bs{n}} \, , \, w_h }_{\tG}
- \avg{ \bs{n} \cdot \ti{\bs{n}} \; (h_N - \nabla u_h  \cdot \bs{n}) \, , \, w_h }_{\tGN}
\\
+  \avg{ \oS u_h -u_D\, , \, \nabla w_h \cdot \ti{\bs{n}} }_{\tGD}
-( \, f \, , \, w_h \, )_{\tO}
\;=\;0
\; .
\end{multline}
\end{quote}
It relies on Nitsche's method to enforce Dirichlet boundary conditions, in which the shift operator $\oS$ (i.e., a Taylor expansion in our case) is used to extend the discrete solution $u_h \in V_h(\tO)$ from $\tGD$ to $\G_D$. Neumann conditions are imposed weakly assuming a constant extrapolation of the gradient $\nabla u_h$.
 This choice is due to the fact that, in the case of piecewise-linear approximation, it is not possible to construct a Taylor expansion of $\nabla u_h$, since the Hessian of $u_h$ would vanish. It was found in numerical computations that this limitation causes the SBM to have suboptimal convergence (i.e., first- instead of second-order convergence) in the $L^2$-norm of the error, while the $H^1$-seminorm of the error converges optimally (first-order).
Adapting the discrete approximation spaces, similar derivations and conclusions can be obtained in the case of bi-linear elements over Cartesian grids.

Atallah et al.~\cite{atallah2021solid} addressed the sub-optimality of the SBM with Neumann boundary conditions via a mixed shifted formulation (e.g. solving for strains and displacements in solid mechanics).
In the next section, we will develop an alternative approach based on a primal formulation.

%
%
\section{A new conceptualization of the Shifted Boundary Method
\label{sec:newSBM}}

To cure the loss of optimality in the convergence of the $L^2$-norm, we propose to account for the effect on the solution $u_h$ of the gap $\Om \setminus \tO$ between $\tG$ and $\G$ (the gray-shaded region in Figure~\ref{fig:SBM}). Of course, we will {\sl avoid explicitly integrating} the discrete equations over the gap region but, rather, we will derive approximate quadrature formulas that do not require integration over cut elements.
\begin{figure}[tb]
	\centering
      		\begin{tikzpicture}[scale=1.4]
        \draw [black, draw=none,name path=surr] plot coordinates {
                    (0,-0.5)
                    (1,1.73205)
                    (2.6,2.2)
                  };
		\draw [blue, name path=true] plot[smooth] coordinates {
                    (1,-1)
                    (2.0,0.4)
                    (2.7,0.6)};

            \fill[fill=Green!15!]  (1,1.73205) -- (2,0.4) -- (2.7,0.6) -- (2.6,2.2)  -- (1,3.4641) -- (1,1.73205);
            \fill[fill=orange!15!] (1,1.73205) -- (2,0.4) -- (1,-1)    -- (0,-0.5)   -- (-1,1.73205) -- (1,1.73205);
            \draw[line width = 0.25mm,Green] (2,0.4) -- (2.7,0.6);
            \draw[line width = 0.25mm,orange] (2,0.4) -- (1,-1);

		\draw[line width = 0.25mm,densely dashed,gray] (-1,1.73205) -- (0,3.4641);
		\draw[line width = 0.25mm,densely dashed,gray] (0,3.4641) -- (1,1.73205);
		\draw[line width = 0.25mm,densely dashed,gray] (1,1.73205) -- (1,3.4641);
		\draw[line width = 0.25mm,densely dashed,gray] (1,3.4641) -- (0,3.4641);
		\draw[line width = 0.25mm,densely dashed,gray] (1,3.4641) -- (2.6,2.2);
		\draw[line width = 0.25mm,densely dashed,gray] (1,3.4641) -- (3.25,3.4641);
		\draw[line width = 0.25mm,densely dashed,gray] (3.25,3.4641) -- (2.6,2.2);
		\draw[line width = 0.25mm,densely dashed,gray] (0,-0.5) -- (-2,0.5);
		\draw[line width = 0.25mm,densely dashed,gray] (-2,0.5) -- (-1,1.73205);
		\draw[line width = 0.25mm,densely dashed,gray] (-1,1.73205) -- (1,1.73205);
		\draw[line width = 0.25mm,densely dashed,gray] (0,-0.5) -- (-1,1.73205);
		\draw [line width = 0.5mm,blue, name path=true] plot[smooth] coordinates {(0.7,-1.5) (2.0,0.4) (3.75,0.65)};

        \draw[line width = 0.5mm,red] (2.6,2.2) -- (3.5,2.6);
		\draw[line width = 0.5mm,red] (1,1.73205) -- (2.6,2.2);
		\draw[line width = 0.5mm,red] (1,1.73205) -- (0,-0.5);
		\draw[line width = 0.5mm,red] (0,-0.5) -- (-1,-0.7);

	    \draw[->,line width = 0.5mm,-latex] (1,1.73205) -- (2,0.4);
	 	\draw[->,line width = 0.5mm,-latex] (2.6,2.2) -- (2.7,0.6);
	 	\draw[->,line width = 0.5mm,-latex] (0,-0.5) -- (1,-1);

		\node[text width=0.5cm] at (3.3,2.15) {${\color{red}\tG}$};
		\node[text width=3cm] at (1.25,2.25) {${\color{red}\tO}$};
		\node[text width=0.5cm] at (4.1,0.65) {${\color{blue}\G}$};
            \node[text width=0.5cm] at (0.5,0.7) {\color{red} $\ti{e}$};
            \node[text width=0.5cm] at (0.1,-0.7) {$\ti{\bs{a}}_{1}$};
            \node[text width=0.5cm] at (1.35,1.635) {$\ti{\bs{a}}_2$};
            \node[text width=0.5cm] at (1.2,-1.05) {$\bs{a}_{1}^{\tmop{ext}}$};
            \node[text width=0.5cm] at (2.2,0.30) {$\bs{a}_2^{\tmop{ext}}$};
            \node[text width=0.5cm] at (0.6,-0.6) {${e_{\ti{\bs{a}}_1}}$};
            \node[text width=0.5cm] at (1.3,1.0) {${e_{\ti{\bs{a}}_2}}$};
            \node[text width=0.05cm] at (0.8,0.2) {${T^{\tmop{ext}}_1}$};
            \node[text width=0.05cm] at (1.5,-0.4) {\color{red} ${{e^{\tmop{ext}}}}$};
            \node[text width=1.5cm] at (0.1,1.4) {\color{red} $\ti{T}_1 \subset \tO$};
            \node[text width=0.05cm] at (1.8,1.2) {$T^{\tmop{ext}}_2$};
            \node[text width=1.5cm] at (1.8,2.35) {\color{red} $\ti{T}_2 \subset \tO$};
            \node[text width=0.5cm] at (3.0,1.5) {$\bs{d}$};
		\end{tikzpicture}
		\caption{The extension of the shape functions from the surrogate domain (e.g., elements $\ti{T}_1$ and $\ti{T}_2$) to the gap $\Om \setminus \tO$ (e.g., the element extensions $T^{\tmop{ext}}_1$ and $T^{\tmop{ext}}_2$). $T^{\tmop{ext}}_1$ has the edge ${e^{\tmop{ext}}}$ that interpolates $\G$, and similarly for $T^{\tmop{ext}}_2$.  
		}
			\label{fig:SBM2}
\end{figure}
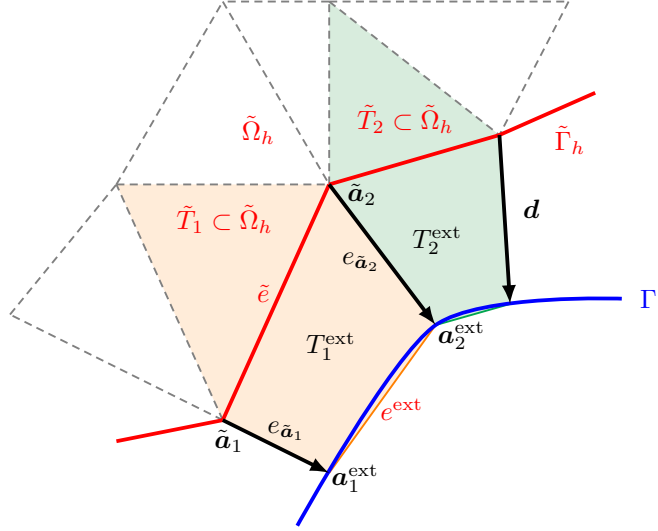

To this end, we define the extension $V^{\tmop{ext}}_h(\Om)$ of $V_h(\tO)$, where a function $v_h^{\tmop{ext}} \in V^{\tmop{ext}}_h(\Om)$ is obtained as the linear combination of the extensions to the gap $\Om \setminus \tO$ of the piecewise-linear basis functions used to represent $v_h$ in $\tO$. In other words, referring to the sketch of Figure~\ref{fig:SBM2} for the a two-dimensional triangular grid, each shape function that is non-zero over the elements attached to edges in $\tG$ is evaluated over a point in the gap, then the linear combination forming $v_h$ is taken and renamed $v_h^{\tmop{ext}}$. For all other elements in the discretization we have instead that $v_h^{\tmop{ext}}=v_h$, and no explicit extension is needed.

The gap $\Om \setminus \tO$ is discretized as follows: for every element with an edge $\ti{e}$ on $\tG$ (e.g., the element $\ti{T}_1$ in Figure~\ref{fig:SBM2}), we consider the two end nodes of that edge (e.g., $\ti{\bs{a}}_{1}$ and $\ti{\bs{a}}_{2}$), and project them via $\bs{M}_{h}$ onto $\G$ (to $\bs{a}_{1}^{\tmop{ext}}=\bs{M}_{h}(\ti{\bs{a}}_{1})$ and $\bs{a}_{2}^{\tmop{ext}}=\bs{M}_{h}(\ti{\bs{a}}_{2})$), to obtain then a projection of $\ti{e}$, called $e^{\tmop{ext}}$, which interpolates $\G$ between $\bs{a}_{1}^{\tmop{ext}}$ and $\bs{a}_{2}^{\tmop{ext}}$.
We define the union of the edges $e^{\tmop{ext}}$ as $\G_h$, the interpolant of $\G$, which can be further decomposed into a Dirichlet part $\GD$ and Neumann part $\GN$.
The quadrilateral that connects $\ti{\bs{a}}_{1}$, $\ti{\bs{a}}_{2}$, $\bs{a}_{1}^{\tmop{ext}}$, and $\bs{a}_{2}^{\tmop{ext}}$ is the element extension (${T^{\tmop{ext}}_1}$ in Figure~\ref{fig:SBM2}) to $\Om \setminus \tO$ of the original element ($\ti{T}_1$).
Let us denote by $\ti{\mathcal{T}}_h^{\tmop{ext}}$ the set of quadrilaterals constructed to discretize the gap $\Om \setminus \tO$ by this procedure.

\begin{rem}
\label{rem:edge_ext}
In the geometric construction presented here, the boundary $\Om$ is approximated as polygonal, that is $\Om \approx \Om^h$, where $\Om^h$ is a polygonal domain with its vertices lying on the boundary of $\Om$.
$\Om^h$ introduces a geometric error, which is however quadratic in nature, and this approximation can be made safely in the context of piecewise-linear finite element approximation spaces. Also observe that one can avoid this approximation by using the distance $\bs{d}$ along the entire edge $\ti{e}$, so that the edge $e^{\tmop{ext}}$ would be curved. We prefer to avoid these complications for the sake of simplicity.
In the case of higher-order discretizations, the previous argument can be adjusted by computing distances at each node along the edges in $\tG$ (including nodes internal to the edges), and constructing with such distances a higher-order approximation $\Om^h$ of $\Om$.
\end{rem}

Based on the discussion in Remark~\ref{rem:edge_ext}, we will always use interchangeably $\Om$ and $\Om^h$ in what follows.
The previous geometric construction allows us to extend the shape functions defined over $\ti{T}_1$ to ${T^{\tmop{ext}}_1}$. This is in a nutshell the construction of a function $v_h^{\tmop{ext}} \in V^{\tmop{ext}}_h(\Om)$.
In what follows, we will only consider the two-dimensional setting, but analogous derivations can be extended to the three-dimensional case.

Observe that the dimension of the function spaces $V^{\tmop{ext}}_h(\Om)$ and $V_h(\tO)$ are the same, since no additional degrees of freedom are added in the extension process.

We want now to derive a weak form that implements a SBM discretization of the strong form~\eqref{eq:strong}. For a sufficiently regular solution $u$ of the infinite dimensional problem, the strong form~\eqref{eq:strong} is multiplied by $w_h^{\tmop{ext}} \in V^{\tmop{ext}}_h(\Om)$:
\begin{align}
\label{eq:weak1}
- ( w_h^{\tmop{ext}} \, , \, \Delta u )_{\Om}
= 
( w_h^{\tmop{ext}} \, , \, f )_{\Om}
\; .
\end{align}

Let us introduce the following notation, for the sake of brevity: 
\begin{subequations}
\begin{align}
( \cdot \, , \, \cdot )_{\ti{\mathcal{T}}_h} 
&=\; 
\sum_{\ti{T} \in \ti{\mathcal{T}}_h} ( \cdot \, , \, \cdot )_{\ti{T}}
\; ,
\\
( \cdot \, , \, \cdot )_{\ti{\mathcal{T}}_h^{\tmop{ext}}} 
&=\;
\sum_{T^{\tmop{ext}} \in \ti{\mathcal{T}}_h^{\tmop{ext}}} ( \cdot \, , \, \cdot )_{T^{\tmop{ext}}}
\; ,
\\
\avg{ \cdot \, , \, \cdot }_{\partial \ti{\mathcal{T}}_h } 
&=\;
\sum_{\ti{T} \in \ti{\mathcal{T}}_h} \avg{ \cdot \, , \, \cdot }_{\partial \ti{T}} 
\; ,
\\
\avg{ \cdot \, , \, \cdot }_{\partial \ti{\mathcal{T}}_h^{\tmop{ext}} } 
&=\;
\sum_{T^{\tmop{ext}} \in \ti{\mathcal{T}}_h^{\tmop{ext}}} \avg{ \cdot \, , \, \cdot }_{\partial T^{\tmop{ext}}} 
\; .
\end{align}
\end{subequations}
Then integrating by parts equation~\eqref{eq:weak1} over all the triangles in $\ti{\mathcal{T}}_h$ and the quadrilaterals in $\ti{\mathcal{T}}_h^{\tmop{ext}}$,
 we have
%
%
\begin{multline}
\label{eq:SBM_e3}
( \nabla w_h  \, , \,  \nabla u )_{\ti{\mathcal{T}}_h}
\, - \,
\avg{ w_h \, , \, \nabla u  \cdot \bs{n}_T}_{\partial \ti{\mathcal{T}}_h} 
\, + \, 
( \nabla w_h^{\tmop{ext}}  \, , \,  \nabla u )_{\ti{\mathcal{T}}_h^{\tmop{ext}}} 
\, - \,
\avg{ w_h^{\tmop{ext}} \, , \, \nabla u  \cdot \bs{n}_{T^{\tmop{ext}}} }_{\partial \ti{\mathcal{T}}_h^{\tmop{ext}}} 
\\
\; = \;
( w_h \, , \, f )_{\ti{\mathcal{T}}_h}
\, + \, 
( w_h^{\tmop{ext}} \, , \, f )_{\ti{\mathcal{T}}_h^{\tmop{ext}}} 
\, ,
\end{multline}
where we have used the fact that for a given $w_h \in V_h(\tO)$, the extension $w_h^{\tmop{ext}} \in V^{\tmop{ext}}_h(\Om)$ coincides with $w_h$ over $\tO$.
\begin{rem}
\label{rem:jump_wh}
While $w_h \in V_h(\tO)$ is continuous over $\tO$, the extension $w_h^{\tmop{ext}} \in V^{\tmop{ext}}_h(\Om)$ can be discontinuous over the gap $\Om \setminus \tO$. Looking at Figure~\ref{fig:SBM2}, and in particular to the edge ${e_{\ti{\bs{a}}_2}}$ emanating from $\ti{\bs{a}}_2$ along the distance vector $\bs{d}(\ti{\bs{a}}_2)$, it is clear that the extended shape functions from the elements $\ti{T}_1$ and $\ti{T}_2$ may not match on ${e_{\ti{\bs{a}}_2}}$. This happens because the gradient over ${T^{\tmop{ext}}_1}$ is the same as the one over $\ti{T}_1$ and the gradient over ${T^{\tmop{ext}}_2}$ is the same as the one over $\ti{T}_2$, but the gradients over $\ti{T}_1$ and $\ti{T}_2$ in general do not match.
\end{rem}

Because of the presence of potential discontinuities, the framework of Discontinuous Galerkin Methods seems the most appropriate to proceed.
The contribution from the internal element boundaries in equation~\eqref{eq:SBM_e3} can be expanded by making use of the following definitions and identities of the jumps and averages of edge quantities:
\begin{align}
\jumpb{w} &=\; w^{+}\bs{n}^{+}+w^{-}\bs{n}^{-} \; , \label{eq:jump} \\
\jumpb{\bs{v}} &=\; \bs{v}^{+}\cdot \bs{n}^{+}+ \bs{v}^{-}\cdot \bs{n}^{-} \; , \label{eq:avg} \\
\avgb{w} &=\; 
\frac{1}{2}(w^{+}+w^{-}) \; , \\
\jumpb{v \, \bs{w}} &=\; 
\avgb{v} \jumpb{\bs{w}}+\jumpb{v} \cdot \avgb{\bs{w}} \; .
\end{align}
In what follows, to simplify the notation, we will write $w_h$ in place of $w_h^{\tmop{ext}} \in V^{\tmop{ext}}_h(\Om)$, since the regions of integration uniquely define whether we are considering a test/trial function on $\tO$ or its extension over $\Om \setminus \tO$.
Hence, denoting $\ti{\mathcal{E}}^o$ the set of interior faces in $\tO$ and $\mathcal{E}^{\tmop{ext};o}$ the set of faces of elements in $\ti{\mathcal{T}}_h^{\tmop{ext}}$ that do not lie on $\G$ nor $\tG$, we have
\begin{align}
\avg{ w_h \, , \, \nabla u  \cdot \bs{n}}_{\partial \ti{\mathcal{T}}_h} 
\, + \, 
\avg{ w_h \, , \, \nabla u  \cdot \bs{n}}_{\partial \ti{\mathcal{T}}_h^{\tmop{ext}}} 
=&\; 
\avg{ 1 \, , \, \jumpb{w_h \nabla u} }_{\ti{\mathcal{E}}^o \cup \tG \cup \mathcal{E}^{\tmop{ext};o} }
\nonumber \\
\phantom{=}&\; 
\; + \;
\avg{ w_h \, , \,  \underbrace{\nabla u \cdot \bs{n}}_{=h_N}  }_{\GN}
\; + \;
\avg{ w_h \, , \,  \nabla u \cdot \bs{n}  }_{\GD}
\nonumber \\
=&\; 
\avg{ 1 \, , \, 
\avgb{w_h} \jumpb{\nabla u} + \jumpb{w_h}\cdot\avgb{\nabla u} }_{\ti{\mathcal{E}}^o \cup \tG \cup \mathcal{E}^{\tmop{ext};o} }
\nonumber \\
\phantom{=} &\; 
\; + \;
\avg{ w_h \, , \, h_N  }_{\GN}
\; + \;
\avg{ w_h \, , \,  \nabla u \cdot \bs{n}  }_{\GD}
\nonumber \\[.2cm]
=&\; 
\avg{ \jumpb{w_h} \, , \, \avgb{\nabla u} }_{\mathcal{E}^{\tmop{ext};o} }
\; + \;
\avg{ w_h \, , \, h_N  }_{\GN}
\nonumber \\
\phantom{=} &\; 
\; + \;
\avg{ w_h \, , \,  \nabla u \cdot \bs{n}  }_{\GD}
\; , 
\label{WrongEquality}
\end{align}
where we have used the fact that $\jumpb{\nabla u}=0$ on edges in $\ti{\mathcal{E}}^o \cup \tG \cup \mathcal{E}^{\tmop{ext};o}$ and $\jumpb{w_h}=0$ on edges in $\ti{\mathcal{E}}^o \cup \tG$.
Observing that $( \nabla w_h  \, , \,  \nabla u )_{\ti{\mathcal{T}}_h} = ( \nabla w_h  \, , \,  \nabla u )_{\tO}$ and $( \nabla w_h  \, , \,  f )_{\ti{\mathcal{T}}_h} = ( \nabla w_h  \, , \,  f )_{\tO}$, we have
\begin{multline}
( \nabla w_h  \, , \,  \nabla u )_{\tO}
\, + \, 
( \nabla w_h  \, , \,  \nabla u )_{\ti{\mathcal{T}}_h^{\tmop{ext}}} 
\, - \,
\avg{ \jumpb{w_h} \, , \, \avgb{\nabla u} }_{\mathcal{E}^{\tmop{ext};o}}
\; - \;
\avg{ w_h \, , \, h_N  }_{\GN}
\\
\; - \;
\avg{ w_h \, , \,  \nabla u \cdot \bs{n}  }_{\GD}
\; = \;
( w_h \, , \, f )_{\tO}
\, + \, 
( w_h \, , \, f )_{\ti{\mathcal{T}}_h^{\tmop{ext}}} 
\; .
\label{eq:SBM_var_gen00}
\end{multline}
Let us now replace the infinite dimensional exact solution $u$ with its approximation $u_h^{\tmop{ext}} \in V^{\tmop{ext}}_h(\Om)$ and simplify the notation with $u_h$ in place of $u_h^{\tmop{ext}} \in V^{\tmop{ext}}_h(\Om)$, as we did before for $w_h^{\tmop{ext}}$.
Complementing the previous equation with terms that weakly enforce the Dirichlet condition and the continuity of the solution across edges in $\mathcal{E}^{\tmop{ext};o}$, we finally obtain the discrete weak formulation:
\begin{multline}
( \nabla w_h  \, , \,  \nabla u_h )_{\tO}
\, + \, 
( \nabla w_h  \, , \,  \nabla u_h )_{\ti{\mathcal{T}}_h^{\tmop{ext}}} 
\; - \;
\avg{ w_h \, , \, h_N  }_{\GN}
\\
\, - \,
\avg{ \jumpb{w_h} \, , \, \avgb{\nabla u_h} }_{\mathcal{E}^{\tmop{ext};o} }
\; - \;
\avg{ w_h \, , \,  \nabla u_h \cdot \bs{n}  }_{\GD}
\, - \; \theta \;
\avg{ \avgb{\nabla w_h} \, , \, \jumpb{u_h} }_{\mathcal{E}^{\tmop{ext};o} }
\; - \; \theta \;
\avg{ \nabla  w_h \cdot \bs{n} \, , \,  u_h-u_D  }_{\GD}
\\
\; + \;
\langle \gamma \, h^{-1} \jumpb{w_h} \,, \, \jumpb{u_h} \rangle_{\mathcal{E}^{\tmop{ext};o}}
\; + \;
\langle \gamma \, h^{-1} w_h \,, \, u_h-u_D \rangle_{\GD}
\; = \;
( w_h \, , \, f )_{\tO}
\, + \, 
( w_h \, , \, f )_{\ti{\mathcal{T}}_h^{\tmop{ext}}} 
\; ,
\label{eq:SBM_var_gen}
\end{multline}
where Remark~\ref{rem:jump_wh} is in order also for $u_h$. The terms
$$-\theta \avg{ \avgb{\nabla w_h} \, , \, \jumpb{u_h} }_{\mathcal{E}^{\tmop{ext};o} } - \theta \avg{ \nabla  w_h \cdot \bs{n} \, , \,  u_h-u_D  }_{\GD} + \langle \gamma \, h^{-1} \jumpb{w_h} \,, \, \jumpb{u_h} \rangle_{\GD} + \langle \gamma \, h^{-1} w_h \,, \, u_h \rangle_{\GD}$$
are typical of an interior penalty discontinuous Galerkin discretization. 
In particular, for $\theta=1$ and $\gamma>0$ we obtain a symmetric interior penalty Galerkin discretization, while for $\theta=-1$ and $\gamma=0$ we obtain the skew-symmetric (or non-symmetric) interior penalty Galerkin discretization, which has the advantage of being penalty-free. 
An SBM version of the latter had been recently explored in~\cite{collins2023penalty}. 

Now, rather than computing integrals on the gap between $\tG$ and $\G_h$, or on $\GD$ and $\GN$, we propose an approximation to these integrals using information on the $\tG$ and appropriate rescaling of integrals.
Let us start approximating the term:
\begin{align}
( \nabla w_h  \, , \,  \nabla u_h )_{\ti{\mathcal{T}}_h^{\tmop{ext}}} 
&\approx\;
\sum_{\ti{e} \subset \tG}
\avg{ \nabla w_h  \, , \,  \nabla u_h \, \frac{|T_{\ti{e}}^{\tmop{ext}}|}{|\ti{e}|}  }_{\ti{e}} 
\; ,
\end{align}
where, $T_{\ti{e}}$ is the quadrilateral emanating from edge $\ti{e}$.
In practice, we have reduced the integrals over extended elements $T^{\tmop{ext}} \in \ti{\mathcal{T}}_h^{\tmop{ext}}$ to integrals over edges $\ti{e} \in \tG$, by introducing the rescaling factor $|T^{\tmop{ext}}|/|\ti{e}|$, which is the ratio between the measure (area) of $T^{\tmop{ext}}$ and the measure (length) of the edge $\ti{e}$. See Figure~\ref{fig:SBM3} for a sketch of the geometric construction.
This simplified approach is reminiscent of using a biased left-node quadrature over an interval in one space dimension, instead of the common mid-point or Gauss quadratures. Similarly, defining 
\begin{equation}
\label{eq:Hdef}
H_{\ti{e}}
\; := \;
\frac{|T_{\ti{e}}^{\tmop{ext}}|}{|\ti{e}|}
\; ,
\end{equation}
we obtain
\begin{align}
( w_h \, , \, f )_{\ti{\mathcal{T}}_h^{\tmop{ext}}} 
&\approx\;
\sum_{\ti{e} \subset \tG}
\avg{ w_h \, , \, f \, H_{\ti{e}}  }_{\ti{e}} 
\; .
\end{align}
Consider now the term
\begin{align}
\avg{ w_h \, , \, h_N  }_{\GN}
&\approx\;
\sum_{\ti{e} \subset \tGN}
\avg{ \oS(w_h) \, , \, h_N(M_h(\ti{\bs{x}})) \, j_{\ti{e}}  }_{\ti{e}} 
\; ,
\end{align}
where we approximated the Jacobian of the transformation mapping any boundary edge $\ti{e} \subset \tG$ to its extension $e^{\tmop{ext}} \subset \G$ as 
\begin{equation}
\label{eq:incr_approx}
j_{\ti{e}}
\; := \;
\frac{ |e^{\tmop{ext}}| }{ |\ti{e}| } 
\approx 
\frac{| \mathrm{d} \mathbf{M}_h(\tilde{\bs{x}}) |}{|  \mathrm{d}  \tilde{\bs{x}} |} 
\; ,
\end{equation}
thus neglecting a higher-order error contributions. Note also that the map $M_h$ is used to evaluate $h_N$ on $\bs{x} = M_h(\ti{\bs{x}})$, and that the shift $\oS(w_h)$ is used to evaluate $w_h$ on $e^{\tmop{ext}}$.
Hence, we use the reference edge $\ti{e}$ to perform the integration over $e^{\tmop{ext}}$, introducing an approximate formula for the change of variables under integration.
Similarly:
\begin{align}
\label{eq:simple1}
\avg{ w_h \, , \,  \nabla u_h \cdot \bs{n}  }_{\GD}
& \approx \;
\sum_{\ti{e} \subset \tGD}
\avg{ \oS(w_h) \, , \,  (\nabla u_h \cdot \bs{n}) \, j_{\ti{e}} }_{\ti{e}}
\; , \\
\label{eq:simple2}
\avg{ \nabla  w_h \cdot \bs{n} \, , \,  u-u_D  }_{\GD}
& \approx \;
\sum_{\ti{e} \subset \tGD}
\avg{ \nabla w_h \cdot \bs{n} \, , \, (\oS(u_h) -u_D) \, j_{\ti{e}}  }_{\ti{e}} 
\; , \\
\label{eq:simple3}
\langle \gamma \, h^{-1} w_h \,, \, u_h-u_D \rangle_{\GD}
& \approx \;
\sum_{\ti{e} \subset \tGD}
\avg{ \gamma \, h^{-1} \, \oS(w_h) \, , \, (\oS(u_h) -u_D) \, j_{\ti{e}}  }_{\ti{e}} 
\; .
\end{align}
Note that in the approximations (\ref{eq:simple1})--(\ref{eq:simple2}) we introduce a slight abuse of notation by writing the normal $\bs{n}$ inside the integrals over $\ti{e}$. In fact, at any point $\ti{\bs{x}}\in\ti{e}$, the normal vector $\bs{n}$ to the actual boundary $\Gamma$ should be evaluated as $\bs{n}(M_h(\ti{\bs{x}}))$, and the same applies $u_D$ in (\ref{eq:simple2})--(\ref{eq:simple3}).
The last three remaining terms to be approximated are:
\begin{align}
\label{eq:ext_jump1}
\avg{ \jumpb{w_h} \, , \, \avgb{\nabla u_h} }_{\mathcal{E}^{\tmop{ext};o} }
& \approx \;
\sum_{\ti{\bs{a}} \in \mathcal{N}(\tG)}  
|\bs{d}_{\ti{\bs{a}}}| \, 
\jumpb{ \oSh w_h}_{\ti{\bs{a}}}
\cdot 
\avgb{ \nabla u_h }_{\ti{\bs{a}}} 
\nonumber \\
&\qquad =
\sum_{\ti{\bs{a}} \in \mathcal{N}(\tG)}  
\frac{|\bs{d}_{\ti{\bs{a}}}|}{2} \, 
\jumpb{ \nabla w_h \cdot \bs{d}_{\ti{\bs{a}}} }_{\ti{\bs{a}}}
\cdot 
\avgb{ \nabla u_h }_{\ti{\bs{a}}} 
\; , 
\\
\label{eq:ext_jump2}
\avg{ \avgb{\nabla w_h} \, , \, \jumpb{u_h} }_{\mathcal{E}^{\tmop{ext};o} }
& \approx \;
\sum_{\ti{\bs{a}} \in \mathcal{N}(\tG)}  
|\bs{d}_{\ti{\bs{a}}}| \, 
\avgb{ \nabla w_h }_{\ti{\bs{a}}} 
\cdot 
\jumpb{ \oSh u_h}_{\ti{\bs{a}}}
\nonumber \\
&\qquad =
\sum_{\ti{\bs{a}} \in \mathcal{N}(\tG)}  
\frac{|\bs{d}_{\ti{\bs{a}}}|}{2} \, 
\avgb{ \nabla w_h }_{\ti{\bs{a}}} 
\cdot 
\jumpb{ \nabla u_h \cdot \bs{d}_{\ti{\bs{a}}} }_{\ti{\bs{a}}}
\; ,
\\
\label{eq:ext_jump3}
\langle \gamma \, h^{-1} \jumpb{w_h} \,, \, \jumpb{u_h} \rangle_{\mathcal{E}^{\tmop{ext};o}}
& \approx \;
\sum_{\ti{\bs{a}} \in \mathcal{N}(\tG)}  
\frac{\gamma}{h} \, |\bs{d}_{\ti{\bs{a}}}| \, 
\jumpb{ \oSh w_h}_{\ti{\bs{a}}}
\cdot 
\jumpb{ \oSh u_h}_{\ti{\bs{a}}}
\nonumber \\
&\qquad =
\sum_{\ti{\bs{a}} \in \mathcal{N}(\tG)}  
\frac{\gamma}{4h} \, |\bs{d}_{\ti{\bs{a}}}| \, 
\jumpb{ \nabla w_h \cdot \bs{d}_{\ti{\bs{a}}} }_{\ti{\bs{a}}}
\cdot 
\jumpb{ \nabla u_h \cdot \bs{d}_{\ti{\bs{a}}} }_{\ti{\bs{a}}}
\; ,
\end{align}
where $\oSh(w_h(\ti{\bs{x}}))=w_h(\ti{\bs{x}})+1/2 \nabla w_h(\ti{\bs{x}}) \cdot \bs{d}$ and $\mathcal{N}(\tG)$ is the set of grid nodes along the surrogate boundary $\tG$, at which both $u_h$ and $w_h$ are continuous.
In particular, the average $\avgb{ \cdot }_{\ti{\bs{a}}}$ and jump $\jumpb{ \cdot }_{\ti{\bs{a}}}$ across the node $\ti{\bs{a}} \subset \tG$ are computed by evaluating the fields on the two edges of $\tG$ emanating from $\ti{\bs{a}}$, with formulas analogous to \eqref{eq:jump} and \eqref{eq:avg}, assuming that the unit vectors $\bs{n}^+,\bs{n}^-$ are normal to the edge in $\mathcal{E}^{\tmop{ext};o}$ stemming from $\ti{\bs{a}}$, i.e. to the vector $\bs{d}_{\ti{\bs{a}}}$. 
In practice, formulas \eqref{eq:ext_jump1}--\eqref{eq:ext_jump3} approximate the integrals on the edges in $\mathcal{E}^{\tmop{ext};o}$ with a mid-point type formula, using information available at each node $\ti{\bs{a}} \in \mathcal{N}(\tG)$. The magnitude $|\bs{d}_{\ti{\bs{a}}}|$ represents the length of each edge.
Note that the formulas \eqref{eq:ext_jump1}--\eqref{eq:ext_jump2} are actually exact in the case of triangular meshes and linear finite elements since the gradients of involved functions are then constant on each cell.
Approximations \eqref{eq:ext_jump1}--\eqref{eq:ext_jump3} can also be applied to the case of Cartesian grids with bi-linear elements, and should include the higher-order terms in the Taylor expansion in the case of higher-order finite element spaces.
\begin{figure}[tb]
	\centering
	\begin{tikzpicture}[scale=1.4]
        \draw [black, draw=none,name path=surr] plot coordinates {
                    (0,-0.5)
                    (1,1.73205)
                    (2.6,2.2)
                  };
		\draw [blue, name path=true] plot[smooth] coordinates {
                    (0.9,-1.75) (1,-1) (2.0,0.4) (3.75,0.45)};


		\draw[line width = 0.25mm,densely dashed,gray] (-1,1.73205) -- (0,3.4641);
		\draw[line width = 0.25mm,densely dashed,gray] (0,3.4641) -- (1,1.73205);
		\draw[line width = 0.25mm,densely dashed,gray] (1,1.73205) -- (1,3.4641);
		\draw[line width = 0.25mm,densely dashed,gray] (1,3.4641) -- (0,3.4641);
		\draw[line width = 0.25mm,densely dashed,gray] (1,3.4641) -- (2.6,2.2);
		\draw[line width = 0.25mm,densely dashed,gray] (1,3.4641) -- (3.25,3.4641);
		\draw[line width = 0.25mm,densely dashed,gray] (3.25,3.4641) -- (2.6,2.2);
		\draw[line width = 0.25mm,densely dashed,gray] (0,-0.5) -- (-2,0.5);
		\draw[line width = 0.25mm,densely dashed,gray] (-2,0.5) -- (-1,1.73205);
		\draw[line width = 0.25mm,densely dashed,gray] (-1,1.73205) -- (1,1.73205);
		\draw[line width = 0.25mm,densely dashed,gray] (0,-0.5) -- (-1,1.73205);
		\draw [line width = 0.5mm,blue, name path=true] plot[smooth] coordinates {(0.9,-1.75) (1,-1) (2.0,0.4) (3.75,0.45)};
		\draw [line width = 0.3mm,red, name path=true] plot[smooth] coordinates {(1,-1) (2.0,0.4)};
        \draw[line width = 0.5mm,red] (2.6,2.2) -- (3.5,2.6);
		\draw[line width = 0.5mm,red] (1,1.73205) -- (2.6,2.2);
		\draw[line width = 0.5mm,red] (1,1.73205) -- (0,-0.5);
		\draw[line width = 0.5mm,red] (0,-0.5) -- (-1,-0.7);

		\coordinate (Ta1) at (0,-0.5);
		\coordinate (a1e) at (1,-1);
		\coordinate (m1) at ($0.5*(Ta1) + 0.5*(a1e)$);
		\coordinate (m1t) at ($0.5*(Ta1) + 0.5*(a1e) +(0.1,0.2)$);

		\coordinate (Ta2) at (1,1.73205);
		\coordinate (a2e) at (2,0.4);
		\coordinate (m2) at ($0.5*(Ta2) + 0.5*(a2e)$);
		\coordinate (m2t) at ($0.5*(Ta2) + 0.5*(a2e) +(0.25,0.1)$);
		
		\coordinate (mte) at ($0.5*(Ta1) + 0.5*(Ta2)$);
		\coordinate (g1e) at ($0.577*(Ta1) + 0.423*(mte)$);
		\coordinate (g1et) at ($0.577*(Ta1) + 0.423*(mte) - (0.22,-0.1)$);
		\coordinate (g2e) at ($0.577*(Ta2) + 0.423*(mte)$);
		\coordinate (g2et) at ($0.577*(Ta2) + 0.423*(mte) - (0.22,-0.1)$);

	    \draw[->,line width = 0.5mm,-latex] (1,1.73205) -- (2,0.4);
	 	\draw[->,line width = 0.5mm,-latex] (0,-0.5) -- (1,-1);
	 	\draw[->,lightgray, line width = 0.25mm,-latex] (g1e) -- (1.21,-0.70);
	    \draw[->,lightgray, line width = 0.25mm,-latex] (g2e) -- (1.77,0.1);

		\filldraw[cyan] (m1) circle (1.5pt);
		\draw[yellow] (m1) circle (1.5pt);
		\filldraw[cyan] (m2) circle (1.5pt);
		\draw[yellow] (m2) circle (1.5pt);
		\filldraw[black] (g1e) circle (1.5pt);
		\draw[yellow] (g1e) circle (1.5pt);
		\filldraw[black] (g2e) circle (1.5pt);
		\draw[yellow] (g2e) circle (1.5pt);

		\node[text width=0.5cm] at (m1t) {${\color{cyan} \bs{q}_1^{\tmop{ext}}}$};
		\node[text width=0.5cm] at (m2t) {${\color{cyan} \bs{q}_2^{\tmop{ext}}}$};
		\node[text width=0.5cm] at (g1et) {$\bs{q}_{\ti{e}_1}$};
		\node[text width=0.5cm] at (g2et) {$\bs{q}_{\ti{e}_2}$};
		\node[text width=0.5cm] at (3.3,2.15) {${\color{red}\tG}$};
		\node[text width=3cm] at (1.25,2.25) {${\color{red}\tO}$};
		\node[text width=0.5cm] at (4.1,0.65) {${\color{blue}\G}$};
        \node[text width=0.5cm] at (0.5,0.7) {\color{red} $\ti{e}$};
        \node[text width=0.5cm] at (0.1,-0.7) {$\ti{\bs{a}}_{1}$};
        \node[text width=0.5cm] at (1.35,1.635) {$\ti{\bs{a}}_2$};
        \node[text width=0.5cm] at (1.2,-1.05) {$\bs{a}_{1}^{\tmop{ext}}$};
        \node[text width=0.5cm] at (2.2,0.30) {$\bs{a}_2^{\tmop{ext}}$};
        \node[text width=0.05cm] at (0.8,0.2) {${T_{\ti{e}}^{\tmop{ext}}}$};
        \node[text width=0.05cm] at (1.5,-0.4) {\color{red} ${{e^{\tmop{ext}}}}$};
        \node[text width=1.5cm] at (0.0,1.2) {\color{red} $\ti{T}_{\ti{e}} \subset \tO$};
        \node[text width=0.5cm] at (0.5,-1.0) {$\bs{d}_{\ti{\bs{a}}_1}$};
        \node[text width=0.5cm] at (2.0,0.8) {$\bs{d}_{\ti{\bs{a}}_2}$};

		\end{tikzpicture}
		\caption{An edge $\ti{e} \in \tG$, the attached element $\ti{T}_{\ti{e}} \in \ti{\mathcal{T}}_h$, and the attached extended element ${T_{\ti{e}}^{\tmop{ext}}} \in \ti{\mathcal{T}}_h^{\tmop{ext}}$. The above sketch also features the geometric construction for the integration of the variational forms on the extended element $T^{\tmop{ext}}$. Observe that the positions of the quadrature points on the ``lateral'' edges of ${T_{\ti{e}}^{\tmop{ext}}}$ are $\bs{q}_1^{\tmop{ext}}=\ti{\bs{a}}_{1}+1/2\bs{d}_{\ti{\bs{a}}_1}$ and $\bs{q}_2^{\tmop{ext}}=\ti{\bs{a}}_{2}+1/2\bs{d}_{\ti{\bs{a}}_2}$, respectively. Instead, $\bs{q}_{\ti{e}_1}$ and $\bs{q}_{\ti{e}_2}$ are Gauss quadrature points along the edge $\ti{e}$.
		}
		\label{fig:SBM3}
\end{figure}
In summary, the weak form~\ref{eq:SBM_var_gen} is approximated as 
\begin{multline}
\label{eq:SBM_var_gen_simple}
( \nabla w_h  \, , \,  \nabla u_h )_{\tO}
\, + \, 
\sum_{\ti{e} \subset \tG}
\avg{ \nabla w_h  \, , \,  \nabla u_h \, H_{\ti{e}}  }_{\ti{e}} 
\, - \, 
\sum_{\ti{e} \subset \tGN}
\avg{ \oS(w_h) \, , \, h_N(M_h(\ti{\bs{x}})) \, j_{\ti{e}}  }_{\ti{e}} 
\\
\; - \; 
\sum_{\ti{\bs{a}} \in \mathcal{N}(\tG)}  
\frac{|\bs{d}_{\ti{\bs{a}}}|}{2} \, 
\bigg(
\jumpb{ \nabla w_h \cdot \bs{d}_{\ti{\bs{a}}} }_{\ti{\bs{a}}}
\cdot 
\avgb{ \nabla u_h }_{\ti{\bs{a}}} 
\; + \theta \,
\avgb{ \nabla w_h }_{\ti{\bs{a}}} 
\cdot 
\jumpb{ \nabla u_h \cdot \bs{d}_{\ti{\bs{a}}} }_{\ti{\bs{a}}}
\bigg)
\\ 
\; - \;
\sum_{\ti{e} \subset \tGD}
\bigg(
\avg{ \oS(w_h) \, , \,  (\nabla u_h \cdot \bs{n}) \, j_{\ti{e}} }_{\ti{e}}
\, + \theta \, 
\avg{ \nabla w_h \cdot \bs{n} \, , \, (\oS(u_h) -u_D) \, j_{\ti{e}}  }_{\ti{e}} 
\bigg)
\\ 
\; + \;
\sum_{\ti{\bs{a}} \in \mathcal{N}(\tG)}  
\frac{\gamma}{4 h} \, |\bs{d}_{\ti{\bs{a}}}| \, 
\jumpb{ \nabla w_h \cdot \bs{d}_{\ti{\bs{a}}} }_{\ti{\bs{a}}}
\cdot 
\jumpb{ \nabla u_h \cdot \bs{d}_{\ti{\bs{a}}} }_{\ti{\bs{a}}}
+\sum_{\ti{e} \subset \tGD}
\frac{\gamma}{h} \, 
\avg{  \oS(w_h) \, , \, (\oS(u_h) -u_D) \, j_{\ti{e}}  }_{\ti{e}} 
\\
\; = \;
( w_h \, , \, f )_{\tO}
\, + \, 
\sum_{\ti{e} \subset \tG}
\avg{ w_h \, , \, f \, H_{\ti{e}}  }_{\ti{e}} 
\; .
\end{multline}

\begin{rem}
Observe that the shape functions utilized by the proposed method are a partition of unity. Furthermore, the proposed formulation is exact if the solution is affine, because the gradient is globally constant and the Taylor expansion is exact in this case.
\end{rem}

\section{Theoretical analysis}
\label{sec:theo}

As already alluded to at the beginning of Section~\ref{sec:surr_dom_bndry}, the theoretical analysis will be performed only for two-dimensional triangular grids. With some appropriate adjustment, the analysis could be extended to uniform Cartesian grids, but we prefer to omit this step for the sake of brevity and to avoid complex notation.
For the sake of simplicity and without loss of generality, we shall only consider here problem~\eqref{eq:strong} with homogeneous boundary conditions, that is the case of $u_D=0$ and $h_N=0$.
Problem~\eqref{eq:strong} is well posed for any $f \in L^2 (\Omega)$, and we assume that the boundary is sufficiently smooth to have elliptic regularity, that is the weak solution $u$ is assumed in $H^2 (\Omega)$ with $| u |_{2, \Omega} \leqslant C \| f \|_{0,\Omega}$.
This choice is motivated by the fact that we would ultimately like to study the $L^2$-optimality of the numerical approximation $u_h$ to $u$ in the case of Dirichlet and Neumann boundary conditions, and this can be done only in the case in which $u$ is at least in $H^2 (\Omega)$.

As already mentioned in Remark~\ref{rem:edge_ext}, in principle the quadrilaterals $T_{\ti{e}}^{\tmop{ext}}$ have curved edges $e^{\tmop{ext}}$, which are approximated with straight edges between nodes.
The union of these straight edges forms the boundary $\G^h=\partial \Om^h$ of the approximate domain $\Om^h \supset \tO$. The measure of $\Om^h$ approximates the measure of $\Om$ up to an error $O(h^2)$.
Neglecting the discrepancy between $\Om^h$ and $\Om$ is normally acceptable in the case of piecewise-linear (globally continuous) finite element spaces, since the solution error in the natural and $L^2$ norms are typically of order $O(h)$ and $O(h^2)$, respectively.
Hence, for the moment, we neglect this fine level of approximation and develop our theory as if one can compute the length of the edge $|e^{\tmop{ext}}|$ and the area of the quadrilateral $|T_{\ti{e}}^{\tmop{ext}}|$ exactly. Moreover, the Jacobian of the mapping from an edge $\ti{e}$ on the surrogate boundary to the corresponding edge $e^{\tmop{ext}}$ is approximated according to~\eqref{eq:incr_approx}.
Taking these errors into account would make the theoretical analysis much more tedious.
While this is a necessity in the case of higher-order approximation spaces, these errors can be neglected for the case of piecewise-linear interpolation spaces considered here.   
In the analysis that follows we will consider the following bilinear form,  
\begin{align}
a_h (w, v) 
&=\;
(\nabla w , \, \nabla v)_{\tO} 
\, + \, (\nabla w \,, \, \nabla v)_{\ti{\mathcal{T}}_h^{\tmop{ext}}}
\nonumber \\
&\phantom{=}\;
\, - \, \langle \jumpb{w} \,, \, \avgb{\nabla v} \rangle_{\mathcal{E}^{\tmop{ext} ; o}} 
\, - \, \theta \, \langle \jumpb{v} \,, \, \avgb{\nabla w}\rangle_{\mathcal{E}^{\tmop{ext} ; o}} 
\, + \, \langle \gamma \, h^{-1} \, \jumpb{w} \,, \, \jumpb{v} \rangle_{\mathcal{E}^{\tmop{ext} ; o}}
\nonumber \\
&\phantom{=}\;
\, - \, \sum_{\ti{e} \subset \tGD} \langle \oS(w) \,, \oS (\nabla v) \cdot \bs{n} \, j_{\ti{e}} \rangle_{\ti{e}} 
\, - \, \theta \, \sum_{\ti{e} \subset \tGD} \langle \oS (\nabla w) \cdot \bs{n} \,, \, \oS (v) \, j_{\ti{e}} \rangle_{\ti{e}} 
\nonumber \\
&\phantom{=}\;
\, + \, \sum_{\ti{e} \subset \tGD} \langle \gamma \, h^{-1} \, \oS (w) \, , \, \oS (v) \, j_{\ti{e}} \rangle_{\ti{e}}
\; ,
\end{align}
which is an ``intermediate step'' between~\eqref{eq:SBM_var_gen} and~\eqref{eq:SBM_var_gen_simple}, in the sense that it relates to~\eqref{eq:SBM_var_gen_simple} but, to simplify the notation, the terms \eqref{eq:ext_jump1},~\eqref{eq:ext_jump2}, and~\eqref{eq:ext_jump3} are not approximated and~\eqref{eq:incr_approx} is applied as for~\eqref{eq:simple1},~\eqref{eq:simple2}, and~\eqref{eq:simple3}.
Here, we interpret the shift operator $\oS$ as 
\begin{equation}
\label{eq:new_shift}
\oS (v^h) \; = \;  v^h (\mathbf{M}_h (\tilde{\bs{x}})) \; , 
\end{equation}
an idea we already explored in~\cite{collins2023penalty,visbech2025spectral}.
When applied to a piecewise linear function $v_h \in V_h$, this definition of $\oS$ is consistent with (i.e., identical to) definition~\eqref{eq:oS_taylor}, which is based on Taylor expansions.
Moreover, for $v_h \in V_h$, $\oS
(\nabla v_h) = \nabla v_h$ on any edge $\ti{e} \subset \tGD$, since $\nabla
v_h$ is piecewise constant. Thus the SBM solution $u_h \in V_h$ of problem~\eqref{eq:strong} with homogeneous boundary conditions satisfies
\begin{equation}
  \label{weakSBM} a_h (v_h, u_h) = (v_h \,, \,
  f)_{\tO} \, + \, \sum_{\ti{e} \subset \tG}
  \langle v_h, f \, H_{\ti{e}}
  \rangle_{\ti{e}}, \quad \forall v_h \in V_h \; .
\end{equation}
Interpreting the shift operator $\oS$ as in~\eqref{eq:new_shift} allows us to write that the exact solution $u \in H^2 (\Omega)$ to the same problem satisfies
\begin{equation}
  \label{weakFormh} a_h (v_h, u ) = (v_h \,, \,
  f)_{\Omega} \,, \quad \forall v_h \in V_h \; ,
\end{equation}
where we also have neglected errors in the geometric approximation of the boundaries according to~\eqref{eq:incr_approx}, so that 
\begin{align}
\label{cheat}
    \sum_{\ti{e} \subset \tGD} \langle \oS (v_h) \,, \oS (\nabla u) \cdot \bs{n} \, j_{\ti{e}} \rangle_{\ti{e}} 
    &=\;
   \sum_{\ti{e} \subset \tGD} \int_{\ti{e}} \oS (v_h) \, \oS
   (\nabla u) \cdot \bs{n} \, \frac{|  \mathrm{d} \mathbf{M}_h
   (\tilde{\bs{x}}) |}{| \mathrm{d} \tilde{\bs{x}} |}  
   \nonumber \\
   &=\;
   \sum_{\ti{e} \subset \tGD} \int_{\mathbf{M}_h \left( \ti{e} \right)} v_h \, \nabla u \cdot \bs{n}
   \nonumber \\
   &=\;
   \langle v_h \, , \nabla u \cdot \bs{n} \rangle_{0,\tGD} \; .
\end{align} 
An important assumption is now in order:
\begin{assumption}
\label{assu:je_bound}
The term $j_{\ti{e}}$ is bounded above, that is $|j_{\ti{e}}|\le C_D$.
\end{assumption}
Observe that Assumption~\ref{assu:je_bound} is normally satisfied in engineering computations, even the ones with the most challenging geometries. This is because the grids utilized in practical computations produce a surrogate boundary that broadly captures the shape of the true domain, for any grid resolution.

The first results in our analysis are two lemmas on the stability of the bilinear form~\eqref{weakSBM}, followed by an error estimate in the natural norm.

\begin{lemma}[\textbf{stability of the penalty-based formulations}]
\label{LemCoer}
    The bilinear form $a_h$ is coercive on $V_h$ for any $\theta
  \in \mathbb{R}$ and $\gamma > 0$ sufficiently large. More precisely, there exist
  $\alpha > 0$ and $\gamma_0 > 0$, depending only on the regularity of the
  mesh and on $\theta$, such that for all $\gamma \geqslant \gamma_0$ and all
  $v_h \in V_h$
  \[ a_h (v_h, v_h) \geqslant \alpha \interleave v_h \interleave_h^2 \]
  with
  \begin{align}
  	  \label{triple}
      \interleave v_h \interleave_h^2 
      & := \, 
      | v_h |_{1, \Omega}^2 + \frac{1}{h}
     {\left\| \jumpb{v_h} \right\|_{0,\mathcal{E}^{\tmop{ext} ; o}}^2}  +
     \frac{1}{h} {\left\| \oS (v_h) \right\|_{0,\tGD^{[j]}}^2} 
	\\[.2cm]
  	\label{norm-jG}
	\left\| w \right\|_{0,\tGD^{[j]}}
      & := \, 
      \sum_{\ti{e} \subset \tGD } 	\left\| w \right\|_{0,j[\ti{e}]}
      \; ,
	\\
	\label{norm-je}
	\left\| w \right\|_{0,j[\ti{e}]}
      & := \, 
      \left\| w {\sqrt{j_{\ti{e}}}} \right\|_{0,\ti{e}} 
      \; .
  \end{align} 
\end{lemma}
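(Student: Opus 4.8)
The plan is to run the classical coercivity argument for (non-)symmetric interior penalty discontinuous Galerkin methods, with the extended-element geometry handled by exploiting that the finite element gradients are piecewise constant. Setting $w = v = v_h$ and using that $\oS(\nabla v_h) = \nabla v_h$ on every $\ti{e} \subset \tGD$ (the gradient of a $\mathcal{P}^1$ function being constant on each cell and on its extension), the consistency and adjoint-consistency contributions pair up, giving
\[
a_h(v_h,v_h) = |v_h|_{1,\Omega}^2 + \frac{\gamma}{h}\|\jumpb{v_h}\|_{0,\mathcal{E}^{\tmop{ext};o}}^2 + \frac{\gamma}{h}\|\oS(v_h)\|_{0,\tGD^{[j]}}^2 - (1+\theta)\,(I_1+I_2),
\]
where $I_1 = \langle \jumpb{v_h}, \avgb{\nabla v_h}\rangle_{\mathcal{E}^{\tmop{ext};o}}$, $I_2 = \sum_{\ti{e}\subset\tGD}\langle \oS(v_h), \nabla v_h\cdot\bs{n}\,j_{\ti{e}}\rangle_{\ti{e}}$, and $|v_h|_{1,\Omega}^2 = |v_h|_{1,\tO}^2 + (\nabla v_h,\nabla v_h)_{\ti{\mathcal{T}}_h^{\tmop{ext}}}$. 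For the skew-symmetric choice $\theta=-1$ the cross terms drop out and coercivity is immediate (with any $\gamma\geq 0$); the remaining work concerns $\theta\neq-1$, where $I_1,I_2$ must be absorbed.

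The two key ingredients are discrete trace inequalities on the extended elements. Since $\nabla v_h$ is constant on each surrogate triangle $\ti{T}$ and equals the constant gradient of its extension, the trace of $\nabla v_h$ on any edge collapses to a length-over-area ratio. For a lateral edge $e_{\ti{\bs{a}}}\in\mathcal{E}^{\tmop{ext};o}$ of length $|\bs{d}_{\ti{\bs{a}}}|$ this gives $\|\nabla v_h\|_{0,e_{\ti{\bs{a}}}}^2 = (|\bs{d}_{\ti{\bs{a}}}|/|\ti{T}|)\,|v_h|_{1,\ti{T}}^2$; invoking shape-regularity ($|\ti{T}|\gtrsim h^2$), the layer bound $|\bs{d}_{\ti{\bs{a}}}|\lesssim h$ inherent to the construction, and the fact that each triangle is charged by boundedly many lateral edges, summation yields
\[
\|\avgb{\nabla v_h}\|_{0,\mathcal{E}^{\tmop{ext};o}}^2 \;\leq\; \frac{C_{\mathrm{tr}}}{h}\,|v_h|_{1,\Omega}^2 .
\]
Analogously, on $\ti{e}\subset\tGD$ one has $\|\nabla v_h\cdot\bs{n}\,\sqrt{j_{\ti{e}}}\|_{0,\ti{e}}^2 = j_{\ti{e}}\,(|\ti{e}|/|\ti{T}_{\ti{e}}|)\,|v_h|_{1,\ti{T}_{\ti{e}}}^2$, and with Assumption~\ref{assu:je_bound} ($j_{\ti{e}}\leq C_D$), $|\ti{e}|\approx h$ and $|\ti{T}_{\ti{e}}|\gtrsim h^2$,
\[
\sum_{\ti{e}\subset\tGD}\|\nabla v_h\cdot\bs{n}\,\sqrt{j_{\ti{e}}}\|_{0,\ti{e}}^2 \;\leq\; \frac{C_{\mathrm{tr}}'}{h}\,|v_h|_{1,\Omega}^2 .
\]

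With these bounds I would control the cross terms by Cauchy--Schwarz and Young's inequality, grouping $I_2$ through $j_{\ti{e}} = \sqrt{j_{\ti{e}}}\sqrt{j_{\ti{e}}}$ so that $\oS(v_h)\sqrt{j_{\ti{e}}}$ is matched by the penalty and $\nabla v_h\cdot\bs{n}\sqrt{j_{\ti{e}}}$ by the trace bound: for any $\delta>0$,
\[
|1+\theta|\,(|I_1|+|I_2|) \leq \frac{|1+\theta|\,\delta}{2h}\Big(\|\jumpb{v_h}\|_{0,\mathcal{E}^{\tmop{ext};o}}^2 + \|\oS(v_h)\|_{0,\tGD^{[j]}}^2\Big) + \frac{|1+\theta|\,(C_{\mathrm{tr}}+C_{\mathrm{tr}}')}{2\delta}\,|v_h|_{1,\Omega}^2 .
\]
Inserting this leaves a coefficient $1 - \tfrac{|1+\theta|(C_{\mathrm{tr}}+C_{\mathrm{tr}}')}{2\delta}$ on $|v_h|_{1,\Omega}^2$ and $\gamma - \tfrac{|1+\theta|\delta}{2}$ on the two $h^{-1}$-scaled penalty terms; choosing $\delta = |1+\theta|(C_{\mathrm{tr}}+C_{\mathrm{tr}}')$ fixes the first at $\tfrac12$, and then any $\gamma \geq \gamma_0 := \tfrac{|1+\theta|\delta}{2}+\tfrac12$ makes the rest positive, delivering $a_h(v_h,v_h)\geq\alpha\interleave v_h\interleave_h^2$ with $\gamma_0,\alpha$ depending only on $\theta$ and the mesh regularity. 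The main obstacle is precisely the two trace inequalities: the extended quadrilaterals are not shape-regular (they degenerate as $|\bs{d}|\to0$ and can be strongly skewed), so no off-the-shelf trace inequality applies. What rescues the argument is that the $\mathcal{P}^1$ gradients are piecewise constant, reducing every trace estimate to an elementary comparison of an edge length to a triangle area; the only geometric inputs then required are shape-regularity of the background mesh, the layer bound $|\bs{d}|\lesssim h$, and the upper bound on $j_{\ti{e}}$ from Assumption~\ref{assu:je_bound}, together with the $O(1)$ combinatorial overlap of edges per triangle.
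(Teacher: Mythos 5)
Your proposal is correct and follows essentially the same route as the paper's proof: expand $a_h(v_h,v_h)$ using $\oS(\nabla v_h)=\nabla v_h$ on $\tGD$, bound the two $(1+\theta)$-weighted cross terms by Cauchy--Schwarz and Young against the discrete trace inverse inequality $\|\avgb{\nabla v_h}\|_{0,\mathcal{E}^{\tmop{ext};o}}^2+\|\nabla v_h\|_{0,\tGD^{[j]}}^2\leq C\,h^{-1}|v_h|_{1,\Omega}^2$, and absorb them into the penalty for $\gamma\geq\gamma_0$; your explicit derivation of that trace inequality from the piecewise-constant gradients (edge length over triangle area, the layer bound $|\bs{d}|\lesssim h$, bounded overlap, and Assumption~\ref{assu:je_bound}) supplies exactly the step the paper merely asserts. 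The one small slip is your parenthetical for the skew-symmetric case: coercivity with respect to $\interleave\cdot\interleave_h$ for $\theta=-1$ requires $\gamma>0$, not ``any $\gamma\geq 0$'', since the triple norm contains the jump and boundary penalty terms that $a_h$ with $\gamma=0$ does not control --- that penalty-free case is precisely why the paper needs the separate inf-sup result of Lemma~\ref{LemInfSup}.
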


\begin{proof}

\noindent We start by stating a trace inverse inequality (which implicitly relies on Assumption~\ref{assu:je_bound})
  \[ \left\| \avgb{\nabla v_h} \right\|_{0,\mathcal{E}^{\tmop{ext} ; o}}^2 +
     {\| \nabla v_h \|_{0,\tGD^{[j]}}^2}  \leqslant \frac{C_{\tmop{inv}}^2}{h} {| v_h
     |^2_{1, \Omega}}  \]
  and we conclude
  \begin{align*}
  a_h (v_h, v_h) 
  & \geqslant \; 
  | v_h |_{1, \Omega}^2 - (1 + \theta) \langle
     \jumpb{v_h} \,, \, \avgb{\nabla v_h}
     \rangle_{\mathcal{E}^{\tmop{ext} ; o}} 
     +\langle \gamma \, h^{-1}  \jumpb{v_h}, \jumpb{v_h} \rangle_{0,\mathcal{E}^{\tmop{ext} ; o}}
  \nonumber \\
  & \phantom{\geqslant} \; 
  \; + \;
  (1 + \theta) \sum_{\ti{e} \subset \tGD } \langle \oS (v_h) \,, \nabla v_h \cdot \bs{n} \, j_{\ti{e}} \rangle_{0,\ti{e}} 
  \; + \;
  \sum_{\ti{e} \subset \tGD } \langle \gamma \, h^{-1} \,  \, \oS (v_h) \,, \, \oS (v_h) j_{\ti{e}} \rangle_{0,\ti{e}} 
  \nonumber \\
  & \geqslant \; 
  \left( 1 - \frac{\varepsilon}{2} C_{\tmop{inv}}^2 \right) | v_h |_{1, \Omega}^2 
  \; + \;
  \left( \frac{\gamma}{h} \,  - \frac{(1 + \theta)^2}{2 \varepsilon h} \right) 
     \left( \left\| \jumpb{v_h}\right\|_{0,\mathcal{E}^{\tmop{ext} ; o}}^2  
  \; + \;
  \left\| \oS (v_h) \right\|_{0,\tGD^{[j]}}^2 \right) 
       \; .
  \end{align*}
  This gives the announced estimate taking $\varepsilon$ sufficiently small,
  e.g. $\varepsilon = C_{\tmop{inv}}^{- 2}$, and $\gamma_0= \frac{1+(1 + \theta)^2 C_{\tmop{inv}}^{2}}{2} $, so that $\alpha=\frac{1}{2}$.
  \end{proof}

The above lemma does not cover the antysymmetric penalty-free case, i.e.
$\theta = - 1$, $\gamma = 0$. Indeed, the bilinear form $a_h$ is coercive with
respect to the $H^1$ seminorm, but not the norm $\interleave \cdot
\interleave_h$. However, we can replace the notion of coercivity by the
inf-sup property, as stated in the following lemma. This can be done under
additional assumption about the Dirichlet part of the surrogate boundary:

\begin{assumption}\label{AssForPenFree} There exist positive constants $c_D$ and $C_D$ such that on any	edge $\ti{e} \subset \tGD$ we have $c_D \leqslant j_{\tilde{e}} \leqslant C_D$ and $\tilde{\bs{n}} \cdot \bs{n}	\geqslant c_D$, where, at any point $\ti{\bs{x}}\in\ti{e}$, $\tilde{\bs{n}}$ is the normal to $\ti{e}$ that points outside of $\tO$, while $\bs{n}$ is the normal to the actual boundary $\Gamma$ evaluated at $M_h(\ti{\bs{x}})$.
\end{assumption} 
\noindent Observe that condition $\tilde{\bs{n}} \cdot \bs{n} \geqslant c_D$ in Assumption \ref{AssForPenFree} can be violated, but typically only on isolated edges.
We conjecture that a finer analysis could then help to prove the upcoming result even without this assumption.
\begin{lemma}[\textbf{Stability of the penalty-free formulation}]
  \label{LemInfSup}Assume $\theta = - 1$, $\gamma = 0$. There exists $\alpha
  > 0$, depending only on the regularity of the mesh, such that
  \begin{eqnarray*}
    \inf_{v_h \in V_h} \sup_{w_h \in V_h}  \frac{a_h (w_h, v_h)}{\interleave
    w_h \interleave_h \interleave v_h \interleave_h} \geqslant \alpha
    \; .
  \end{eqnarray*}
\end{lemma}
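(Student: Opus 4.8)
The plan is to follow the classical supremizer construction for penalty-free non-symmetric formulations (in the spirit of Burman's penalty-free Nitsche method), adapted here to the surrogate/gap geometry. First I would test the form against $w_h = v_h$. Since $\theta=-1$ and $\gamma=0$, the two antisymmetric pairs cancel exactly, both on $\mathcal{E}^{\tmop{ext};o}$ and on $\tGD$, leaving
\[
a_h(v_h,v_h) = |v_h|_{1,\Omega}^2 \;.
\]
This is precisely the seminorm coercivity noted before the lemma, and it controls only the first term of $\interleave\cdot\interleave_h$. The whole difficulty is to additionally recover the jump contribution $\tfrac1h\|\jumpb{v_h}\|_{0,\mathcal{E}^{\tmop{ext};o}}^2$ and the trace contribution $\tfrac1h\|\oS(v_h)\|_{0,\tGD^{[j]}}^2$, which I abbreviate by $J^2 := \tfrac1h\|\jumpb{v_h}\|_{0,\mathcal{E}^{\tmop{ext};o}}^2 + \tfrac1h\|\oS(v_h)\|_{0,\tGD^{[j]}}^2$, writing also $G^2 := |v_h|_{1,\Omega}^2$, so that $\interleave v_h\interleave_h^2 = G^2 + J^2$.

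Second, I would build a discrete supremizer $w_h^\ast \in V_h$, supported on the single strip of elements touching $\tG$, whose role is to make the surviving antisymmetric terms reproduce $J^2$. Concretely, I want $w_h^\ast$ such that
\[
\langle \jumpb{v_h},\avgb{\nabla w_h^\ast}\rangle_{\mathcal{E}^{\tmop{ext};o}} + \sum_{\ti{e}\subset\tGD}\langle \nabla w_h^\ast\cdot\bs{n},\,\oS(v_h)\,j_{\ti{e}}\rangle_{\ti{e}} \;\geq\; c\,J^2 \;,
\]
together with the stability bound $\interleave w_h^\ast\interleave_h \lesssim J$. The construction is local: on each boundary element I fix the nodal value at its interior vertex so that the (constant) $\mathcal{P}^1$-gradient of $w_h^\ast$ has prescribed normal component $\sim h^{-1}\oS(v_h)$ on the adjacent Dirichlet edge, and analogously so that its average gradient across each lateral edge $e_{\ti{\bs{a}}}$ is aligned with $\jumpb{v_h}$. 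Here Assumption~\ref{AssForPenFree} is essential: the bound $\tn\cdot\bs{n}\geq c_D$ guarantees that adjusting the element gradient (which is naturally aligned with the surrogate normal $\tn$) yields a flux in the true direction $\bs{n}$ that does not degenerate, while $c_D \leq j_{\ti{e}} \leq C_D$ makes $\|\cdot\|_{0,\tGD^{[j]}}$ equivalent to the plain $L^2$-norm so that the reproduction constant $c$ stays bounded away from zero. Standard scaling/inverse estimates then give $|w_h^\ast|_{1,\Omega}^2 \sim J^2$ and that the jumps and traces of $w_h^\ast$ are themselves $\lesssim h^{1/2}J$, hence $\interleave w_h^\ast\interleave_h \lesssim J$.

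Third, I would set $w_h = v_h + \delta\,w_h^\ast$ and expand, using $\oS(\nabla v_h)=\nabla v_h$ for $\mathcal{P}^1$ functions,
\[
a_h(w_h,v_h) = G^2 + \delta\,a_h(w_h^\ast,v_h) \;.
\]
The favourable part of $a_h(w_h^\ast,v_h)$ is $\geq cJ^2$ by construction; the remaining terms $(\nabla w_h^\ast,\nabla v_h)$, $\langle\jumpb{w_h^\ast},\avgb{\nabla v_h}\rangle$ and $\sum_{\ti{e}}\langle\oS(w_h^\ast),\nabla v_h\cdot\bs{n}\,j_{\ti{e}}\rangle$ are each bounded by $C\,J\,G$, using Cauchy--Schwarz, the trace-inverse inequality already invoked in the proof of Lemma~\ref{LemCoer} (which bounds $\|\avgb{\nabla v_h}\|_{0,\mathcal{E}^{\tmop{ext};o}}$ and $\|\nabla v_h\cdot\bs{n}\|_{0,\tGD^{[j]}}$ by $h^{-1/2}G$), and the estimates on $w_h^\ast$. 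Thus $a_h(w_h,v_h) \geq G^2 + \delta\,(cJ^2 - CJG)$; absorbing $\delta CJG$ by Young's inequality and choosing $\delta$ small enough yields $a_h(w_h,v_h)\geq \tilde\alpha\,(G^2+J^2) = \tilde\alpha\,\interleave v_h\interleave_h^2$. Since also $\interleave w_h\interleave_h \leq \interleave v_h\interleave_h + \delta\interleave w_h^\ast\interleave_h \lesssim \interleave v_h\interleave_h$, dividing produces the inf-sup constant $\alpha$.

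The hard part will be the supremizer construction of the second step: producing a single $w_h^\ast\in V_h$ that simultaneously reproduces both the interior-jump functional and the Dirichlet-trace functional with the $O(h^{-1})$ gain, while keeping $\interleave w_h^\ast\interleave_h\lesssim J$. This is delicate because $\nabla w_h^\ast$ is constant per element whereas $\bs{n}$, $\oS(v_h)$ and $\jumpb{v_h}$ vary along the edges, so the matching can only be enforced in an averaged sense, and because the same nodal degrees of freedom feed several adjacent edges, one must verify that the local contributions do not cancel. This is exactly where Assumption~\ref{AssForPenFree} enters (and where the observation that $\tn\cdot\bs{n}\geq c_D$ may fail only on isolated edges becomes relevant), and it is the step I expect to require the most care.
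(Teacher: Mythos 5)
Your skeleton---seminorm coercivity from $w_h=v_h$, a boundary supremizer added as $w_h=v_h+\delta\,w_h^\ast$, Cauchy--Schwarz plus trace-inverse bounds, Young's inequality, and the stability bound $\interleave w_h^\ast\interleave_h\lesssim J$---is exactly the paper's (Burman-type) strategy, and your use of Assumption~\ref{AssForPenFree} to keep the normal flux nondegenerate matches the paper too. The genuine gap is precisely the step you flag as hard: you ask a single supremizer to recover \emph{both} the Dirichlet-trace term and the interior-jump term $\tfrac1h\|\jumpb{v_h}\|_{0,\mathcal{E}^{\tmop{ext};o}}^2$, by fixing one interior nodal value per boundary element. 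That construction is over-determined and unsubstantiated: the gradient of a $\mathcal{P}^1$ function on a boundary triangle is one constant vector with (given the trace on $\ti{e}$) a single remaining degree of freedom, while the lateral edges are shared between neighbouring boundary elements, so prescribing the normal flux on the Dirichlet edge \emph{and} the averaged gradients across both adjacent lateral edges simultaneously is in general impossible; you give no argument that these coupled local constraints are compatible.

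The missing idea---which also dissolves the difficulty---is that the jump part of the triple norm needs no recovery at all. Any $v_h\in V_h$ is continuous at the boundary nodes, so $\jumpb{v_h}$ vanishes at the node $\ti{\bs{a}}$ from which each fictitious edge emanates and grows linearly along it with slope given by the difference of the constant gradients of the two adjacent elements; scaling then yields the paper's inequality (\ref{termc4}),
\begin{equation*}
\frac1h\,\bigl\| \jumpb{v_h} \bigr\|_{0,\mathcal{E}^{\tmop{ext};o}}^2 \;\leqslant\; c_4\,|v_h|_{1,\Omega}^2 \; ,
\end{equation*}
so the jump contribution is already dominated by the coercivity term $G^2$ (the paper simply splits $\tfrac12|v_h|_{1,\Omega}^2$ at the end to exhibit it). Consequently the supremizer only has to target the Dirichlet trace: the paper takes $g_h$ equal to $v_h$ at the nodes on $\tGD$ and zero at all other nodes, imports the lower bound (\ref{PFArt14a}) from the penalty-free analysis of~\cite{collins2023penalty}, and uses Assumption~\ref{AssForPenFree} only to pass from $\tn$ to $\bs{n}$, absorbing the tangential remainder $\nabla v_h\cdot\tilde{\bs{\tau}}\,(\tilde{\bs{\tau}}\cdot\bs{n})$ (note $g_h=v_h$ on $\ti{e}$, so this term involves $v_h$, not $g_h$) via Young's inequality---a contamination your sketch does not account for. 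With (\ref{termc4}) in hand, your third step goes through essentially verbatim; without it, your two-constraint supremizer remains unproven and, as stated, is unlikely to close.
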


\begin{proof}

\noindent Taking $w_h=v_h$, we immediately obtain the coercivity of $a_h$ with respect to the $H^1$ seminorm:
  \begin{eqnarray*}
    a_h (v_h, v_h) \geqslant | v_h |_{1, \Omega}^2 \; , \qquad  \forall v_h \in V_h \; .
  \end{eqnarray*}
  Now, for any $v_h \in V_h$, introduce $g_h$ as the piecewise-linear function
  on $\tilde{\mathcal{T}}_h$ taking the same values as $v_h$ at the boundary
  nodes on $\tGD$ and set to zero on at all the other nodes of the mesh.
  Reexamining the proof of~\cite[Lemma 1]{collins2023penalty}, we observe that, for any boundary edge $\ti{e} \subset \tGD$,
  \begin{equation}\label{PFArt14a}
	\langle \nabla g_h \cdot \tilde{\bs{n}} \,,
	\, \oS (v_h) \rangle_{\ti{e}} \geqslant \frac{c_1}{h} \|v_h
	\|_{0, \tilde{e}}^2 - c_2 |v_h |_{1, T_{\tilde{e}}}^2
	\; ,
\end{equation}
where $T_{\tilde{e}}$ denotes the element of $\tilde{\mathcal{T}}_h$ attached to $\tilde{e}$.  This bound corresponds to equation~(14a) in the proof of~\cite[Lemma 1]{collins2023penalty}, taken in an element-wise version (prior to summing over all the boundary edges). In order to adapt (\ref{PFArt14a}) to the needs of the current article, in particular to pass from the normal vector $\tilde{\bs{n}}$ to $\bs{n}$, we need to invoke Assumption \ref{AssForPenFree}. Introducing the unit tangent vector $\tilde{\bs{\tau}}$ on $\tilde{e}$ alongside the normal vector $\tilde{\bs{n}}$, we can derive from (\ref{PFArt14a}):
\begin{multline}\label{PFstep1}
	\langle \nabla g_h \cdot \bs{n} \,, \, \oS
	(v_h) j_{\tilde{e}} \rangle_{\ti{e}} = \langle \nabla g_h \cdot
	\tilde{\bs{n}} (\tilde{\bs{n}} \cdot \bs{n})
	\,, \, \oS (v_h) j_{\tilde{e}} \rangle_{\ti{e}}
	+ \langle \nabla g_h \cdot \tilde{\bs{\tau}}
	(\tilde{\bs{\tau}} \cdot \bs{n}) \,,
	\, \oS (v_h) j_{\tilde{e}} \rangle_{\ti{e}} 
	\\
	\geqslant 
	c_D \frac{c_1}{h} \|v_h \sqrt{j_{\tilde{e}}} \|_{0, \tilde{e}}^2 
	- 
	c_D c_2 C_D
	|v_h |_{1, T_{\tilde{e}}}^2 
	- \left|\langle \nabla v_h	\cdot \tilde{\bs{\tau}} (\tilde{\bs{\tau}} \cdot \bs{n})
	\,, \, \oS (v_h) j_{\tilde{e}} \rangle_{\ti{e}} \right| 
	\; .
\end{multline}
In the last term above we have replaced $\nabla g_h \cdot
\tilde{\bs{\tau}}$ by $\nabla v_h \cdot \tilde{\bs{\tau}}$ since
$g_h = v_h$ on $\tilde{e}$. By scaling arguments and Young inequality, we
can further bound this term as
\begin{equation}\label{PFstep2}
	\left|\langle \nabla v_h \cdot \tilde{\bs{\tau}} (\tilde{\bs{\tau}}
	\cdot \bs{n}) \,, \, \oS (v_h)j_{\tilde{e}} \rangle_{\ti{e}} \right|
	\leqslant  \frac{c}{\sqrt{h}} |v_h |_{1,T_{\tilde{e}}} \| \oS (v_h) \|_{0,j[\ti{e}]} 
	\leqslant \frac{C}{h} |v_h |_{1, T_{\tilde{e}}}^2 + c_D
	\frac{c_1}{4h} \| \oS (v_h) \|_{0,j[\ti{e}]}^2
\end{equation}
with $h$-independent constants $c$ and $C$. Similarly,
\begin{equation}\label{PFstep3}
	\|v_h \|_{0,j[\ti{e}]}^2 \geqslant \left( \| \oS (v_h) \|_{0,j[\ti{e}]} -\| \left( \oS (v_h) - v_h \right) \|_{0,j[\ti{e}]} \right)^2 
	\geqslant \frac{3}{4} \| \oS (v_h) \|_{0,j[\ti{e}]}^2 - Ch |v_h |_{1, T_{\tilde{e}}}^2
	\; .
\end{equation}
Putting (\ref{PFstep2}) and (\ref{PFstep3}) into (\ref{PFstep1}) and summing over $\ti{e} \subset \tGD$ we arrive at
\begin{equation}\label{FPstep4}
	\sum_{\ti{e} \subset \tGD} \langle \oS (\nabla g_h) \cdot \bs{n}
	\,, \, \oS (v_h) \, j_{\ti{e}}
	\rangle_{\ti{e}} 
	\geqslant \frac{\tilde{c}_1}{h} \| \oS(v_h) \|_{0,\tGD^{[j]}}^2 
	- \tilde{c}_2 |v_h |_{1,\tilde{\Omega}}^2
\end{equation}
with $\tilde{c}_1 = c_D c_1 / 2$ and $\tilde{c}_2$ regrouping the constants
in \eqref{PFstep1}, \eqref{PFstep2} and \eqref{PFstep3}.

We shall also need to bound some norms of $g_h$ by those of $v_h$. To this end, we start from the following bound, analogous to equation~(14b) in the proof of~\cite[Lemma 1]{collins2023penalty}: 
\begin{equation}\label{PFArt14b}
	|g_h |_{1, \Omega}^2 + \frac{1}{h} \sum_{\ti{e} \subset \tGD} \|\oS(g_h) \|_{0,j[\ti{e}]}^2
	\leqslant \frac{c_3}{h} \sum_{\ti{e} \subset \tGD} \|v_h \|_{0,j[\ti{e}]}^2 
	\; .
\end{equation}
The original version of this bound is slightly different and does not include the factors $j_{\tilde{e}}$, nor the shift operator $\oS$. The present version is easily proven by scaling, having in mind Assumption \ref{AssForPenFree}. 
  Note that
\begin{eqnarray}
	\label{termc4} \frac{1}{h} {\left\| \jumpb{g_h}
		\right\|_{0,\mathcal{E}^{\tmop{ext} ; o}}^2}  \leqslant c_4 | g_h |_{1,\Omega}^2
		\; .
\end{eqnarray}
Indeed, on any exterior (fictitious) edge $e_a$ adjacent to a boundary node
$a$, so that $e_a$ is shared by extensions of interior mesh triangles, say
$T_1, T_2$, we have
\begin{eqnarray*}
	| \nabla g_h |_{T_1} - \nabla g_h |_{T_2}  |
	\leqslant \frac{c_4}{h} | g_h |_{1, T_1 \cup T_2}^2
\end{eqnarray*}
with an $h$-independent constant $c_4$.
Since $\jumpb{g_h} (a) = 0$, we can estimate
\begin{eqnarray*}
	\left\| \jumpb{g_h} \right\|_{e_a}^2 \leqslant h^2  \| \nabla g_h |_{T_1}
	 - \nabla g_h |_{T_2} \|_{e_a}^2 \leqslant c_4 h | g_h |_{1, T_1 \cup T_2}^2
	 \; .
\end{eqnarray*}
Summing this over all such edges gives (\ref{termc4}). Thanks to (\ref{termc4}) and (\ref{PFstep3}) taken in the opposite sense (still valid by scaling), we can rewrite (\ref{PFArt14b}) as
\begin{eqnarray}\label{majgh}
	\interleave g_h \interleave_h^2 + h \|\avgb{\nabla g_h}\|_{\mathcal{E}^{\tmop{ext} ; o}}^2 
	\leqslant \frac{\tilde c_3}{h} \sum_{\ti{e} \subset \tGD} \|\oS(v_h) \|_{0,j[\ti{e}]}^2
	\; . 
\end{eqnarray}

  Take any $\lambda > 0$, and observe, using (\ref{FPstep4}) and (\ref{majgh}),
  \begin{align*}
    a_h (v_h + \lambda g_h, v_h)
    &\geqslant \;
     (1 - \tilde c_2 \lambda) | v_h |_{1,\Omega}^2 + \tilde c_1  \frac{\lambda}{h} \|\oS(v_h) \|_{0,\tGD^{[j]}}^2
     +\lambda (\nabla g_h, \nabla v_h)_\Omega \\
		&\phantom{\geqslant} \;    - \lambda \, \langle \jumpb{g_h} \,, \, \avgb{\nabla v_h} \rangle_{\mathcal{E}^{\tmop{ext} ; o}}
    + \lambda \, \langle \jumpb{v_h} \,, \, \avgb{\nabla g_h} \rangle_{\mathcal{E}^{\tmop{ext} ; o}}
    - \lambda \sum_{\ti{e} \subset \tGD} \langle \oS (g_h) \,, \oS (\nabla v_h) \cdot \bs{n} \, j_{\ti{e}} \rangle_{\ti{e}}
    \nonumber \\
    &\geqslant \;
    (1 - \tilde c_2 \lambda) | v_h |_{1, \Omega}^2
    + \tilde c_1 \lambda \frac{1}{h} {\| \oS(v_h) \|_{0,\tGD^{[j]}}^2} \\
    &\phantom{\geqslant} \;   - \lambda \sqrt{\frac{\tilde c_3}{h}} {\|\oS(v_h)  \|_{0,\tGD^{[j]}}}  \times 
    \nonumber \\
    & \phantom{\geqslant} \; \qquad \times
    \left(| v_h |_{1, \Omega}^2 + 
      h\| \avgb{\nabla v_h} \|_{\mathcal{E}^{\tmop{ext} ; o}}^2
    + \frac 1h \| \jumpb{v_h} \|_{\mathcal{E}^{\tmop{ext} ; o}}^2
    + h\| \oS (\nabla v_h) \cdot \bs{n} \|^2_{0,\tGD^{[j]}} \right)^{1/2}
    \; .
  \end{align*}
  All the terms in the parentheses in the last line can be bounded by $C_I^2|v_h|_{1,\Omega}^2$ with an $h$-independent constant $C_I$ thanks to trace inverse inequalities and, in particular, inequality (\ref{termc4}) applied to $v_h$ instead of $g_h$. This
  gives with the help of Young inequality for any $\varepsilon > 0$, followed again by (\ref{termc4}),  
  \begin{align*}
    a_h (v_h + \lambda g_h, v_h) &\geqslant \left( 1 - \tilde c_2 \lambda - \frac{C_I^2
    \lambda}{2 \varepsilon} \right) | v_h |_{1, \Omega}^2 + \left( \tilde c_1 -
    \frac{\tilde c_3 \varepsilon}{2} \right) \lambda \frac{1}{h} {\| \oS(v_h) \|_{0,\tGD^{[j]}}^2} \\
    &\geqslant \left( \frac 12 - \tilde c_2 \lambda - \frac{C_I^2\lambda}{2 \varepsilon} \right) | v_h |_{1, \Omega}^2 
    + \frac{1}{2 c_4 h} {\left\| \jumpb{v_h} \right\|_{0,\mathcal{E}^{\tmop{ext} ; o}}^2}
    + \left( \tilde c_1 - \frac{\tilde c_3 \varepsilon}{2} \right) \lambda \frac{1}{h} {\| \oS(v_h) \|_{0,\tGD^{[j]}}^2} \\
    & \geqslant c_5 \interleave v_h \interleave_h^2
  \end{align*}
  with
\begin{eqnarray*}
	c_5 = \min \left( \frac{1}{2} - \tilde c_2 \lambda - \frac{C_I^2 \lambda}{2
		\varepsilon} ,
	\frac{1}{2 c_4}, \left( \tilde c_1 - \frac{\tilde c_3 \varepsilon}{2} \right) \lambda
	\right)
\end{eqnarray*}
assuming that $\varepsilon$ and $\lambda$ are chosen small enough so that $c_5 > 0$.
Having fixed $\lambda$ as above, we deduce from (\ref{majgh})
  \begin{eqnarray*}
    \interleave v_h + \lambda g_h \interleave_h \leqslant c_6 \interleave v_h
    \interleave_h
    \; .
  \end{eqnarray*}
  This gives the announced inf-sup with $\alpha = c_5 / c_6$ taking $w_h = v_h
  + \lambda g_h$.
\end{proof}

\begin{thm}[\textbf{convergence in the natural norm}]
  Let the parameters $\theta, \gamma$ be chosen either as
  in Lemma~\ref{LemCoer} or in Lemma~\ref{LemInfSup}. Suppose also that $f
  \in H^1 (\Omega)$ and $u \in H^2 (\Omega)$. 
  Then the following $H^1$-error
  estimate holds:
  \[ | u - u_h |_{1, \Omega} \leqslant \interleave u - u_h \interleave_h \leqslant  Ch (|u|_{2, \Omega} +\|f\|_{1, \Omega}) \; .
  \]
\end{thm}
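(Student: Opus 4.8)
The plan is to run a Strang-type (variational-crime) argument. Subtracting the discrete problem \eqref{weakSBM} from the consistency identity \eqref{weakFormh} satisfied by the exact solution gives, for every $v_h\in V_h$,
\[ a_h(v_h, u - u_h) = (v_h, f)_\Omega - (v_h, f)_{\tO} - \sum_{\ti e\subset\tG}\langle v_h, f\,H_{\ti e}\rangle_{\ti e} =: r_h(v_h), \]
so that $r_h(v_h) = (v_h, f)_{\ti{\mathcal{T}}_h^{\tmop{ext}}} - \sum_{\ti e}\langle v_h, f\,H_{\ti e}\rangle_{\ti e}$ is exactly the consistency defect left by the biased one-point quadrature of the forcing over the gap. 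I would then fix a quasi-interpolant $u_I\in V_h$ of $u$ (e.g.\ the Scott--Zhang interpolant on $\ti{\mathcal{T}}_h$, with its canonical affine extension $u_I^{\tmop{ext}}$ on $\ti{\mathcal{T}}_h^{\tmop{ext}}$) and split $u - u_h = \eta + e_h$ with $\eta := u - u_I$ and $e_h := u_I - u_h\in V_h$.

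The stability of Lemma~\ref{LemCoer} (coercive case) or Lemma~\ref{LemInfSup} (penalty-free case) controls the discrete part $e_h$: using linearity in the second argument and the near-orthogonality above,
\[ a_h(w_h, e_h) = a_h(w_h, u_I - u) + a_h(w_h, u - u_h) = -a_h(w_h, \eta) + r_h(w_h), \]
so that taking $w_h = e_h$ (coercive case) or the supremum over $w_h$ (inf-sup case) yields $\interleave e_h\interleave_h \lesssim \sup_{w_h\neq 0}\big(|a_h(w_h,\eta)| + |r_h(w_h)|\big)/\interleave w_h\interleave_h$. Two ingredients remain: a boundedness estimate $|a_h(w_h,\eta)|\le C\,\interleave w_h\interleave_h\,\|\eta\|_\ast$ in an augmented norm $\|\cdot\|_\ast$ that adds to the triple norm \eqref{triple} the weighted traces $h^{1/2}\|\avgb{\nabla\eta}\|_{0,\mathcal{E}^{\tmop{ext};o}}$ and $h^{1/2}\|\oS(\nabla\eta)\cdot\bs n\|_{0,\tGD^{[j]}}$; and an interpolation estimate $\|\eta\|_\ast\le Ch\,|u|_{2,\Omega}$. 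The $w_h$-factors are handled exactly as in the proof of Lemma~\ref{LemCoer}, by the trace-inverse inequalities on discrete functions.

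The interpolation estimate is where the geometry of the gap enters. On interior elements $\eta$ is treated by classical $\mathcal{P}^1$ theory. On a gap quadrilateral $T_{\ti e}^{\tmop{ext}}$ I would work on the patch $\ti T_{\ti e}\cup T_{\ti e}^{\tmop{ext}}$: after extending $u$ to an $H^2$ function on a neighbourhood of $\Omega$, the fact that $u_I^{\tmop{ext}}$ is the affine interpolant of $u$ at the vertices of $\ti T_{\ti e}$, together with a Bramble--Hilbert / scaling argument on this $O(h)$-diameter patch (non-degenerate thanks to shape regularity and $|\bs d|=O(h)$, which follows from Assumption~\ref{assu:je_bound}), gives $h^{-1}\|\eta\|_{0,T_{\ti e}^{\tmop{ext}}} + |\eta|_{1,T_{\ti e}^{\tmop{ext}}}\le Ch\,|u|_{2,\ti T_{\ti e}\cup T_{\ti e}^{\tmop{ext}}}$. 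Because $\oS$ is read as the evaluation \eqref{eq:new_shift}, the shifted traces $\oS(\eta)$ and $\oS(\nabla\eta)\cdot\bs n$ are the traces of $\eta^{\tmop{ext}}$ and $\nabla\eta^{\tmop{ext}}\cdot\bs n$ on $\G$ pulled back by $M_h$; a trace inequality on $T_{\ti e}^{\tmop{ext}}$, the interpolation bounds above, and the uniform control of $j_{\ti e}$ (Assumptions~\ref{assu:je_bound} and, in the penalty-free case, \ref{AssForPenFree}) give $\|\oS(\eta)\|_{0,\tGD^{[j]}}\le Ch^{3/2}|u|_{2,\Omega}$ and $\|\oS(\nabla\eta)\cdot\bs n\|_{0,\tGD^{[j]}}\le Ch^{1/2}|u|_{2,\Omega}$. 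Summing over all edges yields $\|\eta\|_\ast\le Ch\,|u|_{2,\Omega}$.

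The \textbf{main obstacle} is the consistency defect $r_h$, which has no counterpart in conforming Nitsche/DG analysis: here there is no exact Galerkin orthogonality, and the left-node rule $\int_{T^{\tmop{ext}}} v_h f\approx \langle v_h, f\,H_{\ti e}\rangle_{\ti e}$ is not even exact for affine $v_h$ and constant $f$. I would estimate it elementwise by writing the integrand as its value on $\ti e$ plus a correction of size $O(|\bs d|)$ controlled by $\nabla(v_h f)\cdot\bs\nu = (\nabla v_h\cdot\bs\nu)f + v_h(\nabla f\cdot\bs\nu)$; the second summand is precisely why $f\in H^1(\Omega)$ is assumed. A scaling argument on each $T_{\ti e}^{\tmop{ext}}$, Cauchy--Schwarz, and a discrete trace/inverse inequality to pass from edge to element norms of $v_h$ then give $|r_h(v_h)|\le Ch\,\|f\|_{1,\Omega}\,\interleave v_h\interleave_h$. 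Combining the three bounds yields $\interleave e_h\interleave_h\le Ch(|u|_{2,\Omega}+\|f\|_{1,\Omega})$, and the triangle inequality with $\interleave\eta\interleave_h\le\|\eta\|_\ast\le Ch|u|_{2,\Omega}$ and $|u-u_h|_{1,\Omega}\le\interleave u-u_h\interleave_h$ completes the proof.
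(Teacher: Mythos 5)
Your proposal is correct and follows essentially the same route as the paper: the identical ``modified Galerkin orthogonality'' for the gap-quadrature defect, the same Strang-type skeleton (stability from Lemma~\ref{LemCoer} or Lemma~\ref{LemInfSup}, continuity against the interpolation error, Bramble--Hilbert, triangle inequality), and the same $O(h)$ defect bound hinging on $f \in H^1(\Omega)$, $H_{\ti{e}} \leqslant Ch$, and a Poincar\'e-type inequality to pass from $\|v_h\|_{1,\Omega}$ to $\interleave v_h \interleave_h$. Your two local variations are equivalent in substance rather than different in route: you bound the defect by a Taylor/fundamental-theorem expansion along $\bs{\nu}$ where the paper cancels the edge averages $\bar{v}_h \bar{f}$ (exploiting exactness of the rule for constants, since $H_{\ti{e}} = |T_{\ti{e}}^{\tmop{ext}}|/|\ti{e}|$), and you make explicit, through the augmented norm $\|\cdot\|_\ast$ with the weighted traces $h^{1/2}\|\avgb{\nabla \eta}\|_{0,\mathcal{E}^{\tmop{ext};o}}$ and $h^{1/2}\|\oS(\nabla \eta)\cdot\bs{n}\|_{0,\tGD^{[j]}}$, the continuity step the paper dispatches as ``evident by Cauchy--Schwarz and scaling'' --- a welcome precision, since the bare triple norm \eqref{triple} does not control these terms for the nonpolynomial argument $\eta = u - I_h u$.
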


\begin{proof}

\noindent Subtracting~\eqref{weakFormh} from \eqref{weakSBM} yields the ``modified Galerkin orthogonality'' statement
  \begin{equation}
    \label{GalOrtho} a_h (v_h, u - u_h)
    \;=\;
    \sum_{\ti{e} \subset \tG} \left(
    (v_h, f)_{T_{\ti{e}}^{\tmop{ext}}} - \left\langle v_h, f \, \right\rangle_{\ti{e}} \, H_{\ti{e}}
	\right) \; ,
	\qquad \forall v_h \in V_h
  \; .
  \end{equation}
  Introducing, on any edge $\tilde{e}$, the averages $\bar{f}$ and $\bar{v}_h$
  of $f$ and $v_h$ on that edge, we have that $\int_{\ti{e}} f = \bar{f} \, \left|
  \ti{e} \right|$ and $\int_{\ti{e}} v_h = \bar{v}_h \, \left| \ti{e}
  \right|$, and we continue the above calculations as
  \begin{align}
     a_h (v_h, u - u_h)
     & = \;
     \sum_{\ti{e} \subset \tG} \left(
     (v_h - \bar{v}_h, f)_{T_{\ti{e}}^{\tmop{ext}}} + (\bar{v}_h, f - \bar{f})_{T_{\ti{e}}^{\tmop{ext}}} +
     (\bar{v}_h, \bar{f})_{T_{\ti{e}}^{\tmop{ext}}} - \left\langle v_h - \bar{v}_h, f
     \, \right\rangle_{\ti{e}} \, H_{\ti{e}} -
     \bar{v}_h \bar{f} \, |T_{\ti{e}}^{\tmop{ext}} |
     \right)
     \nonumber \\
     & = \;
     \sum_{\ti{e} \subset \tG} \left( (v_h - \bar{v}_h, f)_{T_{\ti{e}}^{\tmop{ext}}} +
     (\bar{v}_h, f - \bar{f})_{T_{\ti{e}}^{\tmop{ext}}} - \left\langle v_h - \bar{v}_h,
     f \, \right\rangle_{\ti{e}} \, H_{\ti{e}} \right)
     \nonumber \\
     & \leqslant \;
     Ch |v_h |_{1, \Omega} \|f\|_{0,\Omega} + Ch \|v_h \|_{0,\Omega}
     |f|_{1, \Omega} + Ch^2  |v_h |_{1, \tG}
     \|f\|_{\tG}
     \; .
  \end{align}
  To derive the last bound, we have used the following inequalities involving any boundary edge $\ti{e} \in \tG$, shared by the element $\tilde{T}_{\ti{e}} \in \ti{\cT}_h$ and the (curved) quadrilateral $T_{\ti{e}}^{\tmop{ext}}$ (see Fig.~\ref{fig:SBM3} for an illustration):
  \begin{equation}
    \|v_h - \bar{v}_h \|_{T_{\ti{e}}^{\tmop{ext}}} \leqslant Ch |v_h |_{1, \tilde{T}_{\ti{e}}} \; ,
    \qquad
    \| \bar{v}_h \|_{T_{\ti{e}}^{\tmop{ext}}} \leqslant C \|v_h \|_{\tilde{T}_{\ti{e}}} \; ,
    \label{LocEstFatBan}
  \end{equation}
  \begin{equation}
  \|f - \bar{f} \|_{T_{\ti{e}}^{\tmop{ext}}} \leqslant Ch |f|_{1, \tilde{T}_{\ti{e}} \cup T_{\ti{e}}^{\tmop{ext}}}
  \; , \qquad
  \|v_h - \bar{v}_h \|_{\tilde{e}} \leqslant Ch |v_h |_{1, \tilde{e}} \; .
  \end{equation}
  All of these can be easily proven by scaling arguments and Poincar{\'e}-type
  inequalities. We have also used the fact that $H_{\ti{e}}
  \leqslant Ch$. Now, adding to these the trace inverse inequality
  $|v_h |_{1,  \tG} \leqslant \frac{C}{\sqrt{h}} |v_h |_{1,\tilde{\Omega}_h}$
  and the trace inequality
  $\|f\|_{\tG} \leqslant \|f\|_{1, \tilde{\Omega}_h}$
  yields
  \begin{equation}\label{ahNonConf} a_h (v_h, u - u_h) \leqslant Ch \| v_h \|_{1, \Omega} \|f\|_{1, \Omega}
     \leqslant Ch \interleave v_h \interleave_h \|f\|_{1, \Omega} \; .\end{equation}
  The passage from the full $H^1$ norm of $v_h$ to its triple norm in the last line, is justified by the Poincar\'e-type inequality (valid since $\Gamma_D$ is assumed of positive measure): for any $v\in H^1(\Omega))$
  $$
    \| \, v \, \|_{0, \Om} \leq C_P \, \left( \| \, \nabla v \, \|_{0,\Om} + |\Gamma_D|^{-1/2} \| \, v \, \|_{0, \Gamma_D} \right) \; .
  $$
 From this and the fact that the boundary term on $\tGD$ in the triple norm (\ref{triple})  can be interpreted as the norm on $\Gamma_D$ when applied to $v_h\in\tilde V_h^{\tmop{ext}}(\Omega)$, we deduce that
  \begin{equation}
\label{eq:Poincare1bis}
 \| \, v_h \, \|_{0, \Om} \leq C_P \left( \| \, \nabla v_h \, \|_{0,\Om} + h^{-1/2} \| \,  \, v_h \, \|_{0, \tGD} \right) 
  \leq C_P \interleave v_h \interleave_h\; .
\end{equation}
  Introducing the interpolant $I_h u \in V_h$ (constructed by nodal interpolation over
  the mesh $\ti{\cT}_h$ inside $\ti{\Omega}_h$), we derive from (\ref{ahNonConf}) using either the coercivity of the form $a_h$ from Lemma \ref{LemCoer}, or the inf-sup property from Lemma \ref{LemInfSup}, depending on the choice of parameters $\theta$, $\gamma$:
  \begin{equation}
    \label{ahCea} 
    \alpha \interleave I_h u - u_h  \interleave_h
    \leq \sup_{w_h \in V_h}  \frac{a_h (w_h, I_h u - u_h)}{\interleave 	w_h \interleave_h}
    \leq \sup_{w_h \in V_h}  \frac{a_h (w_h, u- I_h u)}{\interleave 	w_h \interleave_h}
    + Ch \|f\|_{1, \Omega} 
    \; .
  \end{equation}
  Using the continuity of the form $a_h$ in the triple norm (evident by Cauchy-Schwarz and scaling), we arrive at:
  \[ \interleave I_h u - u_h \interleave_h \leqslant Ch\|f\|_{1,
     \Omega} + C \interleave u - I_h u \interleave_h \; . \]
  By the Bramble--Hilbert lemma~\cite{bramble1970estimation,bramble1971bounds} and scaling arguments, we have
  \[ \interleave u - I_h u \interleave_h \leqslant C | u - I_h u |_{1, \Omega}
     \leqslant Ch | u |_{2, \Omega} \; , \]
  This allows us to obtain the announced error estimate, thanks to the triangle inequality
  \begin{equation}
    \label{AprioTrip} \interleave u - u_h \interleave_h \leqslant \interleave
    u - I_h u \interleave_h + \interleave I_h u - u_h \interleave_h 
    \leqslant Ch\|f\|_{1,\Omega} + C \interleave u - I_h u \interleave_h
    \leqslant Ch (| u |_{2, \Omega} +\|f\|_{1, \Omega}) \; .
  \end{equation}
\end{proof}

We now turn to the $L^2$ error estimate, and we initially focus on the symmetric interior penalty method (i.e. for $\theta = 1$ and $\gamma$ sufficiently large).
This variant possesses the adjoint consistency property required for optimal convergence.
To treat the errors introduced by the approximations in the gap $\Omega \setminus \tO$, we enlarge it to the set $\Omega_h^{\Gamma, \tmop{fat}}$, which contains $\Omega \setminus \tO$ together with all elements $\ti{T} \in \ti{\mathcal{T}}_h$ adjacent to $\tG$. 
Note that $\Omega_h^{\Gamma, \tmop{fat}}$ is a thin layer of width of order $h$, a fact that will be used via the following lemmas:

\begin{lemma}\label{lemma:3}
  \label{Lemma2}For any $u \in H^1 (\Omega)$
  \begin{eqnarray*}
    \|u\|_{0, \Omega_h^{\Gamma, \tmop{fat}}} \leqslant C \sqrt{h} \|u\|_{1,\Omega} \; .
  \end{eqnarray*}
\end{lemma}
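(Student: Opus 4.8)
The plan is to reduce the claim to a more precise \emph{thin-strip} inequality and then combine it with the trace theorem. First I would observe that both the gap $\Om \setminus \tO$ and the layer of mesh elements adjacent to $\tG$ have width (respectively diameter) of order $h$: points in the gap lie within $|\bs{d}| \le Ch$ of $\G$, and points of the adjacent elements lie within $h$ plus the gap width of $\G$. Hence $\Omega_h^{\Gamma, \tmop{fat}} \subseteq S_{C_0 h}$, where
\[
  S_\delta \; := \; \{ \bs{x} \in \Om : \tmop{dist}(\bs{x}, \G) < \delta \} \; ,
\]
with $C_0$ depending only on the mesh regularity and on the bound on $|\bs{d}|$. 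It therefore suffices to bound $\|u\|_{0, S_{C_0 h}}$, and I would establish for small $\delta$ the estimate
\[
  \| u \|_{0, S_\delta}^2 \; \leqslant \; C \bigl( \delta \, \| u \|_{0, \G}^2 + \delta^2 \, | u |_{1, S_\delta}^2 \bigr) \; .
\]

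To prove this thin-strip inequality I would use the tubular-neighborhood change of variables $\Phi(\bs{y}, t) = \bs{y} - t\, \bs{n}(\bs{y})$, for $\bs{y} \in \G$ and $t \in (0, \delta)$, which is bi-Lipschitz onto $S_\delta$ for $\delta$ sufficiently small (legitimate here because $\G$ is assumed smooth enough to guarantee elliptic regularity), with Jacobian bounded above and below by positive constants. Along each inward normal fibre the fundamental theorem of calculus gives $u(\Phi(\bs{y}, t)) = u(\bs{y}) - \int_0^t \nabla u(\Phi(\bs{y}, \tau)) \cdot \bs{n}(\bs{y}) \, \mathrm{d}\tau$, whence by Cauchy--Schwarz
\[
  | u(\Phi(\bs{y}, t)) |^2 \; \leqslant \; 2 | u(\bs{y}) |^2 + 2 t \int_0^t | \nabla u(\Phi(\bs{y}, \tau)) |^2 \, \mathrm{d}\tau \; .
\]
Integrating in $t$ over $(0, \delta)$ yields $\int_0^\delta | u(\Phi(\bs{y}, t)) |^2 \, \mathrm{d}t \leqslant 2\delta | u(\bs{y}) |^2 + \delta^2 \int_0^\delta | \nabla u(\Phi(\bs{y}, \tau)) |^2 \, \mathrm{d}\tau$; integrating then in $\bs{y}$ over $\G$ and converting both sides back to integrals over $S_\delta$ using the uniform Jacobian bounds produces exactly the thin-strip inequality.

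Finally, I would invoke the standard trace inequality $\| u \|_{0, \G} \leqslant C \| u \|_{1, \Om}$ and use $| u |_{1, S_\delta} \leqslant | u |_{1, \Om}$. With $\delta = C_0 h$ and $\delta^2 \leqslant \delta$ for $h$ bounded (say $h \leqslant 1$), this gives
\[
  \| u \|_{0, \Omega_h^{\Gamma, \tmop{fat}}}^2 \; \leqslant \; \| u \|_{0, S_{C_0 h}}^2 \; \leqslant \; C \bigl( h \, \| u \|_{1, \Om}^2 + h^2 \, | u |_{1, \Om}^2 \bigr) \; \leqslant \; C h \, \| u \|_{1, \Om}^2 \; ,
\]
and taking square roots yields the claim. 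The main obstacle is the geometric step: rigorously justifying the bi-Lipschitz tubular parametrization with uniform Jacobian bounds and the containment $\Omega_h^{\Gamma, \tmop{fat}} \subseteq S_{C_0 h}$, which is precisely where the smoothness of $\G$ and the $O(h)$ width of the gap enter; everything after that reduces to the one-dimensional fibre estimate above. An alternative, purely finite-element route would instead cover the strip by $O(1)$ layers of shape-regular boundary elements and sum scaled local trace and Poincar\'e inequalities, but the tubular-neighborhood argument is cleaner and avoids bookkeeping over the irregular extended elements $T_{\ti{e}}^{\tmop{ext}}$.
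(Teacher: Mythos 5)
Your proposal is correct and is essentially the paper's argument: the paper simply cites \cite{elliott2013} (Lemma 4.10), whose proof foliates the $O(h)$-width band $\Omega_h^{\Gamma, \tmop{fat}}$ by the distance level curves $\mathcal{S}_{\eta}$ and integrates a trace-type inequality in $\eta$ — exactly the tubular-neighborhood/fiber mechanism you spell out, with the curvature (reach) control supplied by the paper's standing smoothness assumption on $\Gamma$ and the containment in an $O(h)$ strip supplied by the $\|\bs{d}\| \leqslant Ch$ bound. The only cosmetic differences are that you anchor a single trace inequality on $\Gamma$ and transport along normal fibres by the fundamental theorem of calculus (which for $u \in H^1(\Omega)$ additionally needs a routine density argument on almost every fibre), whereas the cited proof applies the trace inequality on each leaf $\mathcal{S}_{\eta}$ before integrating.
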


This lemma is proved in \cite[Lemma 4.10]{elliott2013}. The idea of the proof is to cover the band $\Omega_h^{\Gamma, \tmop{fat}}$ by curves $\mathcal{S}_{\eta} = \{x \in \Omega : \tmop{dist} (x, \partial \Omega) = \eta\}$ for $\eta\in(0,\delta)$ with $\delta$ of order $h$, apply the trace inequality on each $\mathcal{S}_{\eta}$, and then integrate on $\eta$.



\begin{lemma}
  \label{Lemma3}For any $u_h \in V_h$
  \begin{eqnarray*}
    \| \nabla u_h \|_{0, \Omega_h^{\Gamma, \tmop{fat}}} \leqslant C \sqrt{h} 
    (\| \nabla u_h \|_{0, \Omega} + |u_h |_{2, h, \Omega})
  \end{eqnarray*}
  with
  \begin{eqnarray*}
    |u_h |_{2, h, \Omega}^2 = \sum_{T \in \ti{\mathcal{T}}_h \cup \mathcal{T}_h^{\tmop{ext}}  } |u_h |_{2, T}^2 +
    \sum_{e \in \ti{\mathcal{E}}^{o} \cup \mathcal{E}^{\tmop{ext};o} } \frac{1}{h} \|[\nabla u_h]\|_{0,e}^2
    \; .
  \end{eqnarray*}
\end{lemma}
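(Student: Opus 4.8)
The plan is to produce a continuous comparison function for $\nabla u_h$ and then invoke Lemma~\ref{Lemma2}. Since $u_h$ is piecewise linear on $\ti{\mathcal{T}}_h \cup \mathcal{T}_h^{\tmop{ext}}$ (each extended element $T^{\tmop{ext}}$ inheriting the constant gradient of its parent triangle), every component $g := \partial_i u_h$ of $\nabla u_h$ is elementwise constant and, in general, discontinuous across the interior edges $\ti{\mathcal{E}}^{o} \cup \mathcal{E}^{\tmop{ext};o}$. Lemma~\ref{Lemma2} cannot be applied to $g$ directly, because $g \notin H^1(\Omega)$; the obstruction is exactly the family of jumps $[\nabla u_h]$. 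First I would introduce an \emph{Oswald-type averaging operator} $\Pi_h$ that maps broken piecewise-linear functions on $\ti{\mathcal{T}}_h \cup \mathcal{T}_h^{\tmop{ext}}$ to globally continuous piecewise-linear functions by assigning to each mesh node the average of the nodal values over the incident elements; in particular $\Pi_h g \in H^1(\Omega)$ (under the identification $\Omega \approx \Omega^h$ of Remark~\ref{rem:edge_ext}).

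Next I would record the two standard scaling estimates for this operator. The approximation bound reads
$$\| g - \Pi_h g \|_{0,\Omega}^2 \le C \sum_{e \in \ti{\mathcal{E}}^{o} \cup \mathcal{E}^{\tmop{ext};o}} h \, \| [g] \|_{0,e}^2 \; ,$$
while, because $\nabla g = 0$ on each element, the gradient of the averaging error satisfies
$$| \Pi_h g |_{1,\Omega}^2 = \| \nabla(\Pi_h g - g) \|_{0,\Omega}^2 \le C \sum_{e \in \ti{\mathcal{E}}^{o} \cup \mathcal{E}^{\tmop{ext};o}} h^{-1} \| [g] \|_{0,e}^2 \; .$$
Since $[g] = [\partial_i u_h]$ and the element contributions $| u_h |_{2,T}$ to $| u_h |_{2,h,\Omega}$ vanish for piecewise-linear $u_h$, summing over the components $i$ shows the right-hand sides are controlled by $C h^2 | u_h |_{2,h,\Omega}^2$ and $C | u_h |_{2,h,\Omega}^2$, respectively.

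With these in hand I would split $\| g \|_{0,\Omega_h^{\Gamma,\tmop{fat}}} \le \| g - \Pi_h g \|_{0,\Omega_h^{\Gamma,\tmop{fat}}} + \| \Pi_h g \|_{0,\Omega_h^{\Gamma,\tmop{fat}}}$. For the first term I would enlarge the domain of integration to all of $\Omega$ and use the approximation bound, obtaining $\| g - \Pi_h g \|_{0,\Omega_h^{\Gamma,\tmop{fat}}} \le \| g - \Pi_h g \|_{0,\Omega} \le C h \, | u_h |_{2,h,\Omega} \le C \sqrt{h} \, | u_h |_{2,h,\Omega}$ for $h$ bounded. For the second term I would apply Lemma~\ref{Lemma2} to $\Pi_h g \in H^1(\Omega)$, giving $\| \Pi_h g \|_{0,\Omega_h^{\Gamma,\tmop{fat}}} \le C \sqrt{h} \, \| \Pi_h g \|_{1,\Omega}$, and then bound $\| \Pi_h g \|_{1,\Omega} \le C(\| \Pi_h g \|_{0,\Omega} + | \Pi_h g |_{1,\Omega}) \le C(\| \nabla u_h \|_{0,\Omega} + | u_h |_{2,h,\Omega})$, using $\| \Pi_h g \|_{0,\Omega} \le \| g \|_{0,\Omega} + \| g - \Pi_h g \|_{0,\Omega}$ together with the two operator estimates. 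Summing the resulting per-component inequalities over $i$ yields the claim.

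The main obstacle I anticipate is the bookkeeping on the hybrid layer $\ti{\mathcal{T}}_h \cup \mathcal{T}_h^{\tmop{ext}}$: the extended cells $T^{\tmop{ext}}$ are (possibly curved) quadrilaterals rather than triangles, so both the construction of $\Pi_h$ and the $h$-independence of the constants in its scaling estimates must be justified on this non-simplicial strip. One must check that the shape-regularity and quasi-uniformity of the underlying $\ti{\mathcal{T}}_h$, together with Assumption~\ref{assu:je_bound} (which controls the aspect ratio of the extensions through the boundedness of $j_{\ti{e}}$), is enough to keep these constants uniform. The second delicate point is verifying that $\Pi_h g$ is genuinely continuous across \emph{both} the surrogate edges and the extended edges, so that $\Pi_h g \in H^1(\Omega)$ and Lemma~\ref{Lemma2} legitimately applies.
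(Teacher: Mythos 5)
Your proposal is correct and follows essentially the same route as the paper: the paper also smooths the broken gradient into a continuous piecewise-linear function (via a Cl\'ement interpolation operator $U_h = \mathcal{I}_h \nabla u_h$ rather than your Oswald nodal averaging, an interchangeable choice satisfying the same two jump-based scaling estimates), then splits by the triangle inequality, enlarges $\|\nabla u_h - U_h\|_{0,\Omega_h^{\Gamma,\tmop{fat}}}$ to all of $\Omega$, and applies Lemma~\ref{Lemma2} to the continuous part. The caveats you flag (uniformity of constants on the hybrid strip of extended quadrilaterals, continuity of the averaged function across both surrogate and extended edges) are precisely what the paper absorbs into its brief ``by scaling arguments,'' so your more explicit treatment is, if anything, more careful.
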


\begin{proof}

\noindent The idea is to apply the preceding lemma to $\nabla u_h$, but the difficulty
  is that $\nabla u_h$ is not in $H^1(\Om)$, because of the discontinuities that arise in the gap $\Omega \setminus \tO$.
  To circumvent this issue, we can construct a continuous counterpart $U_h$ of $\nabla u_h$. For example, we can set
  $U_h =\mathcal{I}_h \nabla u_h$ where $\mathcal{I}_h$ is a Cl{\'e}ment
  interpolation operator to $\mathcal{P}^1$ functions.  Then $\| U_h \|_{0,\Omega} \leqslant C \| \nabla u_h \|_{0,\Omega}$,
      $\| \nabla u_h - U_h \|_{0,\Omega} \leqslant Ch |u_h |_{2, h, \Omega}$ and $\| \nabla U_h
  \|_{0, \Omega} \leqslant C |u_h |_{2, h, \Omega}$ by scaling arguments. 
  We conclude using Lemma~\ref{lemma:3}:
  \begin{eqnarray*}
    \| \nabla u_h \|_{0,\Omega_h^{\Gamma, \tmop{fat}}} \leqslant \| \nabla u_h -
    U_h \|_{0,\Omega_h^{\Gamma, \tmop{fat}}} + \|U_h \|_{0,\Omega_h^{\Gamma,
    \tmop{fat}}} \leqslant \| \nabla u_h - U_h \|_{0,\Omega} + C \sqrt{h} \|U_h
    \|_{1, \Omega} \; .
  \end{eqnarray*}
  With the interpolation estimates from above, this gives
  \begin{eqnarray*}
    \| \nabla u_h \|_{0,\Omega_h^{\Gamma, \tmop{fat}}} \leqslant C \left( h +
    \sqrt{h} \right) (\| \nabla u_h \|_{0,\Omega} + |u_h |_{2, h, \Omega}) \; .
  \end{eqnarray*}
\end{proof}

\begin{thm}[\bf optimal $L^2$-error estimate for the symmetric interior penalty method] \label{thm:opt_L2}
  Suppose $\theta = 1$, $\gamma$ sufficiently large, $f \in H^2 (\Omega)$ and
  $u \in H^2 (\Omega)$. 
  The following $L^2$ error estimate holds then
  \begin{eqnarray*}
    \|u - u_h \|_{0, \Omega} \leqslant Ch^2  (|u|_{2, \Omega} + \nobracket \| f \|_{2, \Omega})
     \; .
  \end{eqnarray*}
\end{thm}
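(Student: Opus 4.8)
The plan is to run an Aubin--Nitsche duality argument, exploiting that for $\theta=1$ the form $a_h$ is symmetric and adjoint consistent (this is precisely why the statement is restricted to the symmetric variant). First I would introduce the dual problem $-\Delta z = e$ in $\Omega$ with $z=0$ on $\G_D$ and $\nabla z\cdot\bs{n}=0$ on $\G_N$, where $e=u-u_h$; by elliptic regularity $\|z\|_{2,\Omega}\leq C\|e\|_{0,\Omega}$. Since this dual problem has the same structure as \eqref{eq:strong} with homogeneous data, its exact solution satisfies $a_h(v_h,z)=(v_h,e)_\Omega$ for every $v_h\in V_h$, exactly as in \eqref{weakFormh}. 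Evaluating $a_h(u,z)$ directly for the two smooth functions $u,z$, all interior jumps vanish and the Dirichlet boundary terms drop because $u=z=0$ on $\G_D$ while $\nabla z\cdot\bs{n}=0$ on $\G_N$, so that $a_h(u,z)=(\nabla u,\nabla z)_\Omega=(u,e)_\Omega$. Taking $v_h=u_h$ in the first identity and combining with the second and with symmetry yields the key relation $\|e\|_{0,\Omega}^2=a_h(u-u_h,z)$.

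Next I would insert the interpolant $I_h z\in V_h$ and split $\|e\|_{0,\Omega}^2 = a_h(u-u_h,\,z-I_h z) + a_h(u-u_h,\,I_h z)$. Using symmetry together with the modified Galerkin orthogonality \eqref{GalOrtho} taken with $v_h=I_h z$, the second term reduces to the pure consistency/quadrature functional $E_f(I_h z):=\sum_{\ti e\subset\tG}\big((I_h z,f)_{T_{\ti e}^{\tmop{ext}}}-\langle I_h z,f\rangle_{\ti e}\,H_{\ti e}\big)$. For the first term I would invoke continuity of $a_h$ in the triple norm \eqref{triple} augmented by the flux contributions $h^{1/2}\|\avgb{\nabla\cdot}\|_{\mathcal{E}^{\tmop{ext};o}}$. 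The mechanism is that $\jumpb{u-u_h}=-\jumpb{u_h}$ and $\jumpb{z-I_h z}=-\jumpb{I_h z}$ are jumps of \emph{discrete} functions, so in each nonconforming pairing one factor carries a jump controlled by $\interleave\cdot\interleave_h$ and the other is a trace of an interpolation error; pairing the natural-norm estimate of the preceding theorem, $\interleave u-u_h\interleave_h\leq Ch(|u|_{2,\Omega}+\|f\|_{1,\Omega})$, with $\interleave z-I_h z\interleave_h\leq Ch\|z\|_{2,\Omega}$ and the trace/interpolation bound $\|\nabla z-\avgb{\nabla I_h z}\|_{\mathcal{E}^{\tmop{ext};o}}\leq Ch^{1/2}\|z\|_{2,\Omega}$ gives $|a_h(u-u_h,z-I_h z)|\leq Ch^2(|u|_{2,\Omega}+\|f\|_{1,\Omega})\|e\|_{0,\Omega}$.

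The crux is to show that $E_f(I_h z)$ is genuinely $O(h^2)$. Writing $v=I_h z$ and $Q_{\ti e}(g):=\int_{T_{\ti e}^{\tmop{ext}}}g-H_{\ti e}\int_{\ti e}g$ (which annihilates constants), I would decompose edge-wise $(v,f)_{T_{\ti e}^{\tmop{ext}}}-\langle v,f\rangle_{\ti e}H_{\ti e}=\bar f_{\ti e}\,Q_{\ti e}(v)+Q_{\ti e}\big(v(f-\bar f_{\ti e})\big)$, where $\bar f_{\ti e}$ is the edge mean of $f$. Since $v$ is affine on $T_{\ti e}^{\tmop{ext}}$ and $Q_{\ti e}$ is exact for constants, the biased one-point rule leaves only a geometric offset of size $O(h)$, giving $|Q_{\ti e}(v)|\leq C h\,|T_{\ti e}^{\tmop{ext}}|\,|\nabla v|$; after Cauchy--Schwarz over edges the first piece is $\leq Ch\,\|\nabla v\|_{0,\Omega_h^{\Gamma,\tmop{fat}}}\,\|f\|_{0,\Omega_h^{\Gamma,\tmop{fat}}}$, while the second is $\leq Ch\,\|v\|_{0,\Omega_h^{\Gamma,\tmop{fat}}}\,\|\nabla f\|_{0,\Omega_h^{\Gamma,\tmop{fat}}}$. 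Here the boundary-layer Lemmas~\ref{lemma:3} and~\ref{Lemma3} are decisive, as each factor living on the band $\Omega_h^{\Gamma,\tmop{fat}}$ contributes a $\sqrt h$: namely $\|v\|_{0,\Omega_h^{\Gamma,\tmop{fat}}},\,\|\nabla v\|_{0,\Omega_h^{\Gamma,\tmop{fat}}}\leq C\sqrt h\,\|z\|_{2,\Omega}$, and, \emph{crucially using $f\in H^2$ so that $\nabla f\in H^1$}, Lemma~\ref{lemma:3} gives $\|f\|_{0,\Omega_h^{\Gamma,\tmop{fat}}}\leq C\sqrt h\|f\|_{1,\Omega}$ and $\|\nabla f\|_{0,\Omega_h^{\Gamma,\tmop{fat}}}\leq C\sqrt h\|f\|_{2,\Omega}$. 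This yields $|E_f(I_h z)|\leq Ch^2\|f\|_{2,\Omega}\,\|z\|_{2,\Omega}$.

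Finally, substituting $\|z\|_{2,\Omega}\leq C\|e\|_{0,\Omega}$ into both bounds gives $\|e\|_{0,\Omega}^2\leq Ch^2(|u|_{2,\Omega}+\|f\|_{2,\Omega})\|e\|_{0,\Omega}$, and dividing by $\|e\|_{0,\Omega}$ produces the claimed estimate. I expect the main obstacle to be exactly the control of $E_f(I_h z)$: reaching full second order requires the $\sqrt h$ smallness of \emph{both} $I_h z$ and $f$ on the boundary layer \emph{together with} the extra regularity $f\in H^2$ (to bound $\nabla f$ on the band through Lemma~\ref{lemma:3}), and it hinges on the observation that although the biased left-node rule is only first-order in general, it is exact on constants, so that $Q_{\ti e}(v)$ is geometrically small for affine $v$. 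A secondary technical point is arranging the augmented continuity norm so that the nonconforming flux terms in $a_h(u-u_h,z-I_h z)$ are paired as jump-of-discrete against trace-of-interpolation-error, rather than naively applying an inverse inequality to the smooth factor $z-I_h z$.
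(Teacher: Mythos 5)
Your proposal is correct and follows essentially the same route as the paper's proof: an Aubin--Nitsche duality argument exploiting the symmetry of $a_h$ for $\theta=1$, the modified Galerkin orthogonality \eqref{GalOrtho} applied with the nodal interpolant of the dual solution, continuity in the triple norm paired with the already-proven $H^1$ estimate, and a decomposition of the quadrature defect via edge means that is resolved by the boundary-layer Lemmas~\ref{Lemma2} and~\ref{Lemma3}, with $f\in H^2$ entering precisely where the paper needs it (to bound $|f|_{1,\Omega_h^{\Gamma,\tmop{fat}}}\leqslant C\sqrt{h}\,\|\nabla f\|_{1,\Omega}$). Your functional $Q_{\ti{e}}$, exact on constants, is merely a repackaging of the paper's splitting through the averages $\bar f$ and $\bar w_h$ on each edge, so the two arguments coincide in substance.
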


\begin{proof}

\noindent We proceed by the Aubin-Nitsche trick. Let $w \in H^2 (\Omega)$ be the solution to
\begin{subequations}
\label{eq:strong_adj}
\begin{align}
- \Delta w 
&=\; 
u - u_h \, , \quad \mbox{in } \Om \; , \\
 w 
&=\; 
0 \, , \ \qquad \quad \mbox{on } \G_D \; , \\ 
\nabla w \cdot \bs{n} 
&=\; 
0 \, , \ \qquad \quad \mbox{on } \G_N \; ,
\end{align}
\end{subequations}
Then
  \begin{eqnarray*}
    \|u - u_h \|_{0, \Omega}^2 = a_h (u - u_h,w) = a_h (w, u - u_h) \; ,
  \end{eqnarray*}
  by the symmetry of $a_h$ for the symmetric variant of the method with $\theta = 1$. 
  Take $w_h = I_h w \in V_h$ as the nodal interpolant of $w$.
  Then, using the ``modified Galerkin orthogonality'' (\ref{GalOrtho}) we rewrite
  \begin{eqnarray*}
    \|u - u_h \|_{0, \Omega}^2 = a_h (w - w_h, u - u_h) 
    + \sum_{\ti{e} \subset \tG} \left(
    (w_h, f)_{T_{\ti{e}}^{\tmop{ext}}} - \left\langle w_h, f \,
    \right\rangle_{\ti{e}} \, H_{\ti{e}}
    \right)
    \; .
  \end{eqnarray*}
  Proceeding as in the proof of the $H^1$ error by introducing the averages
  $\bar{f}$ and $\bar{w}_h$ of $f$ and $w_h$ on every edge $\ti{e}$, we
  continue the above calculations as
  \begin{align*}
    \|u - u_h \|_{0, \Omega}^2 
    &\leqslant\;
     \interleave w - w_h \interleave_h  \interleave u - u_h \interleave_h
    + \sum_{\ti{e} \subset \tG} \left( (w_h - \bar{w}_h, f)_{T_{\ti{e}}^{\tmop{ext}}} +
    (\bar{w}_h, f - \bar{f})_{T_{\ti{e}}^{\tmop{ext}}} \right)
     \nonumber \\
    &\phantom{\leqslant} \;
    - \sum_{\ti{e} \subset \tG} \left( \left\langle w_h - \bar{w}_h, f \, \right\rangle_{\ti{e}} \, H_{\ti{e}} \right)
    \nonumber \\
    &\leqslant\;
    Ch^2  |w|_{2, \Omega} \, (| u |_{2, \Omega} +\|f\|_{1, \Omega})
	+ Ch (|w_h |_{1, \Omega_h^{\Gamma, \tmop{fat}}} \, \| f \|_{0,\Omega_h^{\Gamma, \tmop{fat}}} +
    \| w_h \|_{0,\Omega_h^{\Gamma, \tmop{fat}}} \, |f|_{1, \Omega_h^{\Gamma, \tmop{fat}}} )
    \nonumber \\
    &\phantom{\leqslant} \;
    + Ch^2  |w|_{1, \tG} \|f\|_{0,\tG}
    \; .
  \end{align*}
  Here, we have used the already proven error estimate~\eqref{AprioTrip}, and bounds (\ref{LocEstFatBan}). To gain another $h$ in
  the terms multiplied by the first power of $h$, we use Lemmas \ref{Lemma2} and \ref{Lemma3}$:$
  \begin{align*}
    \|u - u_h \|_{0, \Omega}^2 
    &\leqslant\;
     Ch^2  |w|_{2, \Omega} (| u |_{2, \Omega} +\|f\|_{1, \Omega})
     + Ch^2  ( \nobracket (|w_h |_{1, \Omega } + |w_h |_{2, h,
    \Omega }) \| f \|_{1, \Omega} + |w_h |_{1, \Omega } \| \nabla f \|_{1, \Omega} )
     \nonumber \\
    &\phantom{\leqslant} \;
     + Ch^2  |w_h |_{1, \tG} \|f\|_{0,\tG}
     \; .
  \end{align*}
  By interpolotation estimates, $|w_h |_{1, \tG} + |w_h |_{1,
  \Omega} + |w_h |_{2, h, \Omega} \leqslant C \| w \|_{2, \Omega}$. By the trace
  inequality, $\|f\|_{0,\tG} \leqslant C \|f\|_{1, \Omega}$. Thus,
  the estimate above leads to
  \begin{eqnarray*}
    \|u - u_h \|_{0, \Omega}^2 \leqslant Ch^2  (|u|_{2, \Omega} + \nobracket
    \| f \|_{2, \Omega}) \| w \|_{2, \Omega}
  \end{eqnarray*}
  and we conclude by recalling $\| w \|_{2, \Omega} \leqslant C \|u - u_h
  \|_{0, \Omega}$.
\end{proof}

\begin{rem}
  For other choices of parameters, i.e. $\theta \neq 1$ with $\gamma$
  sufficiently large, or $\theta = - 1, \gamma = 0$, we can prove the
  sub-optimal $L^2$ error estimate
  \begin{eqnarray*}
    \|u - u_h \|_{0, \Omega} \leqslant Ch^{3 / 2}  (|u|_{2, \Omega} +
    \nobracket \| f \|_{2, \Omega}) \; .
  \end{eqnarray*}
  This result should not be surprising, since it also sholds for the unsymmetric Nitsche's method on body-fitted grids.
  To this end, we proceed again by the Aubin-Nitsche trick, starting by
  introducing $w \in H^2 (\Omega)$ as in the proof of Theorem~\ref{thm:opt_L2}. 
  This time, however, the bilinear form is not symmetric, and some adjustments need to be made. Then
  \begin{eqnarray}
  	\label{eq:work_l2subopt}
    \|u - u_h \|_{0, \Omega}^2 
    &=& 
    a_h (u - u_h,w)
    \nonumber \\
    &\phantom{=}& 
    a_h (w, u - u_h)
    - (1 - \theta) \, \langle \jumpb{u - u_h} \,,
    \, \avgb{\nabla w} \rangle_{\mathcal{E}^{\tmop{ext} ; o}} 
    \nonumber \\
    &\phantom{=}& 
    -(1 - \theta) \, \sum_{\ti{e} \subset \tGD} \langle \oS
    (\nabla w) \cdot \bs{n} \,, \, \oS (u - u_h)
    \, j_{\ti{e}} \rangle_{\ti{e}}
    \nonumber \\
    &=& 
    a_h (w - w_h, u - u_h) + \sum_{\ti{e} \subset \tG} (w_h,
    f)_{T_{\ti{e}}^{\tmop{ext}}} - \left\langle w_h, f \,
    \right\rangle_{\ti{e}} \, H_{\ti{e}} + c_{\theta}
    \; ,
  \end{eqnarray}
  where
  $$c_{\theta} = - (1 - \theta) \, \langle \jumpb{u - u_h}
  \,, \, \avgb{\nabla w}
  \rangle_{\mathcal{E}^{\tmop{ext} ; o}} - (1 - \theta) \,
  \sum_{\ti{e} \subset \tGD} \langle \oS (\nabla w) \cdot \bs{n}
  \,, \, \oS (u - u_h) \,
  \, j_{\ti{e}} \rangle_{\ti{e}} \; . $$
  Other than $c_{\theta}$, all the terms in the right-hand side of~\eqref{eq:work_l2subopt},
  can be treated as in the proof of Theorem~\ref{thm:opt_L2}. For $c_{\theta}$,
  we proceed as follows
  \begin{eqnarray*}
    c_{\theta} \leqslant C \sqrt{h} \interleave u - u_h \interleave_h \left(
    \| \nabla w \cdot \bs{n} \|_{0,\tGD} + \left\| \nabla w \cdot
    \bs{n} \right\|_{0,\mathcal{E}^{\tmop{ext} ; o}}  \right) \; .
  \end{eqnarray*}
  To treat the contributions on the edges in $\mathcal{E}^{\tmop{ext};o}$, we apply the following inverse inequality
  \begin{eqnarray*}
    {\| \nabla w \cdot \bs{n} \|_{0,\mathcal{E}^{\tmop{ext} ; o}}} 
    \leqslant C \left( \frac{1}{\sqrt{h}} \left\| \nabla w
    \right\|_{0,\Omega_h^{\Gamma, \tmop{fat}}}  + \sqrt{h} \left| \nabla w
    \right|_{1, \Omega_h^{\Gamma, \tmop{fat}}}  \right) \leqslant C {\|
    \nabla w \|_{1, \Omega }} 
    \; ,
  \end{eqnarray*}
  derived applying Lemma~\ref{Lemma2}, and thinking about the edges in $\mathcal{E}^{\tmop{ext};o}$ as part of quadrilaterals in $\ti{\mathcal{T}}_h^{\tmop{ext}}$.
  Collecting all these contributions and using the estimate for \
  $\interleave u - u_h \interleave_h$ gives
  \begin{eqnarray*}
    \|u - u_h \|_{0, \Omega}^2 \leqslant Ch^2  (|u|_{2, \Omega} + \nobracket
    \| f \|_{2, \Omega}) \| w \|_{2, \Omega} + Ch^{3 / 2}  (|u|_{2, \Omega} +
    \nobracket \| f \|_{1, \Omega}) \left( \| \nabla w \cdot \bs{n}
    \|_{0,\tGD} + \left\| \nabla w \right\|_{1, \Omega}  \right)
  \end{eqnarray*}
  and we conclude by recalling $\| w \|_{2, \Omega} \leqslant C \|u - u_h
  \|_{0, \Omega}$.
\end{rem}

%
%
%

\section{Neumann Boundary Conditions for Linear Elasticity \label{sec:sbm_linela}}

In the numerical tests that follow, we also consider the equations of (compressible) isotropic linear elasticity. Their strong form is given as
\begin{subequations}
	\label{eq:SteadyLinEla}
\begin{align}
- \nabla \cdot \left( \bs{\sigma(\bs{u})} \right) &=\; \bs{b}  \qquad \text{\ \ in \ } \Om \; ,
 \\
\bs{u} &=\;  \bs{u}_D \qquad \! \text{on \ }  \GD \; ,
 \\
\bs{\sigma} \bs{n} &=\;  \bs{t}_N \qquad \text{on \ }  \GN \; ,
\end{align}
\end{subequations}
where $\bs{u}$ is the displacement field, $\bs{u}_D$ its value on the Dirichlet boundary $\GD$, $\bs{t}_N$ the normal traction along the Neumann boundary $\GN$, and $\bs{b}$ a body force.
We of course assume that $\partial \Om = \overline{\GD \cup \GN}$ and $\GD \cap \GN = \emptyset$.
The stress $\bs{\sigma}$ is a linear function of $\bs{u}$, according to the constitutive model
$$\bs{\sigma}(\bs{u}) = 2 \mu \, \bs{\varepsilon}(\bs{u}) + \lambda (\nabla \cdot \bs{u}) \bs{I}\; . $$
The proposed SBM variational form of~\eqref{eq:SteadyLinEla} can be derived in a similar way to the case of the Poisson equation. Using the identities: 
\begin{align}
\jumpb{\bs{\omega}_h} &=\; \bs{\omega}_h^{+} \bs{n}^{+}+\bs{\omega}_h^{-} \bs{n}^{-} \; , \\
\jumpb{\bs{v}_h}_{\otimes} &=\; \bs{v}_h^{+} \otimes \bs{n}^{+}+\bs{v}_h^{-} \otimes \bs{n}^{-} \; , \\
\jumpb{ \bs{\omega}_h \, \bs{v}_h} &=\; \avgb{\bs{\omega}_h} : \jumpb{\bs{v}_h}_{\otimes} + \jumpb{\bs{\omega}_h}\cdot\avgb{\bs{v}_h} \; ,
\end{align}
where $\bs{\omega}_h$ is a second-order tensor function and $\bs{v}_h$ is a vector function. Hence we have:
\begin{multline}
( \bs{\varepsilon}(\bs{w}_h)  \, , \,  2\mu \, \bs{\varepsilon}(\bs{u}_h) )_{\tO}
\, + \, 
( \bs{\varepsilon}(\bs{w}_h)  \, , \, 2\mu \,  \bs{\varepsilon}(\bs{u}_h) )_{\ti{\mathcal{T}}_h^{\tmop{ext}}} 
\, + \, 
( \nabla \cdot \bs{w}_h  \, , \,  \lambda \, \nabla \cdot \bs{u}_h )_{\tO}
\, + \, 
( \nabla \cdot \bs{w}_h  \, , \,  \lambda \, \nabla \cdot \bs{u}_h )_{\ti{\mathcal{T}}_h^{\tmop{ext}}} 
\\
\, - \,
\avg{ \jumpb{\bs{w}_h}_{\otimes} \, , \, \avgb{ \lambda \, \nabla \cdot \bs{u}_h  \bs{I} + 2\mu \,\bs{\varepsilon}(\bs{u}_h) } }_{\mathcal{E}^{\tmop{ext};o} }
\, - \theta \,
\avg{ \avgb{ \lambda \, \nabla \cdot \bs{w}_h  \bs{I} + 2\mu \, \bs{\varepsilon}(\bs{w}_h)} \, , \, \jumpb{\bs{u}_h}_{\otimes} }_{\mathcal{E}^{\tmop{ext};o} }
\\
\; - \;
\avg{ \bs{w}_h \, , \,  (\lambda \, \nabla \cdot \bs{u}_h  \bs{I} + 2\mu \,\bs{\varepsilon}(\bs{u}_h)) \bs{n}  }_{\GD}
\; - \theta \;
\avg{ (\lambda \, \nabla \cdot \bs{w}_h  \bs{I} + 2\mu \,\bs{\varepsilon}(\bs{w}_h)) \bs{n} \, , \,  \bs{u}-\bs{u}_D  }_{\GD}
\\
\; + \;
\langle \gamma \, h^{-1} \jumpb{\bs{w}_h} \,, \, \jumpb{\bs{u}_h} \rangle_{\mathcal{E}^{\tmop{ext};o}}
\; + \;
\langle \gamma \, h^{-1} \bs{w}_h \,, \, \bs{u}_h-\bs{u}_D \rangle_{\GD}
\\
\; = \;
( \bs{w}_h \, , \, \bs{b} )_{\tO}
\, + \, 
( \bs{w}_h \, , \, \bs{b} )_{\ti{\mathcal{T}}_h^{\tmop{ext}}} 
\; + \;
\avg{ \bs{w}_h \, , \, \bs{t}_N  }_{\GN}
\; .
\label{eq:SBM_var_gen2}
\end{multline}
As in the case of the Poisson equation, the variational form ~\ref{eq:SBM_var_gen2} can be approximated as 
\begin{multline}
( \bs{\varepsilon}(\bs{w}_h)  \, , \, 2\mu \,  \bs{\varepsilon}(\bs{u}_h) )_{\tO}
+
( \nabla \cdot \bs{w}_h  \, , \, \lambda \,  \nabla \cdot \bs{u}_h )_{\tO}
\, + \, 
\sum_{\ti{e} \subset \tG}
\bigg(
\avg{ \bs{\varepsilon}(\bs{w}_h)  \, , \, 2\mu \,  \bs{\varepsilon}(\bs{u}_h) \, H_{\ti{e}}  }_{\ti{e}} 
\, + \, 
\avg{ \nabla \cdot \bs{w}_h  \, , \, \lambda \,  \nabla \cdot \bs{u}_h \, H_{\ti{e}}  }_{\ti{e}} 
\bigg)
\\
\; - \; 
\sum_{\ti{\bs{a}} \in \mathcal{N}(\tG)}  
\frac{|\bs{d}_{\ti{\bs{a}}}|}{2} \, 
\bigg(
\jumpb{ \nabla \bs{w}_h \bs{d}_{\ti{\bs{a}}} }_{\otimes; \ti{\bs{a}}}
: 
\avgb{ \lambda \nabla \cdot \bs{u}_h \bs{I} + 2\mu \, \bs{\varepsilon}(\bs{u}_h) }_{\ti{\bs{a}}} 
\; + \theta \; 
\avgb{ \lambda \nabla \cdot \bs{w}_h \bs{I} + 2\mu \, \bs{\varepsilon}(\bs{w}_h) }_{\ti{\bs{a}}} 
: 
\jumpb{ \nabla \bs{u}_h \bs{d}_{\ti{\bs{a}}} }_{\otimes;\ti{\bs{a}}}
\bigg)
\\ 
\; - \;
\sum_{\ti{e} \subset \tGD}
\bigg(
\avg{ \oS(\bs{w}_h) \, , \,  ( (\lambda \nabla \cdot \bs{u}_h \bs{I} + 2\mu \, \bs{\varepsilon}(\bs{w}_h)) \bs{n} ) \, j_{\ti{e}} }_{\ti{e}}
\, + \theta \, 
\avg{ (\lambda \nabla \cdot \bs{u}_h \bs{I} + 2\mu \, \bs{\varepsilon}(\bs{w}_h)) \bs{n} \, , \, (\oS(\bs{u}_h) - \bs{u}_D) \, j_{\ti{e}}  }_{\ti{e}} 
\bigg)
\\ 
\; + \;
\sum_{\ti{\bs{a}} \in \mathcal{N}(\tG)}  
\frac{\gamma}{4 h} \, |\bs{d}_{\ti{\bs{a}}}| \, 
\jumpb{ \nabla \bs{w}_h \bs{d}_{\ti{\bs{a}}} }_{\ti{\bs{a}}}
\cdot 
\jumpb{ \nabla \bs{u}_h \bs{d}_{\ti{\bs{a}}} }_{\ti{\bs{a}}}
+\sum_{\ti{e} \subset \tGD}
\frac{\gamma}{h} \, 
\avg{  \oS(\bs{w}_h) \, , \, (\oS(\bs{u}_h) - \bs{u}_D) \, j_{\ti{e}}  }_{\ti{e}} 
\\ 
  = 
( \bs{w}_h \, , \, \bs{b} )_{\tO}
\, + \, 
\sum_{\ti{e} \subset \tG}
\avg{ \bs{w}_h \, , \, \bs{b} \, H_{\ti{e}}  }_{\ti{e}} 
\, + \, 
\sum_{\ti{e} \subset \tGN}
\avg{ \oS(\bs{w}_h) \, , \, \bs{h}_N(M_h(\ti{\bs{x}})) \, j_{\ti{e}}  }_{\ti{e}} 
\; .
\end{multline}

\begin{figure}[tbh!]
  \centering
    \subcaptionbox{The domain $\Om$ (grey).}
    [.4\textwidth]{\begin{subfigure}[tbh!]{.3\textwidth}
    \centering
    \begin{tikzpicture}
      \filldraw[color=white, fill=white!10, thin](0.0,0.0) rectangle (5,0.5);
      \filldraw[color=black, fill=black!10, thin](0.0,0.5) rectangle (5,5.5);
      \filldraw[color=black, fill=white, thick](3,3) circle (1.25);
      \draw (3,3) -- (4.25,3);
      \node[text width=0.1cm] at (3,3-0.2){$r$};
    \end{tikzpicture}
      \end{subfigure}
    }
    \subcaptionbox{$\tO$ (grey), $\G$ (blue) and $\tG$ (red).}
    [.4\textwidth]{\begin{subfigure}[tbh!]{0.39\textwidth}\centering
    \includegraphics[width=\linewidth]{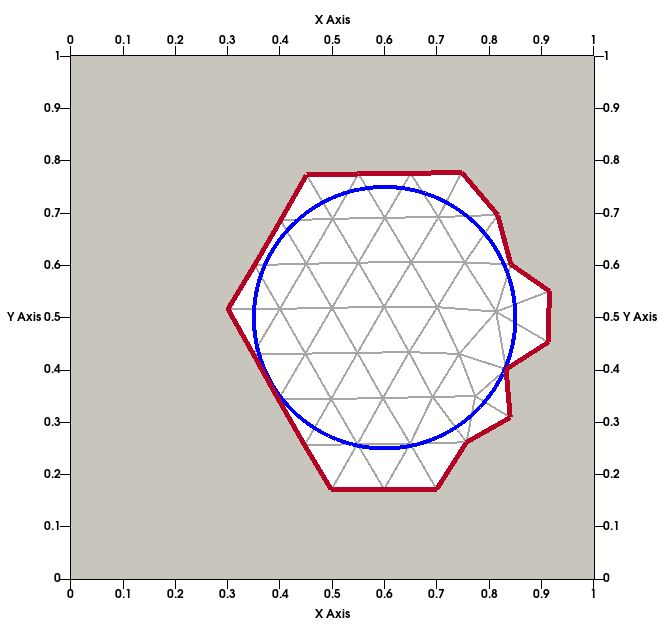}
  \end{subfigure}}
\caption{General problem schematic for the active domain (grey), true boundary (blue), surrogate boundary (red)}
    \label{fig:square}
\end{figure}

\section{Numerical results \label{sec:numerical_results}}

We present the results from a series of two-dimensional numerical experiments that demonstrate the theoretical findings of Section~\ref{sec:theo}. 
Our general approach relies on the method of manufactured solutions and involves embedding geometries (both analytic and polygonal) on a series of meshes of increased refinement.
A schematic representation of the experimental setup is provided in Figure \ref{fig:square}, depicting an embedded circular shape on a unit square domain. 

More specifically, since embedded geometries may arbitrarily intersect the grids, we also apply specific rotations to the latter (with respect to the embedded shapes) to test the effect on the numerical results of such perturbations.
In fact, the newly conceptualized method requires computation of additional geometric quantities on the surrogate boundary (with respect to a traditional SBM).
Grid rotations provide a means to examine a high number of extension arrangements, i.e. robustness.
The convergence of the $L^2$-norm and the $H^1$-seminorm of the error are assessed, along with the condition number, for both the Poisson and linear elasticity equations.

The primary motivator for the proposed method was, by and large, the development of Neumann boundary conditions that guarantee optimal convergence rates.
Although a majority of the experiments involve only embedded Neumann boundaries, the array of tests was expanded to also include Dirichlet boundary conditions. 
The numerical experiments on the Poisson equation encompass both the symmetric and anti-symmetric Nitsche formulations for the weak enforcement of Dirichlet conditions, along with the inclusion of a ``patch test.'' Likewise, a simple bending beam test was performed for linear elasticity, which included both homogeneous displacement and homogeneous traction boundary conditions. Furthermore, optimal convergence was also achieved with quadrilateral elements, demonstrating the flexibility of the method beyond standard triangular finite elements.

\subsection{Patch test for the Poisson problem}
A patch test experiment was performed to assess the ability of the method to match an affine exact solution. In general, passing a patch test does not guarantee convergence nor stability of a numerical method.
Yet, it is an important sanity check for the proposed conceptualization of the SBM, which involves solution extensions and approximate integration over the gap region.
Affine solutions are relevant in engineering applications, since they imply a constant flux scenario in the case of the Poisson problem or a constant strain scenario in the case of linear elasticity.

The proposed SBM variant possesses the partition of unity property and passes the patch test, since affine solutions can be exactly represented in the gap region.
We considered the Poisson problem with three simple geometries (circle, rotated square, and star) embedded on a unit square domain with a regular background mesh, as seen in Figure \ref{fig:patch1}. The solutions are $u = x+y$, $u = x$, and $u = y$ for the circle, rotated square, and star respectively. A Dirichlet boundary condition is strongly enforced on the outer perimeter of the unit square, while the shifted Neumann boundary condition is applied to the surrogate boundary in red. A visualization of the surrogate extensions are provided in Figure \ref{fig:patch2}, along with contours of the nodal error between the approximate and exact solutions $u_h - u_{exact}$. It is easily seen that the numerical error is within machine precision.

\begin{figure}[tbh!]
  \centering
  \begin{subfigure}[t]{0.31\textwidth}
    \includegraphics[width=\linewidth]{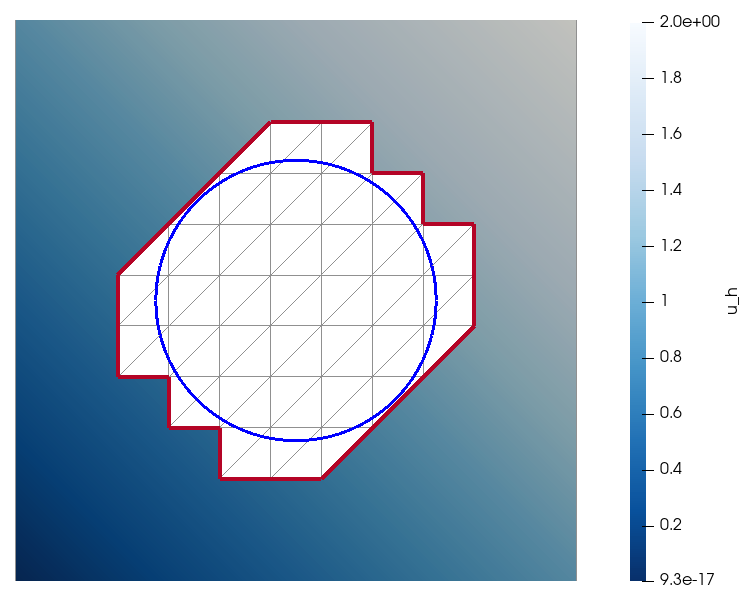}
    \caption{$u = x+y$, }
  \end{subfigure}
\begin{subfigure}[t]{0.31\textwidth}\centering
    \includegraphics[width=\linewidth]{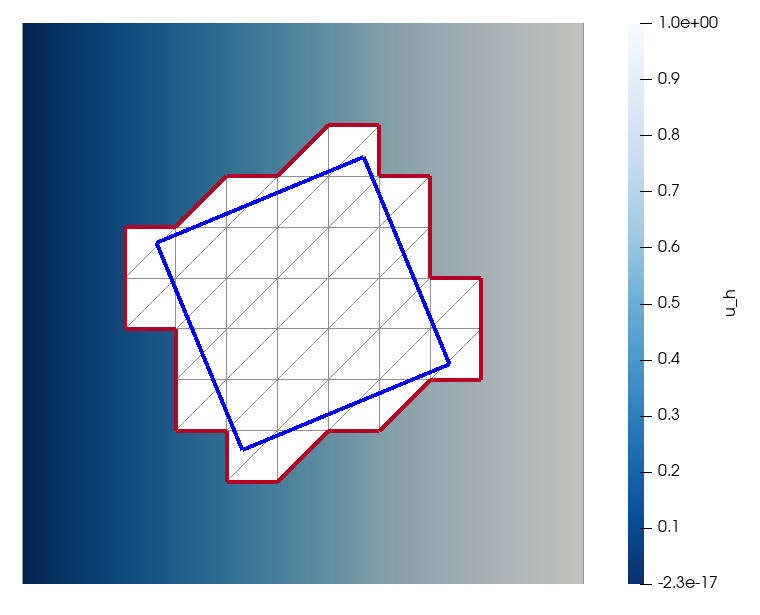}
    \caption{$u = x$}
  \end{subfigure}
  \begin{subfigure}[t]{0.31\textwidth}\centering
    \includegraphics[width=\linewidth]{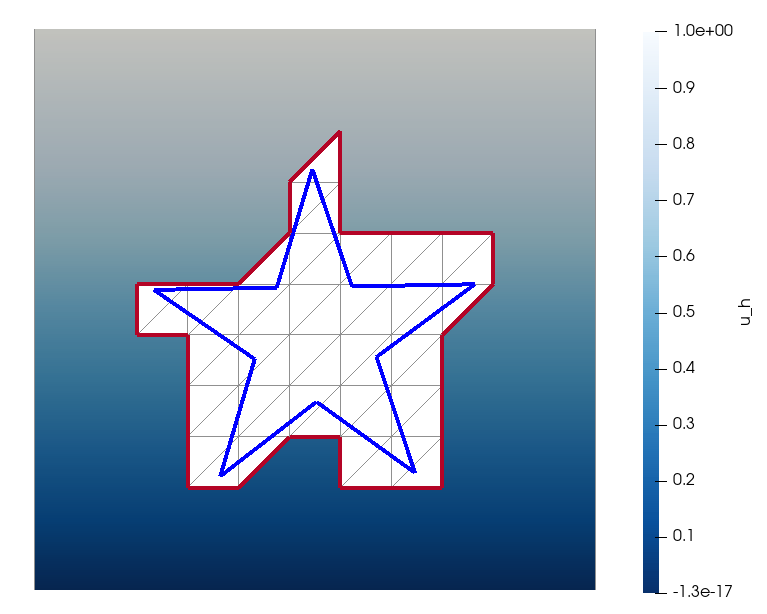}
    \caption{$u = y$}
  \end{subfigure}
    \caption{Patch test solutions $u_h$ on surrogate domains $\tO$. $\G$ (blue) and $\tG$ (red) }
        \label{fig:patch1}
\end{figure}

\begin{figure}[tbh!]
  \centering
  \begin{subfigure}[t]{0.32\textwidth}\centering
    \includegraphics[width=\linewidth]{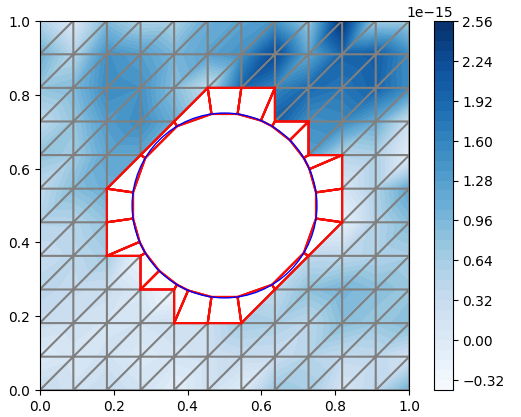}
    \caption{$u_h - u_{exact}$}
  \end{subfigure}
  \begin{subfigure}[t]{0.3\textwidth}\centering
    \includegraphics[width=\linewidth]{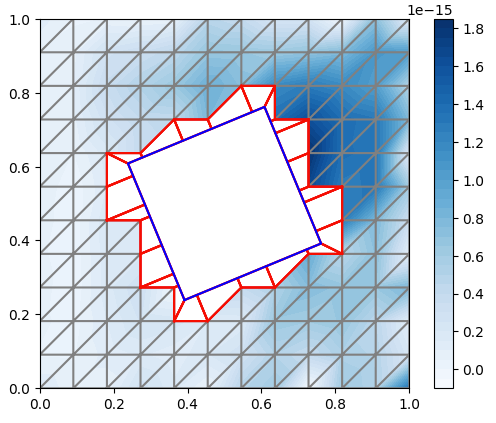}
        \caption{$u_h - u_{exact}$}
  \end{subfigure}
  \begin{subfigure}[t]{0.3\textwidth}\centering
    \includegraphics[width=\linewidth]{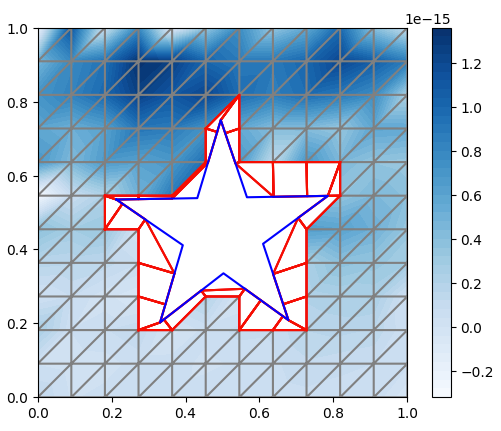}
        \caption{$u_h - u_{exact}$}
  \end{subfigure}
    \caption{Patch test results with $u_h - u_{exact}$ contours. $\G$ (blue) and $T^{ext}$ extensions (red)}
            \label{fig:patch2}
\end{figure}

\subsection{Poisson problem with manufactured solutions}
The aim of these numerical experiments is to analyze convergence properties of the method for a high number of extension arrangements. Rotating the background mesh, while fixing the embedded geometry and manufactured solution, isolates effects imparted on the quality of the solution by the geometry of the element extensions $T^{ext}_{\ti{e}}$. Both a smooth, analytical shape (circle) and a concave, polygonal shape (star) were immersed into triangular background meshes. 
Nine increments of grid rotation (from zero to 45 degrees) and seven levels of grid refinement were applied.
Results from a boundary-fitted, primal formulation are also included for comparison.

The geometric setup of the first test involves a circular boundary of radius 0.25 and centered at [0.6,0.5].
 The computational grids are unstructured triangular meshes rotated around [0.5,0.5]: this offset ensures variability in the cuts for each rotation. 
Neumann boundary conditions are applied along the surrogate boundary associated with the circular shape.
The second test involves a star-shaped polygon (five-point star) centered at [0.5,0.5] and immersed into a structured background triangular mesh.
Neumann boundary conditions are applied on the surrogate boundary along the star shape for $x>0.5$ and Dirichlet conditions for $x \leq 0.5$.
As in the patch test, Dirichlet conditions are enforced strongly on the outer perimeter for both tests. 
The analytical solution and corresponding forcing function are
\begin{align}
  u(x,y) &=\; \sin(4 \pi x) \sin(4 \pi y) \; ,
\\
  f(x,y) &=\; 32\pi^2\sin(4 \pi x) \sin(4 \pi y) \; ,
\end{align}
which were deduced from the strong form of the Poisson equation, using the method of manufactured solutions. 
Boundary conditions are specified accordingly.

For visualization purposes, a sampling of computed solutions from various rotations are included in Figures~\ref{fig:visual} and~\ref{fig:visual2}.
The results displayed in Figures~\ref{fig:H0results} and ~\ref{fig:SPresults} show that the convergence rates of the $L^2$-norm and the $H^1$-seminorm are optimal.
In terms of the condition number $\kappa(A)$ associated with the algebraic problem, we see that the proposed method maintains the expected scaling of $\kappa(A) \sim h^{-2}$.
In Figure \ref{fig:SPresults}, both the symmetric ($\theta = 1$, $\gamma = 10$) and anti-symmetric ($\theta = -1$, $\gamma = 0$) Nitsche formulations are simulated and compared to the primal, boundary-fitted case.
The symmetric Nitsche formulation seems more accurate in the $L^2$-norm of the error, but at the expense of higher condition numbers.

\begin{figure}[!htb]
	\centering

                \caption{Condition Number.}
	\end{subfigure}

        \caption{Convergence rates and condition numbers for the Poisson problem with a star-shaped boundary.}
        \label{fig:SPresults}
\end{figure}

Similar convergence tests with the same manufactured solution were performed using quadrilateral finite elements. 
For boundaries, a square with a side length of $l=0.48$ and centered at $[0.5,0.5]$ and a concave flower-like geometry are considered. The coordinates of the flower-like boundary are parametrized as functions of the angle $-\pi \leq \theta < \pi$:
\begin{equation}
\left\{ \begin{aligned} 
x(\theta) = 0.5 + (0.05+0.24\sin(7\theta)\cos(\theta)) \; , \\
y(\theta) = 0.5 + (0.05+0.24\sin(7\theta)\sin(\theta)) \; .
\end{aligned} \right.
\end{equation}
In both cases, the background grids are fixed in place while the immersed geometries are rotated by 0, 10, 20, 30 and 40 degrees. However, we did not perform rotations of the immersed geometries for quadrilateral body-fitted grids, since the results with the previous triangular body-fitted grids were tightly clustered.

Figure \ref{fig:visualsq} depicts the true and surrogate boundaries and numerical solutions on the active domains. In both cases, embedded Neumann boundary conditions are applied at the inner boundaries and strong Dirichlet boundary conditions are enforced at the outer boundaries. Both the symmetric and anti-symmetric Nitsche formulations are also considered herein. Figures \ref{fig:SqPoissonResults} and \ref{fig:FlPoissonResults} show the convergence rates of the $L^2$- and $H^1$-seminorms, as well as the condition numbers $\kappa(A)$.
It is evident that the results are very similar to the ones obtained with triangular grids.

\begin{figure}[!htb]
	\centering

                \caption{Condition Numbers.}
	\end{subfigure}

        \caption{Convergence rates and condition numbers for the Poisson problem with a flower-like boundary over Cartesian grids.}
        \label{fig:FlPoissonResults}
\end{figure}

\subsection{Linear Elasticity}

We consider a series of tests for the isotropic compressible linear elasticity equations that have similar setup as in the case of the Poisson equation. The circle and star geometry were immersed on triangular grids (unstructured and structured) with a prescribed manufactured solution. The elastic parameters were chosen to be a Young's Modulus $E = 10$ Gpa and a Poisson's ratio of $\nu = 0.3$. Neumann boundary conditions were applied on the embedded inner boundary and Dirichlet conditions were strongly enforced on the outer boundary. As before, the computational grids were incrementally rotated from 0 degrees to 45 degrees for seven levels of grid refinement.
The analytical solution was chosen to be
\begin{equation}
\left\{ \begin{aligned} 
  u_x(x,y) &= \sin(2 \pi x) \sin(2 \pi y) \; , \\
  u_y(x,y) &= \cos(2 \pi x) \cos(2 \pi y) \; .
\end{aligned} \right.
\end{equation}
\begin{figure}[tbh!]
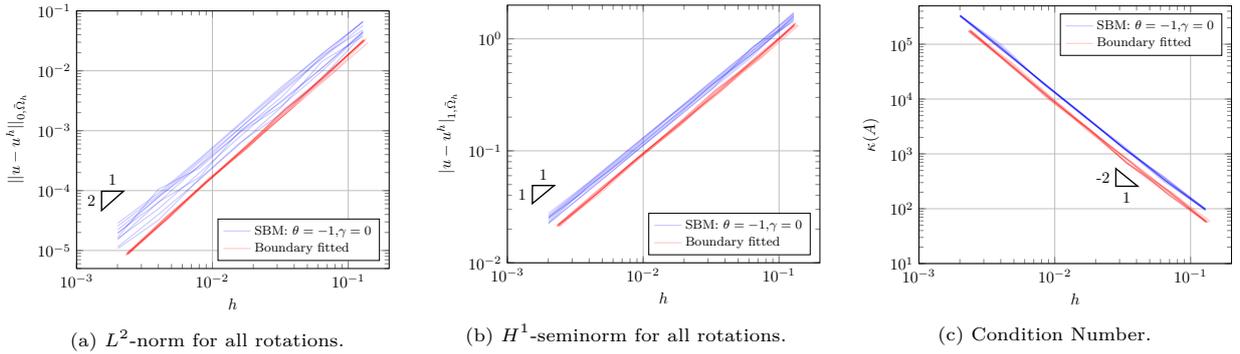

	\begin{subfigure}[tbh!]{0.33\textwidth}

                \caption{Condition Number.}
	\end{subfigure}

        \caption{Convergence rates for the linear elasticity problem with star-shaped boundary and triangular grids.}
        \label{fig:H1results2}
\end{figure}
Figure~\ref{fig:H0results2} and~\ref{fig:H1results2} show that optimal convergence rates are obtained in both the $L^2$-norm and $H^1$-seminorm of the error. 
Also the condition number is well behaved and does not show any small-cut cell pathologies.

Analogously, convergence tests were performed based on quadrilateral finite elements. The same square and flower geometries were considered with a manufactured solution
\begin{equation}
\left\{ \begin{aligned} 
  u_x(x,y) &= \sin(3 \pi x) \sin(3 \pi y) \; , \\
  u_y(x,y) &= \cos(3 \pi x) \cos(3 \pi y) \; .
\end{aligned} \right.
\end{equation}
The material properties are: Young's modulus $E = 2.25$ GPa and Poisson's ratio $\nu = 0.125$.
The boundary conditions are kept the same as in the Poisson experiments, that is Neumann and Dirichlet conditions are applied with the same scheme, although this time they involve vector quantities like displacement and traction rather than scalar quantities like temperature and normal heat flux.
Figure~\ref{fig:SqElasticityResults} and~\ref{fig:FlElasticityResults} show that optimal convergence rates are also obtained with Cartesian grids of quadrilateral finite elements.

\begin{figure}[tbh!]
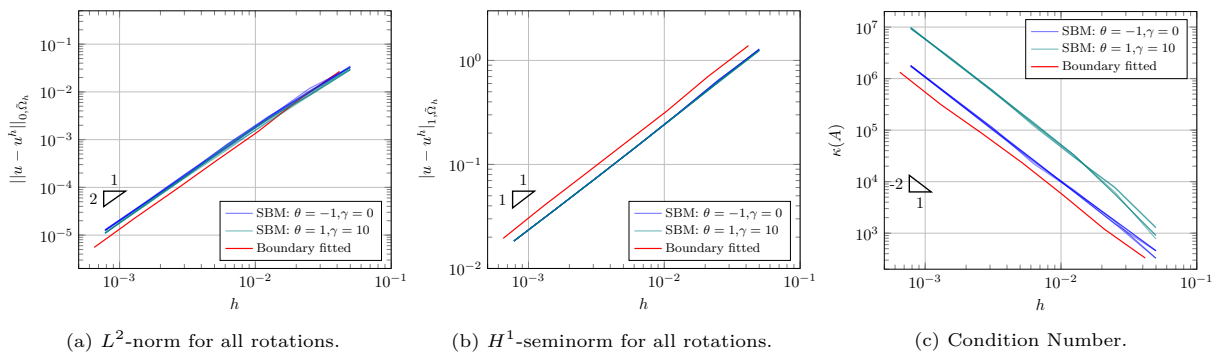

	\centering
	\begin{subfigure}[tbh!]{0.32\textwidth}

                \caption{Condition Number.}
	\end{subfigure}

        \caption{Convergence rates for the linear elasticity problem with flower-like boundary and Cartesian grids.}
        \label{fig:FlElasticityResults}
\end{figure}

\subsection{Cantilever beam}
The cantilever beam test is a classic elastostatics problem consisting of a loaded beam that is clamped on one end subject to a distributed load.
This test is a good candidate for assessing the performance of the proposed variant of SBM in the presence of mixed displacement/traction boundary conditions.
An analytical solution can be derived using the Euler-Bernoulli beam theory.
The beam has length $L=20$ and height $H=1$ and is subject to a uniformly distributed load $q = 1e{-3}$.
A zero displacement Dirichlet boundary condition is applied at $x=0$, with a stress-free Neumann boundary condition applied everywhere else.
The material Young's modulus is $E=1e5$ and the Poisson's ratio is $\nu=0.3$. The analytical solution for the vertical tip displacement is calculated as
\begin{equation}
u_{ymax} = \frac{qL^4}{8EI} \; .
\end{equation}

Simulations were performed with both the embedded (non-symmetric Nitsche) SBM and compared with a boundary-fitted standard primal formulation.

Visualizations of the embedded beam setup and displacement solution are shown in Figure \ref{fig:beam}. The convergence of the solution (largest vertical displacement) for both the SBM and boundary-fitted formulations are provided in Figure \ref{fig:beamsoln}. The Euler-Bernoulli reference solution is 0.24 indicated by the solid black line. The SBM (blue) shows proper convergence to the reference solution with sufficient refinement.
Actually, we observe that the SBM formulation converges faster than the primal body-fitted formulation to the reference solution.

\begin{figure}[tbh!]
  \centering
    \begin{subfigure}[t]{1\textwidth}\centering
    \includegraphics[width=\linewidth]{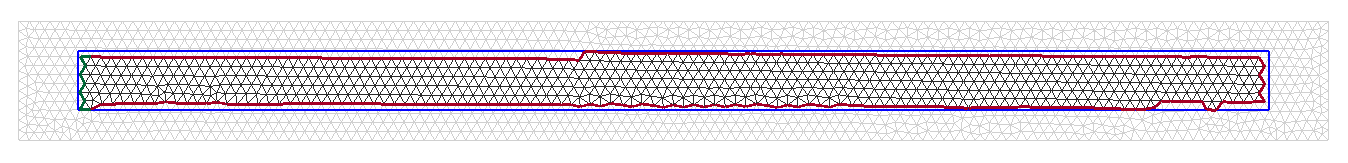}\vspace{3mm}
    \caption{Setup: the true boundary (blue), surrogate Neumann boudnary (red), and surrogate Dirichlet boundary (green).}

  \end{subfigure}\\
  \begin{subfigure}[t]{1\textwidth}\centering
    \includegraphics[width=\linewidth]{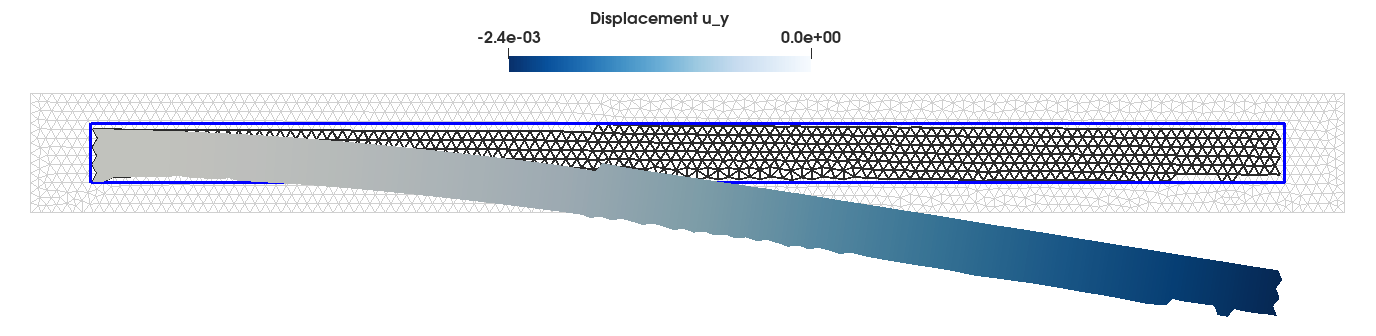}\vspace{3mm}
    \caption{Contour of vertical displacement and deformed configuration (displacement are amplified by $10^3$).}

  \end{subfigure}

\caption{Cantilever beam bending test: Surrogate domain $\tO$ with true $\G$ (blue) and surrogate boundaries $\tG$ (green, red), and deformed configuration.}
\label{fig:beam}

\end{figure}

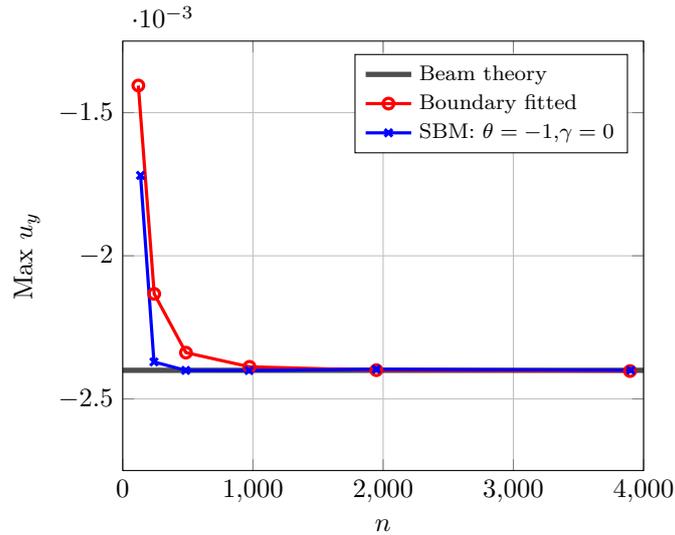
\begin{figure}[tbh!]
  \centering

		\begin{tikzpicture}[scale= 1]
			\begin{axis}[
				grid=major,x post scale=1.0,y post scale =1.0,
				xlabel={$n$},ylabel={Max $u_y$},xmax=4000, xmin=0, ymax=-0.00125, ymin=-0.00275,
				legend style={font=\footnotesize,legend cell align=left},
				legend pos= north east]
                            \addplot[ black, opacity=0.7,line width=2pt] coordinates {
                            (0 , -0.0024 )
                            (4000 , -0.0024 )
                          };
						  \addlegendentry{Beam theory}
                          \addplot[ red, opacity=1,line width=1.2pt,mark=o] coordinates {
                            (120 , -0.001405168522458252 )
                            (242 , -0.0021328711694979237 )
                            (486 , -0.002338261167716788 )
                            (974 , -0.002387720924460818 )
                            (1948 ,-0.002399547929445104 )
                            (3896 , -0.0024025627946877765 )
                          };
						  \addlegendentry{Boundary fitted}
                          \addplot[ blue, opacity=1,line width=1.2pt,mark=x] coordinates {
                            (137 , -0.0017190605631823934 )
                            (240 , -0.0023706685172972257)
                            (484 , -0.0024001597973039284 )
                            (970 , -0.002400702412107534 )
                            (1948 , -0.002396300937510495 )
                            (3902 , -0.0023998906847161904 )
                          };
						  \addlegendentry{SBM: $\theta = -1$,$\gamma = 0$}
			\end{axis}
		\end{tikzpicture}\vspace{3mm}
        \caption{Cantilever beam bending test: tip vertical deflection for the Boundary fitted method (red) and SBM (blue). The reference solution of the Euler Beam Theory is in black. $n$ is the number of boundary segments.}
\label{fig:beamsoln}

\end{figure}

\section{Summary \label{sec:summary} }
We proposed a new conceptualization of the SBM framework, which provides optimal error estimates
in both the $H^1$-norm and $L^2$-norm in the presence of Neumann or Dirichlet boundary conditions. The
proposed approach is based on approximate integration of the variational formulation in the gap between
the surrogate and true boundaries. The proposed approach is still classified as a SBM, since the construction
of the integration procedure on the gap relies on the concept of a distance, and because extensions of the
solution in the form of Taylor expansions are used to evaluate the solution in the gap. Hence, no cut-cell
integration procedure is performed. A series of numerical experiments proved the consistency, stability,
robustness, and optimal accuracy of the proposed approach.

\section*{Acknowledgments}
G. Scovazzi has been partially supported by the National Science Foundation (Division of Mathematical Sciences), with Grant DMS 2207164 and Grant DMS 2409919.

\bibliographystyle{plain}
\bibliography{./SBM_bibliography} 

\end{document}